\documentclass{amsart}
\usepackage{amssymb}
\usepackage{enumitem}
\usepackage{xcolor}
\usepackage{pinlabel}
\usepackage{hyperref}
\usepackage{thm-restate}
\usepackage{fixltx2e}

\newtheorem {theorem}{Theorem}[section]
\newtheorem {lemma} [theorem] {Lemma}
\newtheorem {proposition} [theorem] {Proposition}
\newtheorem {corollary} [theorem] {Corollary}

\newtheorem {claim}{Claim}
\newtheorem* {claim*}{Claim}

\newtheorem* {subclaim*}{Subclaim}

\newtheorem {convention} [theorem] {Convention}

\theoremstyle{definition}
\newtheorem{remark}[theorem]{Remark}
\newtheorem {definition} [theorem] {Definition}
\newtheorem {notation} [theorem] {Notation}
\newtheorem {example} [theorem] {Example}

\newcommand{\mc}{\mathcal}
\newcommand{\from}{\colon\thinspace}
\newcommand{\st}{:}
\newcommand{\Homeo}{\operatorname{Homeo}}

\newcommand{\Hom}{\operatorname{Hom}}
\newcommand{\acts}{\curvearrowright}
\newcommand{\dvis}{d_\partial}
\newcommand{\dgam}{d_{\Gamma}}
\newcommand{\dcusp}{d_{X}}
\newcommand{\Izero}{I \cup \{0\}}

\newcommand{\bgamp}{\partial(\Gamma, \mathcal{P})}
\newcommand{\diam}{\operatorname{diam}}
\newcommand{\Cay}{\operatorname{Cay}}

\newcommand{\idgamma}{\mathit{id}}
\newcommand{\PSL}{\mathrm{PSL}}

\newcommand{\minus}{-}
\newcommand{\bN}{\mathbb{N}}
\newcommand{\bH}{\mathbb{H}}
\newcommand{\bR}{\mathbb{R}}

\newcommand{\depth}{D_X}
\newcommand{\coding}[1]{{\mathbf{#1}}}

\newcommand{\lab}{\operatorname{\mathbf{Lab}}}
\newcommand{\head}{\iota}
\newcommand{\tail}{\tau}

\newcommand{\semiconj}{\mathcal{R}}
\newcommand{\bigepsilon}{\varepsilon}
\newcommand{\smallepsilon}{\varepsilon'}


\newcounter{notes}
%



\newcommand{\putinbox}[1]{\noindent\fbox{\parbox{.98\textwidth}{#1}}}

\title[Topological stability of relatively hyperbolic groups]{Topological stability of relatively hyperbolic groups acting on their boundaries}
\author[K. Mann]{Kathryn Mann}
\address{Department of Mathematics, Cornell University, Ithaca, NY 14853, USA
}
\email{k.mann@cornell.edu}

\author[J.F. Manning]{Jason Fox Manning}
\address{Department of Mathematics, Cornell University, Ithaca, NY 14853, USA
}
\email{jfmanning@cornell.edu}

\author[T. Weisman]{Theodore Weisman}
\address{Department of Mathematics, University of Michigan, Ann Arbor,
  MI 48109, USA
}
\email{tjwei@umich.edu}

\begin{document}

\begin{abstract}
We prove a topological stability result for the actions of relatively hyperbolic groups on their Bowditch boundaries.  More precisely, we show that a sufficiently small perturbation of the standard boundary action, if assumed on each parabolic subgroup to be a perturbation by semi-conjugacy, is in fact always globally semi-conjugate to the standard action.  This proves a relative version of the main result of \cite{MMW1}.  The assumption of control on the perturbation of parabolics is necessary.
\end{abstract}

\maketitle

\section{Introduction} 
An action of a group $G$ on a topological space $X$ is said to be {\em topologically} or {\em $C^0$ stable} if small perturbations of the action ``retain the same complexity of dynamical information".  Formally, an action $\rho_0: G \to \Homeo(X)$ is {\em $C^0$ stable} if there exists a neighborhood $U$ of $\rho_0$ in the compact-open topology on $\Hom(G, \Homeo(X))$ so that every $\rho \in U$ is related to $\rho_0$ by a continuous, surjective map $h: X \to X$ satisfying $h \circ \rho = \rho_0 \circ h$.  This map $h$ is called a {\em semi-conjugacy}, and $\rho_0$ is said to be a {\em (topological) factor} of $\rho$.  
A frequent theme in dynamics is that, at least for sufficiently rich actions, some control on the differentiability or regularity of the action $\rho$ can force any such map $h$ to be invertible (so a genuine conjugacy), while in the purely $C^0$ case a semi-conjugacy is the best one can hope for.  

This paper treats the question of stability of certain boundary actions of groups.  The general study of stability of boundary actions  has a long history.  Sullivan \cite{Sullivan} proved that the action of a Kleinian group on its limit set is stable in the sense of $C^1$ dynamics, meaning that $C^1$-close actions remain conjugate.  This is one instance of the theme of control on regularity described above.  Sullivan's work was generalized by Kapovich--Kim--Lee \cite{KKL} to the broader class of what they call ``meandering hyperbolic actions" under Lipschitz--close perturbations, examples of which include the boundary actions of hyperbolic groups and of uniform lattices.   

Topological stability of hyperbolic groups acting on their boundaries was shown in \cite{BowdenMann, MannManning, MMW1}; see also \cite{GromovEssay}.  The approach from \cite{BowdenMann} was very recently adapted (with much additional work) to establish topological stability for actions of uniform lattices on Furstenburg boundaries in \cite{CINS}.   The current paper treats the case of {\em relatively hyperbolic} group pairs.  Such a pair has a canonical action on a compact metrizable space called the \emph{Bowditch boundary}, whose definition is recalled in Section~\ref{sec:setup} below.  
We prove the following. 

\begin{restatable}[$C^0$ stability for relatively hyperbolic groups]{theorem}{mainthm}\label{thm:main} 
  Let $\Gamma$ be hyperbolic relative to $\mathcal{P}$ and $\rho_0$
  the natural action of $\Gamma$ on the Bowditch boundary $\bgamp$.

  For any neighborhood $\mathcal{V}$ of the identity in $C(\bgamp)$,
  there exists a neighborhood $\mathcal{U}$ of $\rho_0$ in
  $\Hom(\Gamma, \Homeo(\bgamp))$ and a neighborhood $\mathcal{V'}$ of
  the identity in $C(\bgamp)$ such that, if $\rho \in \mathcal{U}$ is
  an action whose restriction to each $P \in \mathcal{P}$ 
  has $\rho_0|_P$ as a topological factor, related by a semi-conjugacy in $\mathcal{V'}$, then
  $\rho$ has $\rho_0$ as a factor via a semi-conjugacy in
  $\mathcal{V}$.
\end{restatable}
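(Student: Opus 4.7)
The plan is to adapt the boundary-dynamics construction of the semi-conjugacy from \cite{MMW1} (the hyperbolic case) to the relatively hyperbolic setting, where the assumed semi-conjugacies on parabolic subgroups provide the essential new data near parabolic points of the Bowditch boundary. First, I would work in the Groves--Manning cusped space $X$ of $(\Gamma, \mathcal{P})$: a proper Gromov-hyperbolic space on which $\Gamma$ acts geometrically, built from the Cayley graph by attaching combinatorial horoballs at cosets of parabolic subgroups. The Bowditch boundary $\bgamp$ is a $\Gamma$-equivariant quotient of the Gromov boundary of $X$, and its points split into conical limit points, on which $\Gamma$ has strong contracting dynamics, and bounded parabolic points, each fixed by some $P \in \mathcal{P}$ up to conjugacy, with only finitely many $\Gamma$-orbits.

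On the conical part of $\bgamp$, I would define $h$ by a shadowing argument in the spirit of \cite{MMW1}: every conical point $\xi$ admits a contracting sequence $\gamma_n \in \Gamma$ which focuses a fixed reference open set onto $\xi$ under $\rho_0$. The closeness of $\rho$ to $\rho_0$, combined with Gromov hyperbolicity of $X$ and the convergence-action property, should let us identify a well-defined point in $\bgamp$ to map via $h$ under the $\rho$-dynamics so that the equivariance $h\rho(\gamma) = \rho_0(\gamma)h$ holds on a dense conical set. This step uses only the assumption that $\rho \in \mathcal{U}$. Near each parabolic point $p_P$, the hypothesis supplies a semi-conjugacy $h_P \in \mathcal{V}'$ between $\rho|_P$ and $\rho_0|_P$; I would use $h_P$ to define $h$ on a suitable $P$-invariant horospherical neighborhood of $p_P$, transport by $\Gamma$ to define $h$ near every parabolic point, and then try to show the result extends the dynamically-defined conical $h$ to a continuous map.

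The main obstacle is the gluing of the two constructions. When a conical point $\xi$ lies in a small horospherical neighborhood of $p_P$, the shadowing value of $h(\xi)$ must match $h_P(\xi)$. The key is that both definitions are $P$-equivariant in this region, so matching them on a single $P$-orbit deep in the shadow of the horoball --- where cusped-space geometry gives strong control on the interaction of $\rho$ with $\rho_0$ --- should force agreement on the full $P$-orbit, and then by continuity on its closure. Compactness of $\bgamp$ and finiteness of $\Gamma$-orbits of parabolic points mean there are only finitely many such gluings to arrange. Once continuity of the global $h$ is established, surjectivity follows from a standard density argument, $\Gamma$-equivariance is built in, and quantitative closeness of $h$ to the identity in $C(\bgamp)$ is propagated from the sizes of $\mathcal{U}$ and $\mathcal{V}'$, yielding membership in $\mathcal{V}$.
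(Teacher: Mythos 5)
The high-level idea (cusped space, shadowing, use the assumed parabolic semi-conjugacies, push equivariance) is in the right spirit, and the paper does ultimately define the semi-conjugacy $\phi$ by a nested-intersection construction of the kind you gesture at. But your outline contains a genuine and load-bearing error that aligns exactly with the hardest part of the paper.

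You assert that the conical shadowing step ``uses only the assumption that $\rho \in \mathcal{U}$.'' This is false, and it is precisely the point at which the hyperbolic argument of \cite{MMW1} breaks down. In the relatively hyperbolic setting, a nested contracting sequence $\gamma_n = \alpha_1\cdots\alpha_n$ targeting a conical point $\xi$ necessarily tracks a geodesic in the cusped space, and such a geodesic can make excursions arbitrarily deep into horoballs. The corresponding increments $\alpha_k$ then come from an \emph{infinite} label set (cosets of parabolic subgroups), not a fixed finite set. Closeness of $\rho$ to $\rho_0$ in $\Hom(\Gamma,\Homeo(\bgamp))$ only controls the action of a finite generating set, so it gives no control whatever over $\rho(\alpha_k)$ when $|\alpha_k|$ is large. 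Consequently the nesting $\rho(\gamma_{n+1})\overline{W_{n+1}} \subset \rho(\gamma_n)W_n$ that makes the shadowing converge cannot be verified from $\rho\in\mathcal{U}$ alone — one needs the parabolic semi-conjugacy hypothesis at exactly this step. This is not a boundary-effect problem to be fixed by gluing near parabolic fixed points: it affects the definition of $\Phi(\xi)$ for \emph{conical} $\xi$ whose geodesic passes near a parabolic, which is a dense set of conical points. The paper's Section~\ref{sec:uniform_nest} (Proposition~\ref{prop:unif_nest}, ``uniform nesting'') is devoted to this: every pair of codings of a common conical point falls into one of two regimes, ``short words'' where finitely many open conditions suffice, or ``long parabolics'' where one must invoke the sets $\phi_p^{-1}(V(z))$ coming from the assumed parabolic semi-conjugacies. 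Your two-construction-then-glue plan has no mechanism for this dichotomy.

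A secondary issue: you hope to ``identify a well-defined point'' $h(\xi)$ for each $\xi$. The object the construction actually produces is a closed \emph{set} $\Phi(\xi)$ (the nested intersection under $\rho$ can have nonempty interior), and the semi-conjugacy $\phi$ is then defined by $\phi(x)=\xi$ for $x\in\Phi(\xi)$. Well-definedness requires showing the $\Phi(\xi)$ partition $\bgamp$ (Lemma~\ref{lem:inverse_disjoint}), and continuity is not ``a standard density argument'' but follows from the diameter bound $\diam W(z)<\bigepsilon$ together with equivariance and the convergence group property (Lemma~\ref{lem:continuity}). You should replace the blind-shadowing step with an explicit automaton coding of $\bgamp$ (conical and parabolic vertices treated in one structure, parabolic vertices having infinite out-degree), prove a uniform nesting lemma handling long parabolic labels via the assumed $\phi_p$, and only then define $\Phi$ and verify it is a well-defined partition.
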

As is standard, for a topological space $X$ (in our case $X = \bgamp$), the notation $C(X)$ refers to the space of continuous maps $X \to X$, equipped with the compact-open topology; and $\Hom(\Gamma, G)$ is the space of homomorphisms from $\Gamma$ to the topological group $G$ (in our case $G=\Homeo(\bgamp))$, also equipped with the compact-open topology.  

\subsection{Examples} 
The following examples show that control on the restriction of $\rho$ to each subgroup
$P \in \mathcal{P}$ is a necessary hypothesis.
In fact it is necessary even when $\Gamma$ is a lattice in a Lie group $G$, and the deformed action $\rho$ is induced by deforming the inclusion representation in the character variety.
\begin{example} 
  Consider a finite-area hyperbolic surface with a single cusp,
  isometric to a quotient $\bH^2 / \Gamma$ for a discrete subgroup
  $\Gamma \subset \PSL(2, \bR)$. Then $\Gamma$ is a relatively
  hyperbolic group, relative to the fundamental group of the cusp, and
  the action of this group on its Bowditch boundary is the action of
  $\Gamma$ on $\bR \mathrm{P}^1 = \partial \bH^2$.
  \begin{enumerate}[label=(\roman*)]
  \item Let $\rho:\Gamma \to \PSL(2, \bR)$ be a small deformation of
    the inclusion representation
    $\rho_0:\Gamma \hookrightarrow \PSL(2, \bR)$ for which a generator
    of the cusp group acts by an elliptic transformation. As elliptic
    transformations have no fixed points in $\partial \bH^2$, this
    deformed action cannot be semi-conjugate to the original boundary
    action.
  \item\label{item:h2_loxodromic_example} Now let
    $\rho:\Gamma \to \PSL(2, \bR)$ be a deformation of $\rho_0$ where
    the cusp group instead acts by a loxodromic transformation, which
    fixes a pair of points in $\partial \bH^2$ close to the parabolic
    fixed point for the original cusp group action. The map
    $\partial \bH^2 \to \partial \bH^2$ which collapses the small arc
    $A$ joining this pair of fixed points is a semi-conjugacy for the
    $\rho$-action of the cusp group, which is close to the
    identity. Additionally collapsing all of the arcs in the
    $\rho(\Gamma)$-orbit of $A$ yields a semi-conjugacy for the
    $\rho$-action of the whole group $\Gamma$. This is an example of the situation
    described by Theorem~\ref{thm:main}.
  \end{enumerate}
\end{example}

It is also possible to generalize both of the examples above to
higher dimensions.
\begin{example} 
  Suppose that $M$ is a finite-volume non-compact hyperbolic
  $n$-manifold, isometric to $\bH^n/\Gamma$ for a discrete group
  $\Gamma \subset \mathrm{Isom}(\bH^n) \simeq \mathrm{PO}(n, 1)$; as
  in the previous example, $\Gamma$ is then relatively hyperbolic,
  relative to its collection of cusp subgroups, and the action on its
  Bowditch boundary is the induced action on
  $\partial \bH^n \simeq S^{n-1}$.
  \begin{enumerate}[label=(\roman*)]
  \item When $n = 3$, Thurston's hyperbolic Dehn filling theorem
    implies that there are arbitrarily small deformations of the
    inclusion $\rho_0:\Gamma \hookrightarrow \mathrm{PO}(n,1)$ for
    which the action on $\bH^3$ (and hence the induced action on
    $\partial \bH^3$) has infinite kernel. These deformations cannot
    be semi-conjugate to the original action, where the kernel is
    trivial.
  \item 
    For each $n\ge 2$, there are examples of (non-uniform) lattices
    $\Gamma \subset \mathrm{PO}(n,1)$ which have deformations in
    $\Hom(\Gamma, \mathrm{PGL}(n+1, \bR))$ which are discrete,
    faithful, and preserve a convex open subset
    $\Omega \subset \bR \mathrm{P}^n$, arbitrarily close (but not
    equivalent) to the projective model for $\bH^n$; in some of these
    examples, the cusp groups preserve small $k$-simplices embedded in
    $\partial \Omega$, for some $1 \le k < n$ (see \cite{BDL18},
    \cite{BallasMarquis20}, \cite{Bobb19}). The induced action of
    $\Gamma$ on $\partial \Omega \simeq S^{n-1}$ then cannot be
    conjugate to the original action of $\Gamma$ on $\partial
    \bH^n$. However, it follows from \cite[Section 5]{Weisman23} that
    (for sufficiently small deformations) the map collapsing all of
    the $k$-simplices is a semi-conjugacy to the standard action of
    $\Gamma$ on its Bowditch boundary.
  \end{enumerate}
\end{example}

We also note that, in the broad context of $C^0$ deformations covered
by Theorem~\ref{thm:main}, 
it is possible to have arbitrarily small deformations $\rho$ of $\rho_0$ such that $\rho$ and $\rho_0$ are not conjugate (rather, they are only semi-conjugate), but such that their restrictions
to each peripheral subgroup are genuinely {\em conjugate}.  
In \cite[Section
4]{BowdenMann} and \cite[Example 1.4]{MMW1}, examples of small non-conjugate deformations 
are constructed in the case where $\mc{P}= \emptyset$; these can be generalized to the relatively hyperbolic setting.  

\subsection*{Outline}
The broad strategy of this work follows that of \cite{MMW1}, but much additional technical work is needed in the presence of parabolic elements.  We adapt and use tools and ideas from \cite{Weisman} and the geometry of relatively hyperbolic groups.  A reader looking for a gentler introduction to rigidity of boundary actions may wish to first read the simpler proof in \cite{MMW1}.

In Section \ref{sec:setup} we review necessary material on relatively hyperbolic groups and set the stage for the proof.  In Section \ref{sec:automaton} we define an automaton which codes boundary points, adapted to the setting of the desired stability theorem for relatively hyperbolic groups.  Essential properties of this automaton are proved in Section \ref{sec:automaton_prop}

Section \ref{sec:uniform_nest} is the technical heart of the paper.
The key proposition (Proposition \ref{prop:unif_nest}) is a ``uniform
nesting" condition for sequences of nested sets furnished by the
automaton, which can be translated into a stable condition under
perturbation that allows us to define the desired semi-conjugacy
between the standard boundary action and a sufficiently small
perturbation.  Section \ref{sec:proof} uses all the previous work to
define this semi-conjugacy and conclude the proof.

\subsection*{Acknowledgments}
KM was partially supported by NSF grant DMS 1844516 and a Sloan
fellowship.  JM was partially supported by the Simons Foundation, grant number 942496.  TW was partially supported by NSF grant
DMS-2202770.

\section{Set-up} \label{sec:setup} We assume the reader has basic
familiarity with the theory of relatively hyperbolic groups; general
background can be found in \cite{Bowditch12}, or \cite[Section 2]{GM08} and references therein. 
In this section we set
notation and recall the essential properties that we will use.

Let $\Gamma$ be a relatively hyperbolic group, relative to a finite
collection $\mathcal{P}$  of infinite subgroups. 
Fix a finite, symmetric generating set $\mathcal{S}$ for $\Gamma$. We let
$\Cay(\Gamma) = \Cay(\Gamma, \mathcal{S})$ denote the Cayley graph of
$\Gamma$ with respect to the generating set $\mathcal{S}$, and we let
$\dgam$ denote the metric on $\Cay(\Gamma)$ induced by this
generating set.  We also assume the generating set $\mathcal{S}$ is
compatible with $\mathcal{P}$, i.e. that for every parabolic subgroup
$P \in \mathcal{P}$, the intersection $P \cap \mathcal{S}$ is a
generating set for $P$. If $\Gamma$ is hyperbolic (i.e.
$\mathcal{P} = \emptyset$) then Theorem~\ref{thm:main} follows from
\cite{MMW1}, so we assume for the duration of this work that
$\mathcal{P} \neq \emptyset$. We also assume that
$(\Gamma, \mathcal{P})$ is non-elementary (meaning that
$\mathcal{P} \neq \{\Gamma\}$ and $\Gamma$ is not finite or virtually
cyclic), since in these cases the Bowditch boundary $\bgamp$ contains
at most two points and the theorem is trivial.

The Bowditch boundary $\bgamp$ can be identified with the Gromov boundary of a hyperbolic space $X = X(\Gamma, \mc{P}, \mathcal{S})$ which was defined in \cite{GM08} and called a \emph{cusped space} for the pair $(\Gamma,\mc{P})$.  The space $X$ is a locally finite metric graph, with each edge having length $1$.  We use $\dcusp$ to refer to the metric on $X$.  The group $\Gamma$ acts properly by isometries on $X$, and 
the Cayley graph $\Cay(\Gamma,\mc{S})$ embeds properly and $\Gamma$-equivariantly (though not quasi-isometrically) as a subgraph of $X$.

We will need to use some results on the geometry of horoballs in $X$ and
geodesics near horoballs, especially in
Section~\ref{sec:uniform_nest}.  To this end, define the \emph{depth}
$\depth$ of a vertex of $X$ to be its distance from the Cayley graph
in $X$ and extend linearly across edges to get a continuous function
$\depth\from X \to [0,\infty)$.  Thus, the Cayley graph in $X$ is the
set $\depth^{-1}(0)$; a \emph{horoball} is the smallest full subgraph
containing the closure of a component of $\depth^{-1}(0,\infty)$.  For
an integer $k > 0$, we refer to a component of $\depth^{-1}[k,\infty)$
as a \emph{$k$--horoball}. If $\mc{H}$ is a horoball, we say that a
geodesic $\gamma\from[a,b] \to \mc{H}$ is \emph{regular} if there are
$a \le A < B \le b$ with $B - A \le 3$ and
\begin{equation*}
  \frac{d}{dt}\depth\circ\gamma(t) =
  \begin{cases}
    1 & t<A\\
    0 & A<t<B\\
    -1 & t>B
  \end{cases}.\end{equation*}
A geodesic in a horoball is \emph{vertical} if it is regular and
either $a=A=B$ or $A=B=b$.  We will say a geodesic in $X$ is
\emph{regular} if every intersection with a horoball is
regular. Whenever necessary, we will assume our geodesics are regular.  We also fix an integer $\delta \ge 1$ so that the cusped space $X$ is $\delta$--hyperbolic (in the sense that all geodesic triangles are $\delta$--thin).

The following result was proved in \cite{GM08}.
\begin{lemma}\cite[Lemmas 3.10, 3.26]{GM08}\label{lem:gmlemma}
  For any $k \ge \delta + 1$, the $k$--horoballs are convex.
  Moreover, any geodesic in a horoball is Hausdorff distance at most
  $4$ from a regular unit speed geodesic $\gamma\from [a,b]\to X$ with
  the same endpoints.
\end{lemma}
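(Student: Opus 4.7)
I would split the lemma into its two parts: (a) convexity of $k$--horoballs for $k \ge \delta + 1$, and (b) Hausdorff closeness, within $4$, of any horoball geodesic to a regular one.

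Part (b) is internal to a single combinatorial horoball $\mc{H}$. I would first derive a distance formula by direct optimization: since a horizontal edge at depth $\ell$ bridges $P$--distance up to $2^\ell$, ascending one level at vertical cost $1$ halves the available horizontal cost. Balancing these, the cheapest path from $(p,n)$ to $(q,m)$ has U-shape: ascend to a turning depth $M \approx \max\{n, m, \lceil \log_2 d_P(p,q) \rceil\}$, take at most three horizontal steps at depth $M$, then descend. A short exchange argument --- replacing any horizontal step at depth $\ell$ by an up/over/down detour at depth $\ell+1$, and the reverse --- shows that any horoball geodesic has exactly one peak, with slope $\pm 1$ on the two flanks. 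Fitting an arbitrary geodesic to this template yields Hausdorff distance at most $4$ to a regular geodesic with the same endpoints; the numerical constant reflects the three--step horizontal cap at the top plus one vertical edge of slack.

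For Part (a), given $x, y$ at depth $\ge k$ in a horoball $\mc{H}$, let $\gamma'$ be a regular in-horoball path between them: by the U-shape description from Part (b), $\depth(\gamma') \ge \min(\depth(x), \depth(y)) \ge k$. Let $\gamma$ be any $X$--geodesic from $x$ to $y$. Viewing the pair $(\gamma, \gamma')$ as a degenerate triangle and using $\delta$--thinness, $\gamma$ lies within $\delta$ of $\gamma'$, so $\depth(\gamma) \ge k - \delta$ by $1$--Lipschitzness of $\depth$. To sharpen this to $\depth(\gamma) \ge k$ I would run a local exchange: if $\gamma$ left the $k$--horoball at a point $u$ and re-entered at $v$ (both at depth $\le k - 1$), then the $\gamma$--subsegment between them could be replaced by a regular in-horoball detour of strictly smaller length, using the three--step horizontal cap from Part (b) to control the replacement and the hypothesis $k \ge \delta + 1$ to guarantee the saving is strictly positive. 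This contradicts $\gamma$ being a geodesic, so $\gamma$ stays in the $k$--horoball.

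The main obstacle is closing the one-step gap between the naive thin-bigon bound $\depth(\gamma) \ge k - \delta$ and the claimed $\depth(\gamma) \ge k$: soft hyperbolicity alone does not suffice, and the exact bound $k \ge \delta + 1$ must come from the combinatorial template of Part (b). A secondary subtlety is invoking bounded coset penetration for $(\Gamma, \mc{P})$, so that Cayley--graph shortcuts between horoball points are never cheaper than in-horoball regular paths and therefore cannot appear as detours of $\gamma$.
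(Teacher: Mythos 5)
The paper does not prove this lemma at all: it is stated as an import from \cite[Lemmas 3.10 and 3.26]{GM08} and used as a black box. So there is no ``paper's own proof'' to compare against; I can only evaluate your sketch on its own terms and relative to the cited source.

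Your Part~(b) sketch is in the spirit of the combinatorial analysis in \cite[Lemma 3.10]{GM08} and is plausible, though the exchange argument would need more care at the turning depth (one must check the horizontal cap of $\le 3$ and verify the U-shape is forced, not merely available). The real problem is Part~(a). First, your thin-bigon step is circular: to apply $\delta$--thinness of the bigon $(\gamma,\gamma')$ you need $\gamma'$ to be a geodesic \emph{in $X$}, but $\gamma'$ is only known to be a geodesic in the horoball; whether horoball geodesics between deep points are $X$--geodesics is essentially the convexity statement you are trying to prove. Without an independent argument that the horoball is (quasi-)isometrically embedded at depth $\ge k$, the bound $\depth(\gamma)\ge k-\delta$ does not follow. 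Second, even granting that bound, your proposed local exchange does not obviously save length when the excursion below depth $k$ is shallow: if $\gamma$ leaves depth $k$ and returns after only a few edges, the in-horoball detour you would substitute has comparable length, so no contradiction with geodesicity arises. You need a structural argument specific to the cusped space (as in \cite{GM08}) rather than a generic exchange. Third, appealing to bounded coset penetration here is backwards for this framework: in \cite{GM08} the hyperbolicity and horoball geometry of $X$ are established first, and BCP-type statements are downstream consequences, so invoking BCP to rule out Cayley-graph shortcuts introduces a second circularity. The correct route is the direct combinatorial analysis of \cite[\S 3]{GM08}, not a soft thin-triangles argument.
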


We also need one more lemma about the geometry of geodesics which pass
through horoballs.
\begin{lemma}\label{lem:quickapproach}
  Let $\mc{H}$ be a $k$-horoball of $X$ for some $k \ge 0$, let
  $\sigma\from [a,b]\to X$ be a regular unit speed geodesic, and let
  $[a',b']$ be a connected component of $\sigma^{-1}(\mc{H})$.  Assume
  that $b'-a' \ge (4\delta + 3)$.  Then for $t\in [a,a']$ we have
  \[ (a'-t) - \delta \le d_X(\sigma(t),\mc{H}) \le a'-t. \]
\end{lemma}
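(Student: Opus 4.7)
The upper bound is immediate: since $\sigma(a') \in \mc{H}$, we have $d_X(\sigma(t), \mc{H}) \le d_X(\sigma(t), \sigma(a')) = a'-t$.

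For the lower bound, fix an arbitrary $q \in \mc{H}$; the goal is $d_X(\sigma(t), q) \ge (a'-t) - \delta$. My plan is to apply the $\delta$-thin triangle condition to the geodesic triangle with vertices $p := \sigma(t)$, $q$, and $\sigma(b')$. Since $\sigma(a')$ lies on the side $\sigma|_{[t,b']}$ at distance $a'-t$ from $p$, slimness places $\sigma(a')$ within distance $\delta$ of some point on $[p, q] \cup [q, \sigma(b')]$. In the first sub-case, the triangle inequality immediately gives $d_X(p,q) \ge (a'-t) - \delta$.

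The nontrivial case is when $\sigma(a')$ is $\delta$-close to some $y \in [q, \sigma(b')]$. Here I would invoke Lemma~\ref{lem:gmlemma}: after passing to a convex sub-horoball when $k < \delta + 1$, we may assume $\mc{H}$ is convex, so $[q, \sigma(b')] \subset \mc{H}$, and we may replace this geodesic by its regular cousin (Hausdorff distance at most $4$). Since $\depth$ is $1$-Lipschitz and $\depth(\sigma(a')) = k$, we have $\depth(y) \le k + \delta$. The up-flat-down structure of regular geodesics in horoballs then forces the arclength from $y$ to the nearer endpoint of $[q, \sigma(b')]$ to be at most $\delta$. The hypothesis $b' - a' \ge 4\delta + 3$ rules out this endpoint being $\sigma(b')$: otherwise, after accounting for the Hausdorff-$4$ approximation, we would get $d_X(\sigma(a'), \sigma(b')) \le 2\delta + O(1) < b' - a'$. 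So $y$ is within $\delta$ of $q$, and combining this with the $\delta$-thin condition at the vertex $\sigma(b')$ --- equivalently, tracking the Gromov product $\gprod{p}{q}{\sigma(b')}$ against $b'-a'$ --- yields $d_X(p, q) \ge (a'-t) - \delta$.

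The main obstacle is extracting the claimed constant $\delta$ rather than the $2\delta$ that the naive chain
\[
  d_X(p,q) \ge d_X(p, \sigma(a')) - d_X(\sigma(a'), y) - d_X(y, q) \ge (a'-t) - 2\delta
\]
would produce. Recovering the sharp constant requires pairing the depth analysis with the Gromov-product geometry at $\sigma(b')$, or equivalently applying the four-point condition to $\{p, q, \sigma(a'), \sigma(b')\}$: once one knows $\gprod{p}{q}{\sigma(b')} \ge b'-a'$, the two ``triangle-inequality steps'' in the naive chain are not independent, and a single $\delta$ suffices to control both simultaneously.
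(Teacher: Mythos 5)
Your upper bound matches the paper's (both trivial), but your lower-bound argument departs substantially from the paper's and, as you yourself flag, does not actually close the gap.

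The paper does not work with the triangle on $\sigma(t)$, $q$, $\sigma(b')$. Instead it takes $x$ to be the \emph{closest} point of $\mc{H}$ to $\sigma(t)$, extends the geodesic $[\sigma(t),x]$ by a vertical segment to a point $x'$ at depth $k+2\delta$ (this extension $\tau$ is still a geodesic, since any path from $\sigma(t)$ to depth $k+2\delta$ must first reach $\partial\mc{H}$, where $x$ is closest, and then descend at least $2\delta$), and uses $\sigma(a'+2\delta)$ --- not $\sigma(b')$ --- as the third vertex. The hypothesis $b'-a'\ge 4\delta+3$ plus regularity guarantees $\sigma(a'+2\delta)$ sits at depth $k+2\delta$. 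Now the third side joins two points of the $(k+2\delta)$-horoball $\mc{H}'$, which \emph{is} convex (since $k+2\delta\ge\delta+1$), so the entire third side lies at depth $\ge k+2\delta$ and is therefore at distance $\ge 2\delta>\delta$ from $y=\sigma(a')$. Thinness then forces the tripod match $y'$ of $y$ to lie on $\tau$, with $d_X(\sigma(t),y')=a'-t$ exactly. Either $y'$ precedes $x$ on $\tau$, and $d_X(\sigma(t),x)\ge a'-t$, or $y'$ is on the vertical part of $\tau$ past $x$ at depth at most $k+\delta$ (since $d_X(y,y')\le\delta$ and $\depth$ is $1$-Lipschitz), whence $d_X(x,y')\le\delta$ and $d_X(\sigma(t),x)=d_X(\sigma(t),y')-d_X(x,y')\ge(a'-t)-\delta$. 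The whole point of the construction is that the two quantities you would otherwise have to subtract separately (the $\delta$ of thinness and the displacement of $y'$ past $x$) are not independent: the displacement of $y'$ into the horoball is \emph{itself} bounded by the thinness $\delta$, so only one $\delta$ is lost.

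Concretely, your argument has two real problems. First, the step ``after passing to a convex sub-horoball when $k<\delta+1$, we may assume $\mc{H}$ is convex, so $[q,\sigma(b')]\subset\mc{H}$'' does not work: $q$ and $\sigma(b')$ may both lie on $\partial\mc{H}$ at depth exactly $k$, in which case they are not in any strictly deeper sub-horoball, and the geodesic between them can exit $\mc{H}$ when $\mc{H}$ is not itself convex. Second, and more fundamentally, your sub-case where $y\in[q,\sigma(b')]$ is $\delta$-close to $\sigma(a')$ never actually reaches the conclusion. If one works out what the tripod gives with your choice of triangle, writing $c_p=\gprod{q}{\sigma(b')}{\sigma(t)}$, $c_q=\gprod{\sigma(t)}{\sigma(b')}{q}$, one gets $d_X(\sigma(t),q)=c_p+c_q=2c_p+d_X(q,y)-(a'-t)$; turning this into $\ge(a'-t)-\delta$ would require $2c_p+d_X(q,y)\ge 2(a'-t)-\delta$, which is not implied by the information you have (your case assumption is $c_p<a'-t$, and bounding $d_X(q,y)$ from above, as your depth analysis does, only makes things worse). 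The gesture toward the four-point condition is not a proof, and the sharp $\delta$ does not come out of this triangle. The paper's vertical-extension trick is exactly what makes the constant clean; without it, you are stuck at $(a'-t)-2\delta$.
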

\begin{proof}
  Let $x$ be a closest point in $\mc{H}$ to $\sigma(t)$, and let
  $y = \sigma(a')$.  Any geodesic from $\sigma(t)$ to $x$ can be
  extended by a vertical path to a point $x'$ on the
  $k + 2\delta$--horoball $\mc{H}'$ nested inside $\mc{H}$.  Call this
  extended geodesic $\tau$.  Now consider a geodesic triangle two of
  whose sides are $\tau$ and $\sigma(t,a'+2\delta)$.  (Regularity of
  $\sigma$ implies that $\sigma(a'+2\delta)$ is on the boundary of
  $\mc{H}'$.)  The third side of the triangle has endpoints in
  $\mc{H}'$, which is convex by the first part of
  Lemma~\ref{lem:gmlemma}.  In particular the distance from $y$ to
  this third side is at least $2\delta$, so there is a point $y'$ on
  $\sigma(t,a+2\delta)$ which is within $\delta$ of $y$.  Either $y'$
  lies between $\sigma(t)$ and $x$, or on $\tau\cap \mc{H}$ at depth
  at most $\delta$.  In either case, we deduce that
  $a'-t \le d_X(\sigma(t),x)+\delta$, as desired.
\end{proof}

The Gromov boundary of the cusped space $X$ is the Bowditch boundary
$\bgamp$ of the pair $(\Gamma, \mathcal{P})$ (see \cite{Bowditch12},
\cite{GM08}). We fix a metric $\dvis$ on $\bgamp$.  Metric notions
such as diameter, $\epsilon$-neighborhoods are always with respect to
this metric.  We write $B_r(x)$ for the (open) ball about $x$ of
radius $r$, and $N_r(Y)$ for the open $r$-neighborhood of a subset
$Y$.

When we need to distinguish the natural action of $\Gamma$ on $\bgamp$
from another action of this group on the space, we will use the
notation $\rho_0(g)(x)$ for the action of $g \in \Gamma$ on
$x \in \bgamp$.  However, in the first part of this work we use {\em
  only} this action, and so for convenience shorten this to $gx$.

Each subgroup in $\mathcal{P}$ acts on $\bgamp$ with a unique fixed
point. We denote the finite set of fixed points of groups in $\mc{P}$
by $\Pi$, and for $p \in \Pi$, we let $\Gamma_p \in P$ denote the
subgroup fixing $p$.  Thus,
\[ \mathcal{P} = \{ \Gamma_p \st p \in \Pi\}. \] Any
$\Gamma$-translate in $\bgamp$ of a point in $\Pi$ is called a
\emph{parabolic point}.

We now recall some important properties of the action
of $\Gamma$ on $\bgamp$, which we use to set up the next construction.
First, the group $\Gamma$ acts on $\bgamp$ as a \emph{convergence
  group} (see \cite{Tukia94}), meaning that the induced action on the space of distinct
triples in $\bgamp$ is properly discontinuous.

Secondly, $\Gamma$ acts cocompactly on pairs of distinct points in the
Bowditch boundary $\bgamp$.  Thus, we can set the following separation
constant:
\begin{definition}
  \label{def:pair_separation_constant}
  We fix a constant $D > 0$ such that for any pair of distinct points
  $x, y \in \bgamp$, there is some $g \in \Gamma$ such that
  $\dvis(g x, g y) > D$.
\end{definition}

Finally, the action of $\Gamma$ on $\bgamp$ is \emph{geometrically
  finite} in a dynamical sense, meaning that every non-parabolic point
in $\bgamp$ is a \emph{conical limit point}, and every parabolic point
is \emph{bounded}.

We recall that a point $z \in \bgamp$ is a conical limit point if
there are distinct points $a,b \in \bgamp$ and a sequence
$(g_i)_{i \in \bN}$ so that $g_iz \to b$ and $g_ix \to a$ uniformly on
compacts in $\bgamp \minus \{z\}$. A parabolic point $p$ is
\emph{bounded} if its stabilizer $\Gamma_p \subset \Gamma$ acts with
compact quotient on the space $\bgamp \minus \{p\}$.  Thus, we define
the following.
\begin{definition}
  \label{defn:parabolic_fund_domain}
  For each parabolic point $p \in \Pi$, we fix a compact set
  $K_p \subset \bgamp \minus \{p\}$ such that
  $\Gamma_p \cdot K_p = \bgamp \minus \{p\}$. 
\end{definition}

For each $p$, since $K_p$ is compact, the distance $\dvis(x, p)$ is bounded below
by a positive constant. Further, since we assume $\Gamma$ is
non-elementary, $\bgamp$ is uncountable. Since $\Gamma_p$ is
countable, $K_p$ must have positive diameter. So, we can also make the
following definition.
\begin{definition}
  \label{defn:bounded_parabolic_constant}
  We fix a constant $D_\Pi > 0$ so that for each $p \in \Pi$, we have $D_\Pi < \diam(K_p)$ and
  $D_\Pi < \dvis(K_p, p)$.
\end{definition}

\section{Automaton} \label{sec:automaton}

In this section we give a version of the construction in Section~2 of
\cite{MMW1} (see also Sections 5 and 6 of \cite{Weisman}). The former
paper shows that every point in the Gromov boundary of a hyperbolic
group has an ``expanded neighborhood'' which is well-adapted to the
construction of an automaton which codes points.  In the setting of
this paper, conical points can be coded in essentially the manner of
\cite{MMW1}. However, parabolic fixed points need separate treatment,
for which we adapt the approach in \cite{Weisman}.

We start with the conical case, adapting Lemma~2.3 in \cite{MMW1} as
follows.
\begin{lemma}[Expanded neighborhoods]\label{lem:goodneighborhoods}
  For any positive $\epsilon < \frac{D}{5}$ and any
  conical limit point $z \in \bgamp$, there exists $\alpha_z\in \Gamma$
  and a pair of open neighborhoods $V(z)\subset W(z)$ of $z$ so that
  \begin{enumerate}
  \item\label{lemitm:smalldiam} $\diam(W(z)) \le \epsilon$;
  \item\label{lemitm:bigW} $\diam(\alpha_z^{-1}W(z))>4\epsilon$; and
  \item\label{lemitm:deepnestU}
    $\overline{N}_{2\epsilon}(\alpha_z^{-1}V(z)) \subset
    \alpha_z^{-1}W(z)$.
  \end{enumerate}
\end{lemma}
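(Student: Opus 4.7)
The plan is to build $\alpha_z$ as the inverse of a well-chosen term in a conical sequence for $z$, after normalizing the sequence so that its attracting point $a$ and the limit image $b$ of $z$ are $D$-separated. The hypothesis $\epsilon < D/5$ then provides enough slack to satisfy simultaneously the large-diameter condition (2) and the $2\epsilon$-buffer condition (3).

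By the dynamical characterization of conical points, choose distinct $a, b \in \bgamp$ and $g_n \in \Gamma$ with $g_n z \to b$ and $g_n x \to a$ uniformly on compacts in $\bgamp \minus \{z\}$. Using Definition~\ref{def:pair_separation_constant} together with a generic-choice argument (the conditions $ha = z$ and $hb = z$ each cut out a single coset in the infinite group $\Gamma$, while $\dvis(ha, hb) > D$ holds on a nonempty set), after replacing $g_n$ by $hg_n$ for a suitable $h \in \Gamma$ we may assume $\dvis(a, b) > D > 5\epsilon$ and $a, b \ne z$. The convergence-group property then provides the inverse dynamics $g_n^{-1}(y) \to z$ uniformly on compacts in $\bgamp \minus \{a\}$. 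Fix $\epsilon_1, \epsilon_2 \in (0, \epsilon/2)$ and choose $n$ large enough that $g_n(\bgamp \minus B_{\epsilon/2}(z)) \subset B_{\epsilon_1}(a)$ and $g_n^{-1}(B_{\epsilon_2}(b)) \subset B_{\epsilon/2}(z)$. Set $\alpha_z := g_n^{-1}$, $W(z) := B_{\epsilon/2}(z)$, and $V(z) := g_n^{-1}(B_{\epsilon_2}(b))$.

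Condition (1) is immediate. The first estimate rearranges, using that $g_n$ is a bijection, to $\alpha_z^{-1} W(z) \supseteq \bgamp \minus B_{\epsilon_1}(a)$. Because $\bgamp$ is perfect (a standard consequence of $(\Gamma, \mc{P})$ being non-elementary), there is a point $d \in \bgamp$ with $\epsilon_1 < \dvis(d, a) < 2\epsilon_1$; then both $d$ and $b$ lie in $\alpha_z^{-1} W(z)$ and $\dvis(d, b) \ge \dvis(a, b) - 2\epsilon_1 > D - \epsilon > 4\epsilon$, giving (2). The second estimate gives $V(z) \subset W(z)$ and $\alpha_z^{-1} V(z) = B_{\epsilon_2}(b)$. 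Since $\epsilon_1 + \epsilon_2 + 2\epsilon < 3\epsilon < D < \dvis(a, b)$, one has $\overline{N}_{2\epsilon}(B_{\epsilon_2}(b)) \subseteq \overline{B_{\epsilon_2 + 2\epsilon}(b)} \subseteq \bgamp \minus B_{\epsilon_1}(a) \subseteq \alpha_z^{-1} W(z)$, giving (3).

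The one genuinely delicate step is the simultaneous normalization $\dvis(a, b) > D$ together with $a, b \ne z$; this is where the pair-separation constant $D$ of Definition~\ref{def:pair_separation_constant} is used essentially. The perfectness of $\bgamp$ is the other key ingredient, needed to convert the set-theoretic containment $\alpha_z^{-1} W(z) \supseteq \bgamp \minus B_{\epsilon_1}(a)$ into the honest diameter bound in (2).
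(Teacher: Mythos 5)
Your overall strategy matches the paper's: choose a conical sequence $g_n$ for $z$, normalize so that $\dvis(a,b) > D$ using Definition~\ref{def:pair_separation_constant}, set $W(z) = B_{\epsilon/2}(z)$ and $\alpha_z = g_n^{-1}$ for $n$ large, use perfectness of $\bgamp$ to establish the diameter lower bound in (2), and use the fact that $\alpha_z^{-1}V(z)$ is concentrated near $b$ while $\alpha_z^{-1}(\bgamp \setminus W(z))$ is concentrated near $a$ to get (3). The bookkeeping in (1) and (3) is fine.

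There is one genuine (if small) flaw in the treatment of (2). You first fix $\epsilon_1 \in (0,\epsilon/2)$, then choose $n$, and only afterwards invoke perfectness to produce a point $d$ with $\epsilon_1 < \dvis(d,a) < 2\epsilon_1$. Perfectness of $\bgamp$ only says that $a$ is not isolated, i.e.\ there are points of $\bgamp$ arbitrarily close to (and distinct from) $a$; it does \emph{not} guarantee a point in a prescribed metric annulus around $a$. When $\bgamp$ is a Cantor set, for example, the set $\{\,d : \epsilon_1 < \dvis(d,a) < 2\epsilon_1\,\}$ may be empty for your choice of $\epsilon_1$, and then the step fails. The fix, which is what the paper actually does, is to reverse the order of quantifiers: first use perfectness to choose $a' \neq a$ with $\dvis(a,a') < \epsilon$ (call this distance $\epsilon'$), \emph{then} choose $n$ large enough that $g_n(\bgamp \setminus W(z)) \subset B_{\epsilon'}(a)$. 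Since $a'$ lies on the boundary (not in the interior) of $B_{\epsilon'}(a)$, one gets $a' \in \alpha_z^{-1}W(z)$, and together with $b \in \alpha_z^{-1}W(z)$ this gives $\diam(\alpha_z^{-1}W(z)) \ge \dvis(a',b) \ge D - \epsilon > 4\epsilon$. After this reordering your argument is correct and essentially coincides with the paper's proof.
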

\begin{proof}
    We choose some $\epsilon<\frac{D}{5}$ where $D$ is the constant from Definition~\ref{def:pair_separation_constant}.
  
    Let $z\in \bgamp$ be is a conical limit point.  Since $z$ is
    conical, we can find distinct points $a,b$ and a sequence of group
    elements $(g_i)_{i\in\bN}$ so that $g_iz\to b$ and $g_i x\to a$
    uniformly for all $x \neq z$.  Up to post-composing all $g_i$ with
    a fixed element $g$ as in
    Definition~\ref{def:pair_separation_constant} if necessary, we may
    assume $\dvis(a,b) \ge D$.  Also, since $\bgamp$ is perfect, there
    is some point $a'\ne a$ with $\dvis(a,a') = \epsilon' <\epsilon$.
  
    Let $W(z)=B_{\epsilon/2}(z)$, so Property~\eqref{lemitm:smalldiam} is satisfied.  Let $K$ be the complement of $W(z)$ in $\bgamp$.
    The set $K$ is compact and does not contain $z$, so for $i$ sufficiently large, we have $g_i K\subset B_{\epsilon'}(a)$ and $g_i z\in B_\epsilon(b)$.

  Fixing some such $i$, set $\alpha_z= g_i^{-1}$, and let
  $V(z) = \alpha_z(B_\epsilon(b))$.  Note that $B_\epsilon(a)$ contains
  $\alpha_z^{-1} K = \bgamp - \alpha_z^{-1} W(z)$.   
  Since $B_\epsilon(a)$ is
  disjoint from $B_\epsilon(b)$, we have
  \[ V(z) = \alpha_z(B_\epsilon(b))\quad \subset\quad  \bgamp - \alpha_z (K) = W_z. \]

  The set $\alpha_z^{-1}W(z) = \bgamp \minus \alpha_z^{-1}K$ contains both $b$ and $a'$, so $\diam(\alpha_z^{-1}W(z)) \ge \dvis(b,a') \ge D-\epsilon > 4\epsilon$, establishing Property~\eqref{lemitm:bigW}.

 Finally, since $\dvis(b,\alpha_z^{-1}K)\ge D-\epsilon > 4\epsilon$, we have
 \[ \overline{N}_{2\epsilon}(\alpha_z^{-1}V(z))
   \subset \overline{B}_{3\epsilon}(b) \subset (\bgamp \minus \alpha_z^{-1} K) = \alpha_z^{-1}W(z), \]
  establishing Property~\eqref{lemitm:deepnestU}.
  \end{proof}

  To treat parabolic points, we follow an argument given in
  \cite{Weisman}; compare the lemma below to \cite[Lemma
  6.7]{Weisman}.
  \begin{lemma}[Parabolic points]
  \label{lem:goodneighborhoods_p}
  For each point $p \in \Pi$, any $\epsilon < D_\Pi/5$, and each
  $q = gp$, there exist open sets
  $\hat{V}(p) \subset \hat{W}(p) \subset \bgamp \minus \{p\}$,
    and a finite set $F_q \subset g\Gamma_p$
  such that:
  \begin{enumerate}
  \item\label{lemitm:parabolic_bigW}
    $\diam(\hat{W}(p)) > 4\epsilon$;
  \item\label{lemitm:parabolic_deepnest}
    $\overline{N}_{2\epsilon}(\hat{V}(p)) \subset \hat{W}(p)$;
  \item\label{lemitm:parabolic_closure}
    $\overline{N}_{\epsilon}(\hat{W}(p))$ does not contain
    $p$; 
  \item\label{lemitm:parabolic_coding_elt} The set $V(q): = \{q\} \cup \bigcup_{\alpha \in g{\Gamma_p} - F_q} \alpha \hat{V}(p)$ is  a neighborhood of $q$;
    \item\label{lemitm:parabolics_in} The set $W(q): = \{q\} \cup \bigcup_{\alpha \in g{\Gamma_p} - F_q} \alpha \hat{W}(p)$ 
 contains $V(q)$; 
    \item\label{lemitm:parabolic_small_diam}
    $\diam(W(q)) \le \epsilon$.
  \end{enumerate}
\end{lemma}
\begin{proof}
  Let $q \in \bgamp$ be a parabolic point with $q = gp$ for
  $p \in \Pi$. Let $K_p$ be the compact set from
  Definition~\ref{defn:parabolic_fund_domain}, so that
  $\Gamma_p \cdot K_p = \bgamp \minus \{p\}$, $\diam(K_p) > D_\Pi$, and
  $\dvis(x, p) > D_\Pi$ for every $x \in K_p$. We let
  $\hat{V}(p) = N_\epsilon(K_p)$, and we let
  $\hat{W}(p) = N_{4\epsilon}(K_p)$.

  Conditions
  \eqref{lemitm:parabolic_bigW} and \eqref{lemitm:parabolic_deepnest}
  above are immediate. Further, since
  $N_{\epsilon}(\hat{W}(p)) \subset N_{5\epsilon}(K_p)$ and we assume
  $\epsilon < D_\Pi/5$, we know that the closed neighborhood
  $\overline{N}_{\epsilon}(\hat{W}(p))$ does not contain $p$, so
  Condition~\eqref{lemitm:parabolic_closure} holds as well.

  The stabilizer of a parabolic point $p \in \bgamp$ acts
  properly discontinuously on $\bgamp \minus \{p\}$. So, for any
  neighborhood $U$ of $p$ in $\bgamp$, there is a finite set $F$
  (depending on $U$) so that for any $\alpha \in \Gamma_p \minus F$, we have
  $\alpha \overline{\hat{W}(p)} \subset U$. Thus, by taking
  $U = g^{-1}B_{\epsilon/2}(q)$, we see that for $F_q = gF$,
  we have
  $\alpha \overline{\hat{W}(p)} \subset B_{\epsilon/2}(q)$ for all $\alpha \in g\Gamma_p \minus F_q$.

  We now define $W(q)$ and $V(q)$ exactly as stated in Conditions~\eqref{lemitm:parabolics_in} and~\eqref{lemitm:parabolic_coding_elt}. Our choice of $F_q$ ensures that
  $W(q) \subset B_{\epsilon/2}(q)$, so we know Condition~\eqref{lemitm:parabolic_small_diam} holds.

  It remains to verify that $W(q)$ and $V(q)$ are open
  neighborhoods of $q$. For this, it is enough to show that $W(q)$ and
  $V(q)$ each contain an open neighborhood of $q$. However, since
  $g\Gamma_p \cdot K_p = \bgamp \minus \{q\}$, and $\hat{V}(p)$ and
  $\hat{W}(p)$ both contain $K_p$, we know that $V(q)$ and $W(q)$ both
  contain the complement of the set $\bigcup_{\beta \in F_q} \beta
  K_p$. This set is a finite union of compact subsets not containing
  $q$, so its complement contains an open neighborhood of $q$.
\end{proof}

\subsection{Fixing $\bigepsilon$ and the geometric automaton} \label{subsec:fixing}

The construction of the automaton depends on a pair of nested finite
open covers of $\bgamp$, coming from the sets $V(z)$ and $W(z)$
provided by Lemma~\ref{lem:goodneighborhoods} and
Lemma~\ref{lem:goodneighborhoods_p}. The covers we construct via these
lemmas depend on the target neighborhood of the identity $\mathcal{V}$
needed in Theorem \ref{thm:main}.  Thus, we suppose now that we have
been given some $\mc{V} \subset C(\bgamp)$, and make the following
definition.

\begin{definition} \label{def:epsilon} 
  We fix a constant $\bigepsilon > 0$ so that $\bigepsilon$ is smaller than
  $\min(D/5, D_\Pi/5)$, and additionally such that every continuous
  self-map $\bgamp\to \bgamp$ $\bigepsilon$--close to the identity lies
  in $\mc{V}$.
\end{definition}

For each conical limit point $z \in \bgamp$, choose a pair of open
neighborhoods $V(z)^\ast\subset W(z)$ of $z$ and an element
$\alpha(z)$ as in the statement of Lemma~\ref{lem:goodneighborhoods},
for our chosen $\bigepsilon$.  Later, we will modify the $V(z)^\ast$
slightly, hence the provisional $\ast$ in the notation, rather than
simply calling the set $V(z)$ as in the Lemma statement.

For each parabolic point $p \in \Pi$ choose open sets
$\hat{V}(p)^\ast \subset \hat{W}(p)$ as in
Lemma~\ref{lem:goodneighborhoods_p}, again using the fixed
$\bigepsilon$. In addition, for each parabolic point $q \in \bgamp$ with
$q = gp$ for $p \in \Pi$, choose neighborhoods
$V(q)^\ast \subset W(q)$ of $q$ and a finite set
$F_q \subset g\Gamma_p$ as in the same lemma.

Let $Z \subset \bgamp$ be a finite collection so that the sets
$\{V(z)^\ast \}_{z\in Z}$ cover $\bgamp$.  We streamline notation as follows.  

\begin{notation} \label{notation:hats}
  For each conical limit point $z \in Z$, we denote $\alpha_z^{-1}V(z)^\ast$ by
  $\hat{V}(z)^\ast$ and $\alpha_z^{-1}(W(z))$ by $\hat{W}(z)$.  These sets
  will play an important role.

  If $q = gp$ for $p \in \Pi$, we will define
  $\hat{V}(q)^\ast := \hat{V}(p)^\ast$.
\end{notation}

We will set up an automaton with vertex set $Z$, and 
edges determined by the combinatorics of
the intersections these sets and labeled by certain elements of $\Gamma$.  For this we use the following definition.  

\begin{definition}
  For each point $z \in Z$, let
  $L(z) \subset \Gamma$ be defined as follows.  
  \begin{itemize}
  \item If $z$ is a conical limit point, then $L(z)$ is the singleton
    $\{\alpha_z\}$.
  \item If $z$ is a parabolic point $q = gp$ for $p \in \Pi$, then we
    define $L(z) := g\Gamma_p - F_q$, where $F_q$ is the set given by Lemma \ref{lem:goodneighborhoods_p}.  
  \end{itemize}
\end{definition}

Since we ultimately want to use this automaton to prove stability
properties of the action of $\Gamma$ on $\bgamp$, we need to make sure
that this intersection pattern is stable. More precisely, we want to modify the sets $V(z)^\ast$ and $\hat{V}(z)^\ast$ slightly to sets $V(z)$ and $\hat{V}(z)$ which satisfy
$\overline{\hat{V}(z)} \cap \overline{V(y)} = \emptyset$ if and only
  if $\hat{V}(z) \cap V(y) = \emptyset$.
  
For this we use the following lemma.  
  \begin{lemma}
  \label{lem:parabolic_nbhds_continuous}
  For any $z \in Z$ and any $\eta > 0$, there exists $r > 0$ so that
  the set
  \[
    \bigcup_{\alpha \in L(z)} \alpha N_r(\hat{V}(z)^\ast)
  \]
  is contained in the $\eta$-neighborhood of $V(z)^\ast$.
\end{lemma}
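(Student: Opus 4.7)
The plan is to split the argument on whether $z$ is a conical limit point or a parabolic fixed point, since the two cases have quite different flavors: the set $L(z)$ is a singleton in the conical case but infinite in the parabolic case. The conical case is essentially immediate, while the parabolic case requires splitting $L(z)$ into a finite ``exceptional'' part and an infinite ``tail'' part.

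In the conical case, $L(z) = \{\alpha_z\}$, and the homeomorphism $\alpha_z$ of the compact metric space $(\bgamp, \dvis)$ is uniformly continuous. I would simply choose $r > 0$ so that $\dvis(x, y) < r$ implies $\dvis(\alpha_z x, \alpha_z y) < \eta$; then
\[
\alpha_z N_r(\hat{V}(z)^\ast) \subset N_\eta(\alpha_z \hat{V}(z)^\ast) = N_\eta(V(z)^\ast),
\]
as desired.

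In the parabolic case, $z = q = gp$ with $p \in \Pi$, and $L(z) = g\Gamma_p \minus F_q$. Proper discontinuity of $\Gamma_p$ on $\bgamp \minus \{p\}$, together with compactness of $\overline{\hat{W}(p)}$ inside $\bgamp \minus \{p\}$ (which follows from Condition \ref{lemitm:parabolic_closure}), implies that the translates $\gamma \overline{\hat{W}(p)}$ for $\gamma \in \Gamma_p$ accumulate only at $p$. Applying $g$, only finitely many $\alpha \in g\Gamma_p$ fail to satisfy $\alpha \overline{\hat{W}(p)} \subset B_\eta(q)$; I would call this finite exceptional set $F'$. For any $\alpha \in L(z) \minus F'$, using that $q \in V(q)^\ast$ by Condition \ref{lemitm:parabolic_coding_elt} (so $B_\eta(q) \subset N_\eta(V(q)^\ast)$) and that $N_r(\hat{V}(p)^\ast) \subset \hat{W}(p)$ whenever $r \le 2\bigepsilon$ (Condition \ref{lemitm:parabolic_deepnest}), I would obtain
\[
\alpha N_r(\hat{V}(p)^\ast) \subset \alpha \hat{W}(p) \subset B_\eta(q) \subset N_\eta(V(q)^\ast).
\]
For the remaining finitely many $\alpha \in L(z) \cap F'$, I would invoke uniform continuity of each individual homeomorphism separately, obtaining $r_\alpha > 0$ with $\alpha N_{r_\alpha}(\hat{V}(p)^\ast) \subset N_\eta(\alpha \hat{V}(p)^\ast) \subset N_\eta(V(q)^\ast)$, where the last inclusion is again Condition \ref{lemitm:parabolic_coding_elt}. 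Taking $r := \min\bigl(2\bigepsilon,\ \min_{\alpha \in L(z) \cap F'} r_\alpha\bigr)$, which is positive since the inner minimum is over a finite set, then completes the argument.

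The main conceptual step is recognizing that proper discontinuity of the peripheral subgroup on the complement of its fixed point is exactly what controls the infinitely many translates arising from $L(z)$ in the parabolic case. Once that observation is in place, the rest reduces to a routine comparison between $\hat{V}(p)^\ast$ and its enlargement $\hat{W}(p)$ combined with uniform continuity applied to finitely many exceptional homeomorphisms, so I do not anticipate substantial technical difficulty beyond careful bookkeeping of which translates of $\hat{W}(p)$ lie near $q$.
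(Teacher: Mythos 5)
Your proof is correct and follows essentially the same approach as the paper: split into conical and parabolic cases, and in the parabolic case separate $L(z)$ into a finite exceptional set handled by uniform continuity and an infinite tail handled by proper discontinuity of $\Gamma_p$ on $\bgamp \minus \{p\}$. The one minor difference is that you apply proper discontinuity to the fixed compact set $\overline{\hat{W}(p)}$, whereas the paper applies it to $\overline{N_r(\hat{V}(p)^\ast)}$ after first shrinking $r$ to make that set avoid $p$; your choice is slightly cleaner in that the exceptional set $F'$ does not depend on $r$, but the two arguments are otherwise equivalent.
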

\begin{proof}
  When $z$ is a conical limit point, then $L(z)$ is a singleton
  $\{\alpha_z\}$ and $V(z)^\ast = \alpha_z \hat{V}(z)^\ast$. 
  Thus, $\alpha_z N_r(\hat{V}(z)^\ast) \subset N_\eta( \alpha_z \hat{V}(z)^\ast)$ holds for all 
  sufficiently small $r$ because $\Gamma$ acts by homeomorphisms.
  Now assume that
  $z$ is parabolic point $q = gp$ for $p \in \Pi$, and
  $L(z) = g\Gamma_p \minus F_q$.

  Part \ref{lemitm:parabolic_closure} of
  Lemma~\ref{lem:goodneighborhoods_p} ensures that we can choose $r$
  small enough so that the closure of $N_r(\hat{V}(p)^\ast)$ does not
  contain $p$. Then, for a finite subset $E_\eta \subset g\Gamma_p$,
  every $\alpha \in g\Gamma_p \minus E_\eta$ satisfies
  $\alpha N_r(\hat{V}(p)) \subset B_\eta(q)$. As $V(q)$ contains $q$,
  this ensures that
  $\bigcup_{\alpha \in g\Gamma_p \minus E_\eta} N_r(\hat{V}(p)^\ast)$
  is contained in $N_\eta(V(q)^\ast)$.

  Since the set $E_\eta \minus F_q$ is finite, we can choose
  $r$ small enough so that $\alpha N_r(\hat{V}(p)^\ast)$ is contained in
  $N_\eta(\alpha \hat{V}(p)^\ast)$ for every
  $\alpha \in E_\eta \minus F_q$. Since $V(q)$ contains the union of
  sets $\alpha \hat{V}(p)^\ast$ for $\alpha \in g\Gamma_p \minus F_q$, this
  guarantees that the desired inclusion holds.
\end{proof}

The following proposition collects the key properties of the combinatorics of an open cover that we will use to build the automaton. 

\begin{proposition} \label{prop:C_properties} There exist sets
  $V(z) \supset V(z)^\ast$ and $\hat{V}(z) \supset \hat{V}(z)^\ast$ so
  that for each $z \in Z \setminus \Pi$ we have
  $\hat{V}(z) = \alpha_z^{-1}V(z)$, and for every $z \in Z$, we have:
  \begin{enumerate}[label=(C\arabic*)]
  \item\label{item:smalldiam} $\diam(W(z)) < \bigepsilon$;
  \item\label{item:bigW} $\diam(\hat{W}(z)) > 4\bigepsilon$;
  \item\label{item:preimagenest}
    $\overline{N}_{2\bigepsilon}(\hat{V}(z)) \subset
    \hat{W}(z)$ 
  \item \label{item:edgenest} The set $W(z)$ is equal to
    $\{z\} \cup \bigcup_{\alpha \in L(z)} \alpha \hat{W}(z)$;
  \item \label{item:edgedefinition}The set $V(z)$ is equal to
    $\{z\} \cup \bigcup_{\alpha \in L(z)}\alpha \hat{V}(z)$ and $\overline{V(z)} \subset W(z)$. 
    \item
  \label{item:stable_intersections} For any pair $y, z \in Z$, we have
  $\overline{\hat{V}(z)} \cap \overline{V(y)} = \emptyset$ iff $\hat{V}(z) \cap V(y) = \emptyset$.
\end{enumerate}
\end{proposition}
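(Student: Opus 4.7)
The plan is to define $\hat V(z)$ as a small open thickening $N_{r_z}(\hat V(z)^\ast)$ for radii $r_z>0$ to be constrained, and then to define $V(z)$ by the formula in \ref{item:edgedefinition}. In particular, for each conical $z$ this yields $V(z) = \alpha_z \hat V(z)$, matching the identity $\hat V(z) = \alpha_z^{-1}V(z)$, while for parabolic $q = gp \in Z$ the convention $\hat V(q) := \hat V(p) = N_{r_p}(\hat V(p)^\ast)$ forces the choice. Properties \ref{item:smalldiam}, \ref{item:bigW}, and \ref{item:edgenest} involve only the sets $W(z)$ and $\hat W(z)$, and are delivered directly by Lemmas~\ref{lem:goodneighborhoods} and~\ref{lem:goodneighborhoods_p}. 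For \ref{item:preimagenest}, observe that $\overline{N}_{2\bigepsilon}(\hat V(z)^\ast)$ is a closed (hence compact) subset of the open set $\hat W(z)$; compactness of $\bgamp$ gives some $\delta_z > 0$ with $N_{\delta_z}(\overline{N}_{2\bigepsilon}(\hat V(z)^\ast)) \subset \hat W(z)$, and taking $r_z < \delta_z$ then ensures $\overline{N}_{2\bigepsilon}(\hat V(z)) \subset \hat W(z)$.

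For the closure containment $\overline{V(z)} \subset W(z)$ of \ref{item:edgedefinition}, the conical case is immediate from \ref{item:preimagenest} via $\overline{V(z)} = \alpha_z\overline{\hat V(z)} \subset \alpha_z \hat W(z) = W(z)$. For a parabolic $q = gp \in Z$ one first verifies $\overline{V(q)} = \{q\} \cup \bigcup_{\alpha \in L(q)} \alpha\overline{\hat V(p)}$, which uses part~\ref{lemitm:parabolic_closure} of Lemma~\ref{lem:goodneighborhoods_p} to keep $\hat V(p)$ bounded away from $p$ so that the translates $\alpha \hat V(p)$ can accumulate only at $q$; the containment $\overline{V(q)} \subset W(q)$ then follows termwise from $\overline{\hat V(p)} \subset \hat W(p)$.

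The substantive property is the stability condition \ref{item:stable_intersections}. The direction $\Rightarrow$ is trivial, and I would prove the contrapositive of the converse pair by pair over the finite set $Z \times Z$, splitting into two cases according to whether $\overline{\hat V(z)^\ast}$ and $\overline{V(y)^\ast}$ are disjoint. In the disjoint case, compactness of $\bgamp$ separates them by some $d_0 > 0$; Lemma~\ref{lem:parabolic_nbhds_continuous} (or uniform continuity of $\alpha_y$ acting on $\bgamp$ in the conical case) lets me shrink $r_y$ so that $V(y) \subset N_\eta(V(y)^\ast)$ for any prescribed $\eta > 0$, and taking $r_z + \eta < d_0$ forces $\overline{\hat V(z)}$ and $\overline{V(y)}$ to remain at positive distance apart. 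In the remaining case, pick any $x \in \overline{\hat V(z)^\ast} \cap \overline{V(y)^\ast}$: since $\hat V(z)^\ast$ is open we have $d(x,\hat V(z)^\ast)=0<r_z$, so $x \in N_{r_z}(\hat V(z)^\ast) = \hat V(z)$; and $x \in \overline{V(y)^\ast}$ is either $y$ itself (hence in $V(y)$) or, by the same accumulation-only-at-$y$ argument used above, lies in $\alpha\overline{\hat V(y)^\ast}$ for a single $\alpha \in L(y)$, whence $\alpha^{-1}x$ is at distance zero from $\hat V(y)^\ast$ and so $x \in \alpha \hat V(y) \subset V(y)$. Thus $\hat V(z) \cap V(y) \neq \emptyset$, both sides of \ref{item:stable_intersections} are nonempty, and the contrapositive holds.

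Since $Z$ is finite, the finitely many constraints on each $r_z$ coming from \ref{item:preimagenest} and the disjoint case of \ref{item:stable_intersections} can all be met simultaneously, yielding the desired family of sets. The main obstacle I anticipate is handling the infinite unions that define $V(q)$ and $\overline{V(q)}$ for parabolic $q$; both the closure identity and the single-translate step in \ref{item:stable_intersections} depend on the accumulation dynamics near $q$, and ultimately rest on part~\ref{lemitm:parabolic_closure} of Lemma~\ref{lem:goodneighborhoods_p} together with Lemma~\ref{lem:parabolic_nbhds_continuous}.
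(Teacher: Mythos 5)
Your proof is correct and follows the same strategy as the paper: enlarge each $\hat V(z)^\ast$ to a small open thickening, define $V(z)$ via the formula in \ref{item:edgedefinition}, use Lemma~\ref{lem:parabolic_nbhds_continuous} to keep $V(z)$ in a controlled neighborhood of $V(z)^\ast$, and observe that the finitely many constraints can be met simultaneously. You are more explicit than the paper about why \ref{item:stable_intersections} holds after thickening — your case split on whether $\overline{\hat V(z)^\ast} \cap \overline{V(y)^\ast}$ is empty, together with the closure identity for parabolic $V(q)$, is exactly the argument the paper leaves implicit.
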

Note that $\{V(z)\}_{z \in Z}$ is still a cover of $\bgamp$, and the index set $Z$, elements $\alpha_z$, and sets $W(z), \hat{W}(z)$ and $L(z)$ remain unchanged.

\begin{proof}
That the sets $V(z)^\ast$ and $\hat{V}(z)^\ast$ already chosen satisfy conditions \ref{item:smalldiam}-\ref{item:edgedefinition} nearly follows from Lemma \ref{lem:goodneighborhoods} and \ref{lem:goodneighborhoods_p}.  For the last part of \ref{item:edgedefinition} in the case of parabolic $z$ we must also use Lemma~\ref{lem:parabolic_nbhds_continuous}.

We can replace each of the sets
$\hat{V}(z)^\ast$ with a slightly larger set
$\hat{V}(z)$, and define $V(z) = \alpha_z(\hat{V}(z))$ for conical $z$, and via the expression in \ref{item:edgedefinition} for parabolic $z$.  
Lemma \ref{lem:parabolic_nbhds_continuous} tells us that we can do this so that
$V(z)$, $\hat{V}(z)$ are
respectively contained in arbitrarily small neighborhoods of $V(z)^\ast$
and $\hat{V}(z)^\ast$, and property~\ref{item:stable_intersections} above
holds for $V(z), \hat{V}(z)$.

By choosing $V(z)$ and
$\hat{V}(z)$ sufficiently close to $V(z)^\ast$, $\hat{V}(z)^\ast$, we can
ensure that all of the properties
\ref{item:smalldiam}-\ref{item:edgedefinition} still hold after we
replace $V(z)^\ast$, $\hat{V}(z)^\ast$ with $V(z)$, $\hat{V}(z)$.
(Note that Condition~\ref{item:preimagenest} is open, since it is about the \emph{closure} of $N_{2\bigepsilon}(\hat{V}(z))$.  In particular the condition is preserved when $\hat{V}(z)$ is enlarged slightly.)
\end{proof}

\putinbox{For the rest of the paper, we fix the index set $Z$, the
  open cover $V(z)$, as well as open sets $\hat{V}(z)$, $W(z)$,
  $\hat{W}(z)$, and the sets $L(z)$ for each $z \in Z$, and assume
  that these sets satisfy properties
  \ref{item:smalldiam}-\ref{item:stable_intersections} above.}

\begin{definition}[The automaton]\label{def:automaton}
  Let $\mc{G}$ be the directed graph with vertex set $Z$.  For
  $(z, y) \in Z \times Z$, if $\hat{V}(z) \cap V(y) = \emptyset$ there
  are no directed edges from $z$ to $y$, and if
  $\hat{V}(z) \cap V(y) \ne \emptyset$, then for each
  $\alpha \in L(z)$ we put a directed edge from $z$ to $y$ labeled by
  $\alpha$.  Thus, the set of edges from $z$ to $y$ is either empty or
  in bijective correspondence with $L(z)$. See
  Figure~\ref{fig:conical_edge} and Figure~\ref{fig:parabolic_edge}.
\end{definition}

Note that there are infinitely many outgoing edges from $z \in Z$ if
and only if $z$ is parabolic. We have constructed our automaton so
that it satisfies the following key property:
\begin{proposition}
  \label{prop:proper_nesting}
  If there is an edge from $z$ to $y$ in $\mc{G}$ labeled by a group
  element $\alpha \in L(z)$, then we have the inclusions
  \begin{equation}
    \label{eq:edge_inclusions}
    \alpha \overline{N}_\bigepsilon(W(y)) \subsetneq \alpha\hat{W}(z) \subset W(z). \tag{E1}
  \end{equation} 
  \end{proposition}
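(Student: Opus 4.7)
The plan is to verify the two inclusions separately and then establish strictness via a diameter comparison, working first in the ``unshifted'' picture (i.e., after applying $\alpha^{-1}$).

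For the outer inclusion $\alpha \hat{W}(z) \subset W(z)$, I would simply quote property~\ref{item:edgenest}, which asserts
\[
W(z) = \{z\} \cup \bigcup_{\beta \in L(z)} \beta\hat{W}(z),
\]
so any single $\alpha \in L(z)$ gives $\alpha\hat{W}(z) \subset W(z)$ directly.

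For the inner inclusion $\alpha \overline{N}_\bigepsilon(W(y)) \subset \alpha \hat{W}(z)$, since $\Gamma$ acts by homeomorphisms it suffices to show $\overline{N}_\bigepsilon(W(y)) \subset \hat{W}(z)$. An edge from $z$ to $y$ exists by hypothesis, so by the definition of $\mc{G}$ we have $\hat{V}(z) \cap V(y) \neq \emptyset$; pick a point $x$ in this intersection. Using $V(y) \subset W(y)$ (from $\overline{V(y)} \subset W(y)$ in \ref{item:edgedefinition}) and $\diam(W(y)) < \bigepsilon$ from \ref{item:smalldiam}, every point of $W(y)$ lies within distance $\bigepsilon$ of $x$. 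Any point in $\overline{N}_\bigepsilon(W(y))$ is therefore within $2\bigepsilon$ of $x \in \hat{V}(z)$, so
\[
\overline{N}_\bigepsilon(W(y)) \subset \overline{N}_{2\bigepsilon}(\hat{V}(z)) \subset \hat{W}(z),
\]
where the last inclusion is property~\ref{item:preimagenest}.

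Finally, strictness follows from a diameter bound: we have the estimate
\[
\diam\bigl(\overline{N}_\bigepsilon(W(y))\bigr) \le \diam(W(y)) + 2\bigepsilon < 3\bigepsilon,
\]
whereas $\diam(\hat{W}(z)) > 4\bigepsilon$ by \ref{item:bigW}. Hence $\overline{N}_\bigepsilon(W(y)) \ne \hat{W}(z)$, and applying the homeomorphism $\alpha$ preserves this strict containment, yielding \eqref{eq:edge_inclusions}. No step here looks difficult; the main point is just to chase the definitions, and the only ``trick'' is recognizing that the constant $\bigepsilon$ in \ref{item:smalldiam} and \ref{item:preimagenest} was chosen precisely so that one step of neighborhood expansion inside $W(y)$ still fits inside $\hat{W}(z)$.
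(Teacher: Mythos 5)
Your proof is correct and follows essentially the same route as the paper's: quote \ref{item:edgenest} for the right-hand inclusion, and combine the nonempty intersection $\hat{V}(z)\cap V(y)$ with \ref{item:smalldiam} and \ref{item:preimagenest} to get $\overline{N}_\bigepsilon(W(y))\subset \overline{N}_{2\bigepsilon}(\hat V(z))\subset\hat W(z)$, with strictness from comparing diameters via \ref{item:bigW}. Your diameter estimate $\diam(\overline{N}_\bigepsilon(W(y)))<3\bigepsilon<4\bigepsilon$ is in fact slightly more careful than the paper's phrasing (which only compares $\diam(W(y))$ directly to $\diam(\hat W(z))$), but the underlying idea is identical.
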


\begin{proof}
  If there is an edge from $z$ to $y$, then $\hat{V}(z) \cap V(y)$ is
  nonempty.  By \ref{item:edgedefinition}, we have $V(y) \subset W(y)$, so 
  $W(y) \cap \hat{V}(z)$ is nonempty. 
  By \ref{item:preimagenest} we have 
  $\overline{N}_{2\bigepsilon}(\hat{V}(z)) \subset \hat{W}(z)$
  Since 
  $\diam(W(y)) < \bigepsilon$ by \ref{item:smalldiam},  we have
  $\overline{N}_{\bigepsilon}(W(y)) \subset \hat{W}(z)$.  Moreover, since
  the diameter of $W(y)$ is at most $\bigepsilon$, and the diameter of
  $\hat{W}(z)$ is at least $4\bigepsilon$ by \ref{item:bigW}, this
  inclusion is proper. This proves the left-hand inclusion above.

  Finally, property \ref{item:edgenest} implies that if
  $\alpha \in L(z)$, then $\alpha \hat{W}(z) \subset W(z)$, which
  gives us the right-hand inclusion as well.
\end{proof}

\begin{figure}[htbp]
  \centering
  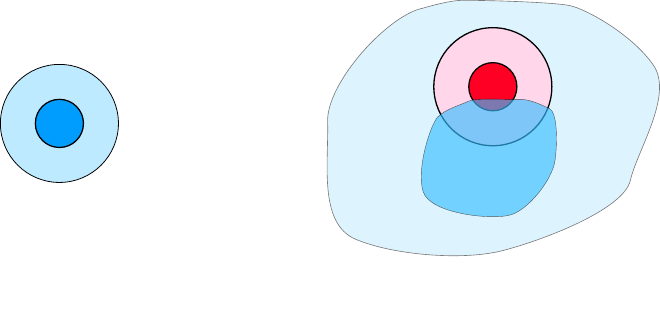
  \caption{If $z$ is conical, and $\hat{V}_z$ meets $V_y$, then there
    is an edge from $z$ to $y$ labeled $\alpha_z$. The group element
    $\alpha_z$ may or may not fix the point $z$.}
  \label{fig:conical_edge}
\end{figure}

\begin{figure}[htbp]
  \centering
  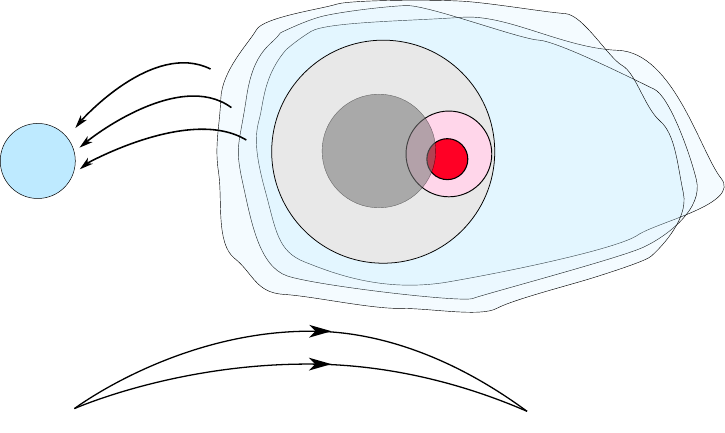
  \caption{If $z$ is parabolic, and $\hat{V}_z$ meets $V_y$, then for
    each $\alpha_i$ in $L(z)$ there is an edge from $z$ to $y$ labeled
    $\alpha_i$. In general $\alpha_i$ may or may not fix $z$, meaning
    that $W_z \cap \alpha_iW_z$ may or may not be empty.}
  \label{fig:parabolic_edge}
\end{figure}

We note the following consequence of Proposition \ref{prop:proper_nesting} for future use.  
\begin{remark} \label{rem:finitely_many} Consider a parabolic vertex
  $z = gp \in Z$, where $p \in \Pi$. If there is an edge from $z$ to
  another vertex $y \in Z$, by \eqref{eq:edge_inclusions} we have we
  have
  $\alpha \overline{N}_\bigepsilon(W(y)) \subsetneq \alpha \hat{W}(z)
  \subset W(z)$ for all $\alpha \in L(z)$, hence, for all but {\em
    finitely many} $\alpha \in g\Gamma_p$.  It follows that the closed
  neighborhood $\overline{N}_\bigepsilon(W(y))$ does not contain $p$.

  As a consequence, for any given $\eta > 0$ and any parabolic vertex
  $z \in Z$, if there is an edge from $z$ to $y$, then the inclusion
  $\alpha N_{\bigepsilon}(W(y)) \subset B_{\eta}(z)$ holds for all but
  finitely many $\alpha \in L(z)$. In particular, by choosing $\eta$
  sufficiently small, we can ensure that for all but finitely many
  exceptional $\alpha \in L(z)$, we have
  \[
    N_\eta(\alpha N_{\bigepsilon}(W(y))) \subset W(z).
  \]
  Further, since the edge inclusion condition
  \eqref{eq:edge_inclusions} still holds for the finitely many
  exceptional $\alpha$, there is some $\epsilon_z > 0$ so that for
  \emph{every} $\alpha \in L(z)$, we have
  \begin{equation} \label{eq:ez}
    N_{\epsilon_z}(\alpha(N_{\bigepsilon} W(y)))\subset W(z).
  \end{equation}
\end{remark}


\section{Properties of the $\rho_0$--automaton} \label{sec:automaton_prop}

In this section we explain how to associate points of $\bgamp$ to edge
paths in $\mc{G}$, called ``codings.''  While points of $\bgamp$ may
have more than one coding, we also show that any two codings of the
same point are geometrically related.

\begin{notation}
  We will often need to work with both finite-length and
  infinite-length edge paths in the graph $\mc{G}$; some of our
  results will apply to all edge paths, while others may apply only to
  infinite paths or only to finite paths.

  Whenever we refer to an edge path (or any other sequence) which may
  be either finite or infinite, we will let $I$ denote an index set
  for the sequence, which can be equal to either $\bN = \{1, \ldots\}$
  or $\{1, \ldots, n\}$ for some $n$, depending on context.
\end{notation}

\begin{definition}
  A \emph{strict conical $\mc{G}$--coding} is an infinite edge path in
  $\mc{G}$.  A \emph{strict parabolic $\mc{G}$--coding} is a finite
  edge path terminating in a parabolic
  point.  Note that it is possible that this edge path has length zero,
  in which case the coding is just a single parabolic point $z \in Z$.
  
  We use the notation $\head(e)$, $\tail(e)$, and $\lab(e)$ to denote
  (respectively) the initial vertex, terminal vertex, and label of an
  edge $e$ in the directed graph $\mc{G}$.  Thus, $\head(e)$ and $\tail(e)$ are elements of $\bgamp$ and $\lab(e) \in G$.  
   If
  $\coding{e} = (e_k)_{k \in I}$ is an edge path in $\mc{G}$, we call
  $(\tail(e_k))_{k \in I}$ the {\em terminal vertex sequence} and
  $(\lab(e_k))_{k \in I}$ the \emph{label sequence} for
  $\coding{e}$. We also define the \emph{initial vertex sequence} for
  $\coding{e}$ in the same way, except that for convenience we
  index this sequence starting from zero (so its $k$th term
  is $\head(e_{k+1})$).
  \end{definition}
  
\begin{definition} \label{def:coding} For a strict conical coding
  $\coding{e} = (e_k)_{k\in \bN}$ with label sequence
  $(\alpha_k)_{k \in \bN}$, and terminal vertex sequence
  $(z_{k})_{k \in \bN} = (\tail(e_k))_{k \in \bN}$, if
  \begin{equation*}
    \zeta \in \bigcap_{k = 0}^\infty\alpha_1 \cdots \alpha_k\overline{W(z_{k})}
  \end{equation*}
  we say that $\coding{e}$
  is a \emph{strict $\mc{G}$--coding of $\zeta$}.    If $\mc{G}$ is
  understood, we may omit it, and speak of a \emph{strict coding of $\zeta$}. 

  Similarly, a strict parabolic coding with label sequence $(\alpha_k)_{k\in\{1,\ldots,n\}}$ is a \emph{strict $\mc{G}$--coding of the parabolic point $\zeta$} if
  \begin{equation*}
    \zeta = \alpha_1\ldots \alpha_n q ,
  \end{equation*}
  where $q$ is the terminal point of the last edge.  If $\mc{G}$ is
  understood, we may speak simply of a sequence that (strictly)
  \emph{codes $\zeta$}.
\end{definition}

Ultimately we want to use $\mc{G}$-codings of points in $\bgamp$ to
understand perturbations of the $\Gamma$-action on $\bgamp$, so it is
useful to introduce a formalism which also allows $\Gamma$ to act on
the set of codings. In \cite{KKL}, this kind of idea is referred to as
``Sullivan's trick.''
\begin{definition}
  A \emph{generalized $\mc{G}$-coding} is a pair $(g_0, \coding{e})$,
  where $g_0$ is any element of $\Gamma$, and $\coding{e}$ is a strict
  $\mc{G}$-coding. The generalized coding $(g_0, \coding{e})$ is
  \emph{conical} if $\coding{e}$ is conical, and \emph{parabolic} if
  $\coding{e}$ is parabolic.

  If $\coding{e}$ codes a point $\xi \in \bgamp$, then we say that
  $(g_0, \coding{e})$ is a $\mc{G}$-coding of the point $g_0\xi$. We
  refer to the element $g_0$ as the \emph{initial point} of the coding
  $(g_0, \coding{e})$. Slightly abusing terminology, we also refer to
  the terminal vertex sequence of $\coding{e}$ as the \emph{terminal
    vertex sequence} for $(g_0, \coding{e})$, and similarly for label
  sequences and initial vertex sequences.

  Note that any strict coding can be viewed as a generalized coding by
  taking the initial point $g_0$ to be the identity. Occasionally we
  will refer to generalized $\mc{G}$-codings as ``$\mc{G}$-codings,''
  or even just ``codings'' if $\mc{G}$ is understood from
  context.
\end{definition}

\begin{lemma} \label{lem:codings_exist} Every point in $\bgamp$ has a
  strict $\mc{G}$-coding.
\end{lemma}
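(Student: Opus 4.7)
The plan is to construct a coding by a direct recursion, using the cover property $\bgamp = \bigcup_{z \in Z} V(z)$ at each step. Set $\zeta_0 := \zeta$. Assuming $\zeta_k \in \bgamp$ has been defined, choose some $z_k \in Z$ with $\zeta_k \in V(z_k)$. If $z_k$ is parabolic and $\zeta_k = z_k$, halt the recursion. Otherwise, property~\ref{item:edgedefinition} of Proposition~\ref{prop:C_properties} lets us pick some $\alpha \in L(z_k)$ with $\zeta_k \in \alpha\hat V(z_k)$: for conical $z_k$, $L(z_k) = \{\alpha_{z_k}\}$ and $V(z_k) = \alpha_{z_k}\hat V(z_k)$ (the singleton $\{z_k\}$ in~\ref{item:edgedefinition} is absorbed into the open set $\alpha_{z_k}\hat V(z_k)$), so $\alpha = \alpha_{z_k}$ is forced; for parabolic $z_k$, the hypothesis $\zeta_k \neq z_k$ together with the decomposition $V(z_k) = \{z_k\} \cup \bigcup_{\alpha \in L(z_k)}\alpha\hat V(z_k)$ supplies a valid $\alpha$. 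Set $\alpha_{k+1} := \alpha$ and $\zeta_{k+1} := \alpha^{-1}\zeta_k \in \hat V(z_k)$, and continue.

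If the recursion halts at some step $n \geq 0$, the output is the finite edge path $e_1, \ldots, e_n$ with $\head(e_k) = z_{k-1}$, $\tail(e_k) = z_k$, $\lab(e_k) = \alpha_k$; when $n = 0$ this is the length-zero coding at the parabolic vertex $z_0$. Each $e_k$ is a genuine edge of $\mc G$ because $\zeta_k$ lies in $\hat V(z_{k-1}) \cap V(z_k)$, and $\alpha_k \in L(z_{k-1})$ by construction. An immediate induction yields $\zeta = \alpha_1 \cdots \alpha_k \zeta_k$ for every $k$, so in particular $\zeta = \alpha_1\cdots\alpha_n z_n$ with $z_n$ parabolic, which is precisely the strict parabolic coding condition.

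If the recursion never halts, the edges $(e_k)_{k \in \bN}$ form an infinite path in $\mc G$ (again valid since $\zeta_k \in \hat V(z_{k-1}) \cap V(z_k)$ for every $k$). The same inductive identity $\zeta = \alpha_1 \cdots \alpha_k \zeta_k$, together with $\zeta_k \in V(z_k) \subset W(z_k) \subset \overline{W(z_k)}$, shows that $\zeta \in \alpha_1\cdots\alpha_k\overline{W(z_k)}$ for every $k \geq 0$. Hence $\zeta$ lies in $\bigcap_{k \geq 0}\alpha_1\cdots\alpha_k\overline{W(z_k)}$, which is the coding condition of Definition~\ref{def:coding}.

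There is no real obstacle; the proof is a routine recursion driven entirely by the cover property and the decomposition in~\ref{item:edgedefinition}. The one conceptual point worth flagging is that a parabolic point $\zeta$ need not receive a parabolic coding from this procedure: the recursion may produce an infinite conical coding of $\zeta$ instead. That is not a problem, since Definition~\ref{def:coding} accepts either output as a valid $\mc G$-coding.
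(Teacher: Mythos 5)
Your proof is correct and follows essentially the same recursion as the paper's own argument: use the cover to choose $z_k$, use the decomposition in~\ref{item:edgedefinition} to extract the next label $\alpha_{k+1} \in L(z_k)$, halt if and only if $z_k$ is parabolic and $\zeta_k = z_k$, and check the coding condition from the invariant $\zeta = \alpha_1\cdots\alpha_k\zeta_k$ and the nesting $V(z_k)\subset W(z_k)$. Your closing remark that a parabolic point might receive an infinite coding is the right level of caution for this lemma (the paper's proof handles both outcomes the same way), though in fact Corollary~\ref{cor:conical}, proved later, shows that an infinite coding always codes a conical point, so the recursion must terminate on parabolic input.
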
 

\begin{proof} 
  Fix a point $\zeta \in \bgamp$.  Since the $V(z)$ cover $\bgamp$, we
  can choose $z_0$ so that $\zeta \in V(z_0)$.  If $\zeta = z_0$ and
  $z_0$ is parabolic, we stop.  The length-zero edge path consisting
  of the vertex $z_0$ is a strict parabolic coding of $\zeta$.

  Otherwise, we inductively define a strict $\mc{G}$-coding as
  follows. Assume that we have already defined a (possibly empty) edge
  path  $e_1, \ldots e_n$ starting at the point $z_0$ chosen in the last paragraph, and 
  with $\tail(e_k)=z_k$ for each $k>0$.
  Set
  $\alpha_k = \lab(e_k)$ and make the inductive hypothesis that
  \[
    \zeta_n := \alpha_n^{-1} \cdots \alpha_1^{-1}\zeta \in V(z_n).
  \]
By Proposition \ref{prop:C_properties}, we have $V(z_n)\subset W(z_n)$ and thus 
  \[
    \zeta \in \alpha_1 \cdots \alpha_n W(z_n).
  \]
  There are now two possibilities. If $z_n$ is not a parabolic
  point, then we let $\alpha_{n+1}$ be the only element in the
  singleton set $L(z_n)$. Then define
  $\zeta_{n+1} = \alpha_{n+1}^{-1}\zeta_n$, which lies in
  $\alpha_{n+1}^{-1}V(z_n) = \hat{V}(z_n)$. Since the sets $V(z)$ cover, 
  we can find some
  $z_{n+1} \in Z$ so that $\zeta_{n+1} \in V(z_{n+1})$.  Then there is
  an edge $e_{n+1}$ between $z_{n}$ and $z_{n+1}$ because
  $\hat{V}(z_{n})$ and $V(z_{n+1})$ have nonempty intersection.

  If instead $z_n$ is a parabolic point, then we may have that
  $\zeta = \alpha_1 \ldots \alpha_{n} z_{n}$, in which case we are
  done and have found a strict parabolic coding of $\zeta$.
  Otherwise, by property \ref{item:edgedefinition} of our open covers,
  there is some $\alpha_{n+1} \in L(z_{n})$ so that
  $\alpha_{n+1}^{-1}(\zeta_n) \in \hat{V}(z_{n})$.  Define
  $\zeta_{n+1} = \alpha_{n+1}^{-1}\zeta_n$, as above, pick some
  $z_{n+1}$ so that $\zeta_{n+1} \in V(z_{n+1})$.  There is an edge
  $e_{n+1}$ between $z_{n}$ and $z_{n+1}$ because $\hat{V}(z_{n})$ and
  $V(z_{n+1})$ have nonempty intersection.

Thus, if the inductive procedure terminates at a finite stage, we have produced a parabolic coding of $\zeta$.  Otherwise, by construction we produce an infinite edge path with labels $\alpha_k$ such that 
$\zeta \in \bigcap_n \alpha_1 \cdots \alpha_n \overline{W(z_{n})}$, as desired. 
\end{proof}

It turns out that conical $\mc{G}$-codings actually code (unique) {\em
  conical} points in $\bgamp$, although this is not obvious from the
lemma above. We will prove this fact later in Corollary
\ref{cor:conical}. We will also eventually show that conical codings
determine sequences in $\Gamma$ that are well-defined up to some
bounded error (Lemma \ref{lem:codings_bounded_dist}).

To prove these two facts, we first show that codings define sequences
of elements that stay close to geodesic rays in the cusped space
$X$. We use the following basic fact about hyperbolic metric spaces.
The statement is a rephrasing of Lemma 3.2 of \cite{MMW1}, and we
refer the reader there for a proof.
\begin{lemma}[See \cite{MMW1} Lemma 3.2]
  \label{lem:triangle_tripod}
  Let $X$ be a proper $\delta$-hyperbolic metric space, and fix a metric
  $d_\partial$ on the Gromov boundary $\partial X$ and a basepoint
  $x_0 \in X$. For any $\epsilon_0 > 0$ and any $R_1 > 0$, there
  exists a constant $R_2$ satisfying the following.

  Let $z_1, z_2, z_3$ be three points in $\partial X$, and for each
  $i,j$ distinct in $\{1,2,3\}$ let $[z_i, z_j]$ be a geodesic joining
  $z_i$ to $z_j$. If the distance between $z_i$ and $z_j$ is at least
  $\epsilon_0$ for each distinct pair $z_i, z_j$, then the
  intersection
  \[
    N_{R_1}([z_1, z_2]) \cap N_{R_1}([z_1, z_3])
  \]
  is contained in the $R_2$-neighborhood of a geodesic ray from $x_0$
  to $z_1$.
\end{lemma}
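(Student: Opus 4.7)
The plan is to convert the statement about bi-infinite geodesic triangles into a statement about the ideal triangle with vertex at $x_0$, exploiting that a positive visual distance at infinity corresponds to a bounded Gromov product based at $x_0$.

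\textbf{Step 1: visual distance to Gromov product.} Since the visual metric $d_\partial$ is comparable to $\exp(-\epsilon \cdot (\,\cdot\,|\,\cdot\,)_{x_0})$ for some $\epsilon>0$ (or, more weakly, satisfies an inequality $(y|z)_{x_0}\le\Phi(d_\partial(y,z))$ for some nondecreasing function $\Phi$), the hypothesis $d_\partial(z_i,z_j)\ge\epsilon_0$ yields a uniform constant $C_0=C_0(\epsilon_0)$ such that $(z_i|z_j)_{x_0}\le C_0$ for every pair $i\neq j$.

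\textbf{Step 2: bi-infinite geodesics are close to rays from $x_0$.} Given a bi-infinite geodesic $[z_i,z_j]$ with $(z_i|z_j)_{x_0}\le C_0$, a standard thin-triangles argument (applied to the ideal triangle with vertices $x_0,z_i,z_j$) shows that $[z_i,z_j]$ is contained in the $C_1$-neighborhood of $[x_0,z_i]\cup[x_0,z_j]$ for some $C_1=C_1(\delta,C_0)$, and that the ``center'' of this ideal triangle lies within bounded distance of $x_0$. Moreover, the portion of $[z_i,z_j]$ lying beyond its nearest point to $x_0$ on the $z_i$-side fellow-travels a chosen ray $[x_0,z_i]$ within $C_1$ of it. (This is the usual description of ideal triangles in a $\delta$-hyperbolic space as tripods.)

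\textbf{Step 3: locating a double-neighborhood point.} Let $p\in N_{R_1}([z_1,z_2])\cap N_{R_1}([z_1,z_3])$, with nearest points $p_2\in[z_1,z_2]$ and $p_3\in[z_1,z_3]$. By Step 2 applied to $[z_1,z_2]$, there is a point $q_2$ lying on either $[x_0,z_1]$ or $[x_0,z_2]$ with $d(p,q_2)\le R_1+C_1$; similarly there is $q_3$ on $[x_0,z_1]$ or $[x_0,z_3]$ within $R_1+C_1$ of $p$. If either $q_2$ or $q_3$ lies on $[x_0,z_1]$, we are done with $R_2 := R_1+C_1$. Otherwise $q_2\in [x_0,z_2]$ and $q_3\in[x_0,z_3]$, so $p$ lies within $R_1+C_1$ of both rays $[x_0,z_2]$ and $[x_0,z_3]$. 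Since $(z_2|z_3)_{x_0}\le C_0$, any point simultaneously close to these two rays must have $d(x_0,p)\le C_0 + C_2(\delta, R_1+C_1)$ for some explicit constant $C_2$. In this bounded case, $p$ is trivially within $d(x_0,p)$ of any ray from $x_0$ to $z_1$. Setting $R_2$ to be the maximum of $R_1 + C_1$ and this bounded distance finishes the proof.

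\textbf{Main obstacle.} The most delicate part is Step 2: one needs to carefully identify the ``turnaround point'' on $[z_i,z_j]$ and show that past this point the geodesic fellow-travels $[x_0,z_i]$. This is the standard ideal-triangle picture in $\delta$-hyperbolic spaces, but writing it down rigorously requires a bit of care with Gromov products and with the choice of basepoint. Once Step 2 is in hand, Step 3 is essentially a case analysis driven by the bounded Gromov product from Step 1.
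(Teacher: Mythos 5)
The paper itself gives no proof of this lemma: it explicitly defers to \cite{MMW1}, Lemma~3.2. So there is no in-paper argument to compare against; I can only assess your sketch on its own merits.

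Your argument is the standard one, and it is essentially correct. Step~1 translates a lower bound on boundary distance into an upper bound on Gromov products, which is the right move (note that the hypothesis only says ``fix a metric $d_\partial$,'' not a visual metric; if $d_\partial$ is merely any metric inducing the boundary topology, you get the same conclusion via compactness of $\partial X$, since $X$ is proper). Step~2 is the thinness of ideal triangles with one finite vertex, which holds with a constant of the form $C_1 = C_1(\delta)$. Step~3 is a clean trichotomy, and the case where $p$ lies near both $[x_0,z_2]$ and $[x_0,z_3]$ is handled correctly: if two rays from $x_0$ have Gromov product at most $C_0$, any point within distance $R$ of both must lie within $C_0 + C_2(\delta, R)$ of $x_0$, and a point that close to $x_0$ is automatically close to any ray emanating from $x_0$. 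The final $R_2$ is the max of the two bounds.

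One small caution: in Step~2 you assert more than you use. The fellow-traveling description and the location of the ``center'' are not needed; the only input to Step~3 is the containment $[z_i,z_j] \subset N_{C_1}([x_0,z_i]\cup[x_0,z_j])$. You might as well state only that, since it is the claim whose proof is routine, whereas the more precise tripod picture would require a bit more bookkeeping to make rigorous. With that trim, the sketch fleshes out to a complete and correct proof along what is almost certainly the same route as \cite{MMW1}.
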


Now, using the strategy of Lemma 3.4 in \cite{MMW1}, we show the following. 

\begin{lemma} \label{lem:code_is_qg} There exists a uniform constant
  $R > 0$ so that, for any strict coding $\coding{e}$ of any point
  $\zeta \in \bgamp$, if $(\alpha_k)_{k \in I}$ is the associated
  label sequence, then the sequence
  \[
    g_k := \alpha_1 \cdots \alpha_k
  \]
  lies in the $R$-neighborhood of any geodesic ray in the cusped space
  $X$ based at the identity in $\Cay(\Gamma) \subset X$ and with
  endpoint $\zeta \in \partial X = \bgamp$. 
\end{lemma}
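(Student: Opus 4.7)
The plan is to reduce the estimate $d_X(g_k, [1,\zeta]) \leq R$, via the isometric $\Gamma$-action on $X$, to a statement about the location of $1$ relative to geodesics with endpoint $\zeta_k := g_k^{-1}\zeta$, and then invoke Lemma~\ref{lem:triangle_tripod}.

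First, using that $g_k$ acts by isometries on $X$ and $g_k^{-1}[1,\zeta] = [g_k^{-1}, \zeta_k]$, the desired bound $d_X(g_k, [1,\zeta]) \leq R$ is equivalent to $d_X(1, [g_k^{-1}, \zeta_k]) \leq R$. So it suffices to produce a uniform upper bound on this latter quantity. By Definition~\ref{def:coding} and the inductive construction in Lemma~\ref{lem:codings_exist}, at every index $k$ in the (conical or parabolic) index set $I$ we have $\zeta_k \in V(z_k)$; this holds even at the terminal vertex in the finite parabolic case, since $V(z_n)$ is a neighborhood of $z_n$. In particular, $\zeta_k$ lies in one of the finitely many sets $\{V(z) : z \in Z\}$, each of $\dvis$-diameter less than $\bigepsilon$.

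Next, I would fix, for each $z \in Z$, a pair of auxiliary boundary points $\mu(z), \nu(z) \in \bgamp$ so that $V(z)$, $\mu(z)$, $\nu(z)$ are pairwise $\dvis$-separated by a uniform constant $\epsilon_0 > 0$. Such a choice is possible because $Z$ is finite, each $V(z)$ has $\dvis$-diameter less than $\bigepsilon$, and $\bgamp$ is a compact perfect metric space of positive $\dvis$-diameter. For each $k$, the three boundary points $\zeta_k$, $\mu(z_k)$, $\nu(z_k)$ are then pairwise $\dvis$-separated by at least $\epsilon_0$. Since $\{(\xi,\eta) \in \bgamp \times \bgamp : \dvis(\xi,\eta) \geq \epsilon_0\}$ is compact and the function $(\xi,\eta) \mapsto d_X(1,[\xi,\eta])$ is continuous on distinct pairs (as $X$ is a proper hyperbolic space and geodesics between distinct boundary points are well defined up to bounded Hausdorff distance), this function is uniformly bounded by some $R_1$ on the compact set above. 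Hence $1 \in N_{R_1}([\zeta_k, \mu(z_k)]) \cap N_{R_1}([\zeta_k, \nu(z_k)])$.

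Finally, applying Lemma~\ref{lem:triangle_tripod} with basepoint $x_0 = g_k^{-1}$, first boundary vertex $z_1 = \zeta_k$, and the pairwise $\epsilon_0$-separated auxiliary vertices $\mu(z_k), \nu(z_k)$, the lemma furnishes a uniform $R_2$ with $1 \in N_{R_2}([g_k^{-1}, \zeta_k])$. Pushing forward by the isometry $g_k$ gives $g_k \in N_{R_2}([1, \zeta])$, yielding the desired uniform $R := R_2$. The main technical point will be verifying that auxiliary points $\mu(z), \nu(z)$ can be selected uniformly and that $d_X(1, [\xi,\eta])$ is uniformly bounded on $\dvis$-separated pairs; both rest on standard compactness and continuity arguments in the proper hyperbolic space $X$, rather than on finer geometric properties of the automaton itself.
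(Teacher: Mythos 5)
Your reduction to $d_X(1,[g_k^{-1},\zeta_k]) \le R$ is correct, but the way you then invoke Lemma~\ref{lem:triangle_tripod} has a genuine gap. That lemma fixes the basepoint $x_0$ \emph{together with} the visual metric $d_\partial$ before producing $R_2$; the constant $R_2$ depends on both. The metric $\dvis$ we are working with is tied to the fixed basepoint $\idgamma$, so applying the lemma ``with $x_0 = g_k^{-1}$'' does not furnish a single constant valid for all $k$: as $g_k^{-1}$ drifts off to infinity, the resulting $R_2$ would have to grow. Concretely, the lemma's mechanism is that the $\epsilon_0$--separation of the three boundary points (measured in the $x_0$--based visual metric) pins the tripod center near $x_0$; when the separation is instead measured in $\dvis$ (based at $\idgamma$), it pins the center near $\idgamma$, not near $g_k^{-1}$. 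The only legitimate application with your hypotheses is with $x_0 = \idgamma$, which yields the tautology $\idgamma \in N_{R_2}([\idgamma,\zeta_k])$ and says nothing about $g_k^{-1}$.

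The deeper issue is that your argument never actually constrains $g_k^{-1}$. You use only that $\zeta_k$ lies in a small set and choose auxiliary points $\mu(z_k),\nu(z_k)$ in terms of $z_k$ alone; none of this involves $g_k^{-1}$. What is missing is precisely the dynamical input from the coding: by repeated application of the edge inclusion \eqref{eq:edge_inclusions}, one has $g_kN_\bigepsilon(W(z_k)) \subset W(z_0)$, hence $g_k^{-1}(\bgamp \setminus W(z_0)) \subset \bgamp \setminus N_\bigepsilon(W(z_k))$. This is what forces two \emph{fixed} points $\zeta_\pm \notin W(z_0)$ (chosen once, depending only on $\zeta$) to satisfy $\dvis(g_k^{-1}\zeta_\pm, g_k^{-1}\zeta) \ge \bigepsilon$, so that the fixed geodesics $[\zeta_\pm,\zeta]$ pass within $R_1$ of every $g_k$. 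One then applies Lemma~\ref{lem:triangle_tripod} a single time, with the fixed basepoint $\idgamma$ and the fixed triple $(\zeta,\zeta_-,\zeta_+)$, to conclude. This is the route the paper takes; your compactness argument for the existence of $R_1$ and the auxiliary points is fine, but it must be combined with the nesting property to get any control on $g_k$. (A separate minor slip: for a general strict coding one only has $\zeta_k \in \overline{W(z_k)}$, not $\zeta_k \in V(z_k)$; the inductive construction in Lemma~\ref{lem:codings_exist} produces \emph{a} coding with that extra property, but Lemma~\ref{lem:code_is_qg} must hold for all strict codings. This does not affect the small-diameter point, but is worth correcting.)
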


\begin{proof}
  Let $\epsilon_0 > 0$ be small enough so that
  $\epsilon_0 < \bigepsilon$, and so that for every $z \in \bgamp$, there
  exist points $z_+, z_- \in \bgamp$ so that
  \[
    \dvis(z, z_\pm) > \bigepsilon, \quad
    \dvis(z_+, z_-) > \epsilon_0.
  \]
  That such an $\epsilon_0$ exists follows from an easy geometric
  argument using the fact that $\bigepsilon < D/5$ (recall $D$ is the
  constant from Definition~\ref{def:pair_separation_constant}); see
  Lemma 3.1 in \cite{MMW1} for a proof.  We also choose a constant
  $R_1 > 0$ so that, for any pair of points $a, b \in \bgamp$ such
  that $\dvis(a, b) \ge \bigepsilon$, any geodesic in $X$ joining $a$ to
  $b$ passes within an $R_1$-neighborhood of the identity.
  
  Now fix $\zeta \in \bgamp$, and let $\coding{e}$
  be a strict $\mc{G}$-coding for $\zeta$, with label sequence
  $(\alpha_k)_{k \in I}$ and terminal vertex sequence
  $(z_k)_{k \in I}$. We define $z_0$ to be the initial vertex of the
  coding, and let $g_k := \alpha_1 \cdots \alpha_k$. For all
  $k \in \Izero$, we write $W_k$ for $W(z_k)$. Since
  $\coding{e}$ is a coding for $\zeta$ we know $\zeta \in W_0$.

  Choose points $\zeta_\pm \in \bgamp$ so that
  $\dvis(\zeta, \zeta_-) > \bigepsilon$,
  $\dvis(\zeta, \zeta_+) > \bigepsilon$, and
  $\dvis(\zeta_-, \zeta_+) > \epsilon_0$. Condition
  \ref{item:smalldiam} implies that the diameter of $W_0$ is less than
  $\bigepsilon$, so we know that $\zeta_+$ and $\zeta_-$ both lie in
  $\bgamp \minus W_0$. Let $[\zeta_-, \zeta]$ be a geodesic in $X$
  from $\zeta_-$ to $\zeta$ and $[\zeta_+, \zeta]$ a geodesic from
  $\zeta_+$ to $\zeta$.

  The edge inclusion condition \eqref{eq:edge_inclusions}, together
  with the fact that $\coding{e}$ is a strict coding for $\zeta$,
  implies that $g_k^{-1}\zeta \in W_k$ and
  $g_kN_{\bigepsilon}(W_k) \subset W_0$ for every $k$. Thus
  $g_k^{-1}(\bgamp \minus W_0)$ is a subset of
  $\bgamp \minus N_\bigepsilon(W_k)$, so $g_k^{-1}\zeta_-$ lies in
  $\bgamp \minus N_{\bigepsilon}(W_k)$ and therefore
  $\dvis(g_k^{-1}\zeta_-, g_k^{-1}\zeta) \ge \bigepsilon$.

  Thus, by our choice of $R_1$, the geodesic
  $g_k^{-1}[\zeta_-, \zeta]$ enters an $R_1$-neigh\-bor\-hood of the
  identity.  Equivalently, $g_k$ lies in the $R_1$-neighborhood of the
  geodesic $[\zeta_-, \zeta]$. The same argument also shows that $g_k$
  lies in an $R_1$-neighborhood of the geodesic $[\zeta_+,
  \zeta]$. Now apply Lemma \ref{lem:triangle_tripod} with
  $z_1 = \zeta$, $z_2 = \zeta_-$, and $z_3 = \zeta_+$ to see that
  there is a constant $R_2$ (independent of $\zeta$ and $\coding{e}$)
  so that $g_k$ lies in the $R_2$-neighborhood of some geodesic ray
  from the identity to $\zeta$.  Setting $R = R_2 + 2\delta$, we
  conclude that all $g_k$ lie in the $R$-neighborhood of any such
  geodesic ray.
\end{proof}

Although the previous lemma only applies directly to strict
codings, we can use it to obtain a statement for generalized codings
as well. Here and in what follows, we use the notation $|\alpha|_X$
for $\dcusp(\alpha, \idgamma)$, where $\idgamma$ is the image of the
identity element of $\Gamma$ in $\Cay(\Gamma) \subset X$.

\begin{corollary}
  \label{cor:generalized_code_is_qg}
  Let $R > 0$ be the constant from Lemma~\ref{lem:code_is_qg}, and let
  $(g_0, \coding{e})$ be a generalized $\mc{G}$-coding of a point
  $\zeta \in \bgamp$, with label sequence $(\alpha_k)_{k \in I}$. Then
  the sequence
  \[
    g_k := g_0 \cdot \alpha_1 \cdots \alpha_k
  \]
  lies in the $(R + |g_0|_X + 2\delta)$-neighborhood of any
  geodesic ray in $X$ from $\idgamma$ to $\zeta$.
\end{corollary}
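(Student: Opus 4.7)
The plan is to reduce to Lemma~\ref{lem:code_is_qg} using $\Gamma$--equivariance, and then to compare two geodesic rays in $X$ that share the common ideal endpoint $\zeta$, where the cost of comparison comes from the basepoint shift of $|g_0|_X$.

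First, by the definition of a generalized coding, $\coding{e}$ is a strict $\mc{G}$--coding of the point $\xi := g_0^{-1}\zeta$. Applying Lemma~\ref{lem:code_is_qg} directly to $\coding{e}$ gives that the sequence $h_k := \alpha_1 \cdots \alpha_k$ lies in the $R$--neighborhood of any geodesic ray in $X$ from $\idgamma$ to $\xi$. Since $\Gamma$ acts by isometries on $X$, translating by $g_0$ shows that the sequence $g_k = g_0 h_k$ lies in the $R$--neighborhood of any geodesic ray $\gamma'$ from the vertex $g_0 \cdot \idgamma$ to $\zeta = g_0\xi$.

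It remains to show that any such geodesic ray $\gamma'$ from $g_0 \cdot \idgamma$ to $\zeta$ is contained in the $(|g_0|_X + 2\delta)$--neighborhood of any chosen geodesic ray $\gamma$ from $\idgamma$ to $\zeta$. For this I would form the ideal triangle with vertices $\idgamma$, $g_0 \cdot \idgamma$, and $\zeta$, whose three sides are $\gamma$, $\gamma'$, and a geodesic segment $\sigma$ of length $|g_0|_X$ joining the two finite vertices. Since $X$ is $\delta$--hyperbolic and proper, approximating by finite triangles $\triangle(\idgamma,\, g_0 \cdot \idgamma,\, \gamma'(n))$ and letting $n \to \infty$ (via Arzel\`a--Ascoli) shows that this ideal triangle is $2\delta$--slim. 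Hence any point $p$ on $\gamma'$ lies within $2\delta$ of $\gamma \cup \sigma$: if $p$ is within $2\delta$ of $\gamma$, we are done, and if $p$ is within $2\delta$ of $\sigma$, then $p$ is within $2\delta + |g_0|_X$ of $\idgamma$, which itself lies on $\gamma$.

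Combining the two bounds, each $g_k$ lies in the $(R + |g_0|_X + 2\delta)$--neighborhood of $\gamma$, as claimed. The only non-cosmetic issue is justifying the $2\delta$--slimness of the ideal triangle, but that is a standard consequence of $\delta$--hyperbolicity plus properness; no further input from the relatively hyperbolic structure or the geometry of horoballs is needed here.
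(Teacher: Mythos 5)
Your overall reduction matches the paper's: apply Lemma~\ref{lem:code_is_qg} to the strict coding $\coding{e}$ (which codes $g_0^{-1}\zeta$), translate by $g_0$ to get the $R$--neighborhood of a ray from $g_0$ to $\zeta$, and then pay an extra $|g_0|_X + 2\delta$ to compare that ray with a ray from $\idgamma$ to $\zeta$. Both proofs rest on the same thin-polygon idea for that comparison step.

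Where you diverge is in how you justify the comparison. The paper works with a finite quadrilateral with vertices $\idgamma$, $g_0$, a far point on $\gamma'$, and a far point on $\gamma$, so both of the chosen rays $\gamma$ and $\gamma'$ appear explicitly as sides, and $2\delta$--slimness of quadrilaterals then gives the bound directly. You instead try to pass to the limit of the finite triangles $\triangle(\idgamma,\,g_0,\,\gamma'(n))$ via Arzel\`a--Ascoli. The gap is that the limit of the geodesics $[\idgamma,\gamma'(n)]$ need not be the arbitrary ray $\gamma$ you fixed in the statement; it is merely \emph{some} geodesic ray $\tilde\gamma$ from $\idgamma$ to $\zeta$. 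As written, your limiting argument only gives $\delta$--slimness of the ideal triangle with $\tilde\gamma$ as the side opposite $g_0$, and you would then need a separate bound on $\dHaus(\gamma,\tilde\gamma)$ to transfer this to the given $\gamma$. Naively this costs an additional $\delta$ or $2\delta$ and lands you at $R+|g_0|_X+3\delta$ rather than the stated $R+|g_0|_X+2\delta$. The conclusion of the corollary is still correct with the stated constant (the finite quadrilateral argument shows the ideal triangle really is $2\delta$--slim with respect to \emph{any} choice of the side $\gamma$), but your Arzel\`a--Ascoli sketch does not directly establish that; either switch to the quadrilateral argument, or add the missing step comparing $\tilde\gamma$ to $\gamma$ and accept a slightly weaker constant. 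Since all later uses of this corollary only need \emph{some} uniform bound, the weaker constant would still suffice for the rest of the paper.
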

\begin{proof}
  Lemma \ref{lem:code_is_qg} immediately implies that the sequence
  $(g_k)_{k \in \Izero}$ lies within an
  $R$-neighborhood of a geodesic $[g_0, \zeta]$ in $X$ from $g_0$ to
  $\zeta$. However, this geodesic lies within distance
  $|g_0|_X + 2\delta$ of any geodesic $[\idgamma, \zeta]$ in $X$ from
  $\idgamma$ to $\zeta$. To see this, simply make a long quadrilateral
  with one side a geodesic segment from $\idgamma$ to $g_0$, two sides
  long subsegments of the rays from $\idgamma$ and $g_0$ to $\zeta$,
  and the fourth side a short path between these rays far from
  $\idgamma$.  That quadrilaterals are $2 \delta$--slim now implies
  that $[g_0, \zeta]$ lies within the $R + 2\delta$ neighborhood of
  $[\idgamma, \zeta]$, and vice versa.
\end{proof}

The sequence $(g_k)_{k \in \Izero}$ from
Lemma~\ref{lem:code_is_qg} and
Corollary~\ref{cor:generalized_code_is_qg} will make many appearances,
so we make the following definition.

\begin{definition}
  If $(g_0, \coding{e})$ is a generalized coding with label sequence
  $(\alpha_k)_{k \in I}$, we call the sequence
  $(g_k)_{k \in \Izero}$ defined by
  $g_k := g_0 \cdot \alpha_1 \cdots \alpha_k$ the \emph{quasi-geodesic
    sequence associated to $(g_0, \coding{e})$}.
\end{definition}

\begin{remark}
The terminology ``quasi-geodesic sequence'' comes from the fact that
$(g_k)_{k \in \Izero}$ lies bounded distance from a geodesic in the
cusped space $X$, and behaves like a quasi-geodesic in the
\emph{relative Cayley graph} for $\Gamma$ (i.e. the Cayley graph for
$\Gamma$ defined with respect to the infinite generating set
$\mc{S} \cup \bigcup_{p \in \Pi} \Gamma_p$).  However, note that
$(g_k)_{k \in \Izero}$ may not be a quasi-geodesic in $X$, since the
distances $\dcusp(g_k, g_{k+1})$ may be arbitrarily large.  This complicating issue is the reason for much of the technical work in the next section.  

At this point, we have only shown (via Lemma~\ref{lem:code_is_qg})
that the associated quasi-geodesic sequence $(g_k)_{k \in \Izero}$
stays in a uniform neighborhood of a geodesic ray in $X$. To see that
the sequence actually follows the ray to infinity, we will use an
argument from Lemma 2.11 in \cite{MMW1}.
\end{remark}
\begin{lemma}[Bounded backtracking I]
  \label{lem:boundedbacktrack_I}
  Let $(g_0, \coding{e})$ be a generalized coding with terminal vertex
  sequence $(z_{k})_{k \in I}$ and associated quasi-geodesic sequence
  $(g_k)_{k \in \Izero}$. For every $k$, there is a proper inclusion
  \[
    g_{k+1}W(z_{k+1}) \subsetneq g_kW(z_{k}).
  \]
  Moreover, no element in $(g_k)_{k \in \Izero}$ is repeated more than
  $\# Z$ times.
\end{lemma}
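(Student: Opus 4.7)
The plan is to derive both assertions directly from the edge inclusion condition \eqref{eq:edge_inclusions} established in Proposition~\ref{prop:proper_nesting}, and from the observation that $\Gamma$ acts on $\bgamp$ by homeomorphisms, hence preserves proper set inclusions.

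First I would handle the proper nesting statement. By definition of a generalized coding, the edge $e_{k+1}$ in $\coding{e}$ goes from the vertex $z_k$ to $z_{k+1}$ with label $\alpha_{k+1}$. Proposition~\ref{prop:proper_nesting} applied to this edge yields
\[
\alpha_{k+1}\,\overline{N}_\bigepsilon(W(z_{k+1})) \subsetneq \alpha_{k+1}\hat{W}(z_k) \subset W(z_k),
\]
and in particular $\alpha_{k+1}W(z_{k+1}) \subsetneq W(z_k)$. Multiplying on the left by the homeomorphism $g_k$ and using $g_{k+1} = g_k\alpha_{k+1}$ gives the desired proper inclusion $g_{k+1}W(z_{k+1}) \subsetneq g_kW(z_k)$.

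For the second assertion, suppose some group element $g \in \Gamma$ occurs at indices $k_1 < k_2 < \cdots < k_m$ in the sequence $(g_k)_{k \in \Izero}$. Iterating the proper inclusion just established yields
\[
g_{k_1}W(z_{k_1}) \supsetneq g_{k_2}W(z_{k_2}) \supsetneq \cdots \supsetneq g_{k_m}W(z_{k_m}).
\]
Since $g_{k_i} = g$ for each $i$, applying $g^{-1}$ gives the strict chain $W(z_{k_1}) \supsetneq W(z_{k_2}) \supsetneq \cdots \supsetneq W(z_{k_m})$. In particular the sets $W(z_{k_i})$ are pairwise distinct, so the vertices $z_{k_i}$ are pairwise distinct elements of $Z$, forcing $m \le \#Z$.

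I do not expect any real obstacle here: the main content of the lemma has essentially been packaged into Proposition~\ref{prop:proper_nesting}, and both parts reduce to an application of that proposition plus the elementary observation that a strict descending chain of subsets of $Z$-indexed open sets has length at most $\#Z$.
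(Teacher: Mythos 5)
Your proposal is correct and takes essentially the same approach as the paper: the proper inclusion follows immediately from Proposition~\ref{prop:proper_nesting} by applying the homeomorphism $g_k$, and the bound on repetitions is the pigeonhole observation that the strictly nested chain of $W$-sets forces the associated vertices to be pairwise distinct (the paper phrases this as a contradiction, you phrase it directly, but it is the same argument).
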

\begin{proof}
  The first statement follows directly from
  Proposition~\ref{prop:proper_nesting}. Given this, we now prove the
  second statement. Suppose for a contradiction that for some
  $g \in \Gamma$ we have $\#\{k \st g_k = g\} > \#Z$. Then there are
  distinct $k, k' \in \bN$ such that $g_k = g_{k'} = g$ and
  $z_k = z_{k'}$. But then $g_kW(z_{k}) = g_{k'}W(z_{k'})$, which
  contradicts the proper inclusion already established.
\end{proof}

\begin{corollary} \label{cor:conical} Every conical
  $\mc{G}$--coding codes a unique conical limit point.
\end{corollary}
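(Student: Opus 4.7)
The plan is to verify three properties of an arbitrary conical $\mc{G}$-coding in turn: that it codes at least one point, at most one point, and that the resulting point is a conical limit point. Since applying $g_0 \in \Gamma$ to a strict coding translates its coded set by $g_0$ and preserves conicality of boundary points, it suffices to handle the strict case. Fix a strict conical $\mc{G}$-coding $\coding{e}$ with label sequence $(\alpha_k)_{k \in \bN}$, terminal vertex sequence $(z_k)_{k \in \bN}$, and set $g_k := \alpha_1 \cdots \alpha_k$.

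For existence, Proposition~\ref{prop:proper_nesting} applied to the $k$th edge gives $g_{k+1}\overline{W(z_{k+1})} \subsetneq g_k\overline{W(z_k)}$, producing a nested sequence of nonempty closed subsets of the compact space $\bgamp$; the intersection $\bigcap_k g_k \overline{W(z_k)}$ is therefore nonempty, and any point in it is coded by $\coding{e}$.

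For uniqueness, suppose $\zeta_1, \zeta_2 \in \bigcap_k g_k \overline{W(z_k)}$. By Lemma~\ref{lem:code_is_qg} the sequence $(g_k)$ lies within the $R$-neighborhood of a geodesic ray in $X$ from $\idgamma$ to $\zeta_i$ for both $i=1,2$. By Lemma~\ref{lem:boundedbacktrack_I} no element of $\Gamma$ is repeated more than $\#Z$ times in $(g_k)$, and since the $\Gamma$-action on $X$ is proper this forces $|g_k|_X \to \infty$. Were $\zeta_1 \ne \zeta_2$, the two geodesic rays to distinct boundary points of the $\delta$-hyperbolic space $X$ would both meet the $R$-ball about $g_k$ for arbitrarily large $k$, contradicting the fact that such rays eventually diverge at unbounded rate. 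Hence $\zeta_1 = \zeta_2$.

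For conicality, suppose toward contradiction that the unique coded point $\zeta$ is parabolic, say $\zeta = g p$ with $p \in \Pi$. Any geodesic ray from $\idgamma$ to $\zeta$ eventually enters the horoball based at the coset $gP$, and by Lemma~\ref{lem:gmlemma} is vertical inside it up to bounded Hausdorff error, so the depth along the ray grows linearly to infinity. Since $\depth$ is $1$-Lipschitz on $X$ and each $g_k$ has depth $0$, any point of the ray within distance $R$ of some $g_k$ has depth at most $R$; this confines the closest-point projection of $(g_k)$ to a bounded subsegment of the ray, and hence confines $(g_k)$ itself to a bounded region of $X$, contradicting $|g_k|_X \to \infty$. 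Hence $\zeta$ is non-parabolic, and so is conical by geometric finiteness. This final step is the main obstacle: it requires the finer geometry of the cusped space — specifically, that Cayley graph vertices sit at depth $0$ while rays to parabolic boundary points escape into horoballs at essentially linear rate — rather than only the coarse fellow-traveling packaged in Lemma~\ref{lem:code_is_qg}.
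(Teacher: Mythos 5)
Your proof hits the same three subgoals (existence, uniqueness, conicality) as the paper's and uses the same key inputs, Lemma~\ref{lem:code_is_qg} and Lemma~\ref{lem:boundedbacktrack_I}. The genuine difference is in the last step. The paper settles conicality by invoking the characterization of conical limit points in the cusped space (the cited Prop.~A.2 of Healy--Hruska: a boundary point is conical if and only if some sequence in $\Gamma \subset X$ tending to infinity stays within a bounded distance of a geodesic ray to it). You instead argue directly: if $\zeta$ were parabolic, a geodesic ray to it escapes to unbounded depth in the corresponding horoball, while $(g_k)$ lies at depth $0$, and since depth is $1$-Lipschitz and $|g_k|_X \to \infty$, the sequence cannot track the ray. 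This is correct and is essentially an unpacking of the cited equivalence, so it buys self-containedness at the cost of needing the (standard, but not quite stated in the paper) fact that geodesic rays to a parabolic point eventually enter the horoball and have linearly growing depth. Note that Lemma~\ref{lem:gmlemma} as stated is about finite geodesics with both endpoints in a horoball, not rays to a parabolic point, so it does not directly supply that fact---the Healy--Hruska citation is what covers it in the paper. Your nested-compacta argument for non-emptiness is also fine and makes explicit what the paper dismisses as immediate from the definition.
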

\begin{proof}
  The statement for generalized codings follows immediately from the
  statement for strict codings, so fix a strict conical coding
  $\coding{e}$.  That \emph{some} point $\zeta$ is coded by
  $\coding{e}$ follows immediately from Definition
  \ref{def:coding}. Lemma~\ref{lem:code_is_qg} and
  Lemma~\ref{lem:boundedbacktrack_I} imply that the point $\zeta$ is
  uniquely determined, because the associated quasi-geodesic sequence
  $(g_k)_{k \in \bN}$ tends to infinity in $\Gamma$ and stays in a
  uniform neighborhood of a geodesic ray in $X$ with ideal endpoint
  $\zeta$.  But this is just another way of saying that $\zeta$ is a
  conical limit point (see e.g. \cite[Prop. A.2]{HealyHruska10} for
  the equivalence).

\end{proof}
  
  To clarify the difference between strict parabolic and conical codings, we also note the following.  
  \begin{corollary} \label{cor:para_coding}
  If $\zeta \in \bgamp$ is a parabolic point, then $\zeta$ admits a strictly parabolic $\mc{G}$-coding.  
  \end{corollary} 
  \begin{proof} 
  Let $\zeta$ be a parabolic point.  By Lemma \ref{lem:codings_exist}, there is a strict coding of $\zeta$.  By Corollary  \ref{cor:conical}, this coding must be finite.  By definition, finite strict codings are parabolic strict codings.  
  \end{proof}

\begin{corollary}[Bounded backtracking II]\label{cor:boundedbacktrack_II}
  Let $R$ be the constant from Lemma \ref{lem:code_is_qg}. Suppose
  $(g_0, \coding{e})$ is a generalized conical coding with associated
  quasi-geodesic sequence $(g_k)_{k \in \Izero}$.  For any $m \in \bN$
  and any $n > m$, we have
  $d_X(id, g_n) > d_X(id, g_m) - (3R + 2|g_0|_X +
  6\delta)$. 
\end{corollary}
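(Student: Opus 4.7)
The plan is to apply Lemma~\ref{lem:code_is_qg} to the ``tail'' of the coding starting at index $m$, and then to use $\delta$--hyperbolicity of the cusped space to control how far a geodesic from $g_m$ toward $\zeta$ can dip back toward $\idgamma$.

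First I would verify that the tail $\coding{e}' = (e_{m+1}, e_{m+2}, \ldots)$ is itself a strict conical coding, and that it codes the point $g_m^{-1}\zeta$, where $\zeta$ is the (necessarily conical, by Corollary~\ref{cor:conical}) point coded by $(g_0, \coding{e})$. Both facts are immediate on unwinding the definition of a coding together with the identity $g_m = g_0\alpha_1\cdots\alpha_m$. Applying Lemma~\ref{lem:code_is_qg} to $\coding{e}'$ and then translating by $g_m$, we conclude that for each $n > m$ the element $g_n$ lies within $R$ of some point $q_n$ on a geodesic ray $\gamma_m$ from $g_m$ to $\zeta$ in $X$.

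Next, fix a geodesic ray $\gamma$ from $\idgamma$ to $\zeta$. Corollary~\ref{cor:generalized_code_is_qg} applied to the full coding $(g_0, \coding{e})$ provides a closest point $\gamma(t_m)$ on $\gamma$ to $g_m$ with $d_X(g_m, \gamma(t_m)) \leq R' := R + |g_0|_X + 2\delta$, which forces $|d_X(\idgamma, g_m) - t_m| \leq R'$ and, after a short calculation with Gromov products, $\gprod{\idgamma}{\zeta}{g_m} \leq 2R'$. The geometric heart of the argument is the standard tripod estimate for the ideal geodesic triangle with vertices $\idgamma$, $g_m$, and $\zeta$: every point $q \in \gamma_m$ satisfies
\[
  d_X(\idgamma, q) \;\geq\; d_X(\idgamma, g_m) - \gprod{\idgamma}{\zeta}{g_m} - 2\delta \;\geq\; d_X(\idgamma, g_m) - 2R' - 2\delta.
\]
Specializing to $q = q_n$ and using $d_X(g_n, q_n) \leq R$ gives
\[
  d_X(\idgamma, g_n) \;\geq\; d_X(\idgamma, g_m) - 2R' - 2\delta - R \;=\; d_X(\idgamma, g_m) - (3R + 2|g_0|_X + 6\delta),
\]
which is exactly the claimed inequality.

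The main obstacle is the intuitively clear but quantitatively delicate geometric fact that a geodesic ray from $g_m$ to $\zeta$ cannot dip much closer to $\idgamma$ than $g_m$ itself does, since $g_m$ is already near $\gamma$. This is where $\delta$--hyperbolicity does the real work, and tracking $\delta$--errors for an \emph{ideal} triangle (rather than a finite one) requires a little care. Once that tripod bound is in hand, everything else is bookkeeping.
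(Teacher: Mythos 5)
Your proposal is correct and shares its core idea with the paper's proof: both arguments observe that the tail $\coding{e}'$ of the coding starting at index $m$ is a strict conical coding of $g_m^{-1}\zeta$ and apply Lemma~\ref{lem:code_is_qg} to conclude that $g_n$ tracks a geodesic from $g_m$ toward $\zeta$. Where you diverge is in how you then bound $d_X(\idgamma,g_n)$ from below. The paper translates the fixed ray $\sigma$ from $\idgamma$ to $\zeta$ by $g_m^{-1}$, compares it to the ray tracked by the tail using $2\delta$--slim quadrilaterals, and finishes with additivity of arclength along the subray of $\sigma$ based at a point near $g_m$. You instead argue with the ideal triangle $(\idgamma, g_m, \zeta)$ via a Gromov product/tripod estimate. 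That is a legitimate alternative, and your constants do land on $3R + 2|g_0|_X + 6\delta$. However, you invoke two sub-claims without verification: that $\gprod{\idgamma}{\zeta}{g_m}\le 2R'$ for ideal $\zeta$, and that every point $q$ on $[g_m,\zeta]$ satisfies $d_X(\idgamma,q)\ge d_X(\idgamma,g_m)-\gprod{\idgamma}{\zeta}{g_m}-2\delta$. Both are true, and in fact the first can be sharpened to $\gprod{\idgamma}{\zeta}{g_m}\le R'$ (the Gromov product is bounded above by the distance from the basepoint to any geodesic joining the other two points, including ideal ones), which gives you slack against the possibility that the tripod error for an ideal triangle with only the $\delta$--thin hypothesis costs a bit more than $2\delta$. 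So your argument goes through, but a complete write-up would need to pin down these ideal-triangle constants; the paper's route via the explicit subray of $\sigma$ avoids that bookkeeping.
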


\begin{proof}
  Let $\zeta$ be the point coded by $(g_0, \coding{e})$, and let
  $\sigma$ be a ray in $X$ from the identity to $\zeta$. By
  Corollary~\ref{cor:generalized_code_is_qg}, we have
  $d(g_k, \sigma) < R + |g_0|_X + 2\delta$ for all $k$. Fix indices
  $m < n$, and let $x_m$ be a point along $\sigma$ which is within
  distance $R + |g_0|_X + 2\delta$ of $g_m$.

  Consider the strict conical coding $\coding{e}'$ in $\mc{G}$ given
  by the edge path $(e_{k+m})_{k \in \bN}$. The label sequence for
  this coding is the tail of the label sequence for $\coding{e}$, so
  the associated quasi-geodesic is the sequence $(g_k')_{k \in \bN}$,
  where $g_k' := g_m^{-1}g_{k+m}$. It follows that $\coding{e}'$ codes
  the point $g_m^{-1}\zeta$, hence (by Lemma~\ref{lem:code_is_qg}) the
  sequence $g_k'$ stays within distance $R$ of a geodesic ray
  $\sigma'$ from $\idgamma$ to $g_m^{-1}\zeta$.

  Let $\sigma''$ be the sub-ray of $\sigma$ from $x_m$ to $\zeta$, so
  that $g_m^{-1}\sigma''$ is a ray from $g_m^{-1}x_m$ to
  $g_m^{-1}\zeta$. Since
  $d_X(x_m, g_m) = d_X(g_m^{-1}x_m, \idgamma) < R + |g_0|_X +
  2\delta$, the rays $\sigma'$ and $g_m^{-1}\sigma''$ have Hausdorff
  distance bounded by $R + |g_0|_X + 4\delta$ (see the argument in the
  proof of Corollary~\ref{cor:generalized_code_is_qg}).

  Now, let $x_n'$ be a point on the ray $\sigma'$ so that
  $g_{n - m}' = g_m^{-1}g_n$ lies within distance $R$ of $x_n'$. There
  is a point $x_n''$ on $\sigma''$ so that
  $d_X(g_m^{-1}x_n'', x_n') < R + |g_0|_X + 4\delta$, meaning
  $d_X(g_m^{-1}g_n, g_m^{-1}x_n'') = d_X(g_n, x_n'') \le 2R + |g_0|_X
  + 4\delta$.  Using the fact that $\sigma''$ is a sub-ray of $\sigma$
  based at $x_m$, we have
  \[
    d_X(\idgamma, g_n) + d_X(g_n, x_n'') \ge 
    d_X(\idgamma, x_n'') = d_X(\idgamma, x_m) + d_X(x_m, x_n'') \ge
    d_X(\idgamma, x_m) 
  \]
  We have seen that
  \[  d_X(\idgamma, x_m)  \ge d_X(\idgamma, g_m) - (R + |g_0|_X + 2\delta) \]
  and  
  \[ d_X(g_n, x_n'') \le 2R + |g_0|_X + 4\delta.
  \]
  Combining the above inequalities gives the desired bound
  $d_X(id, g_n) > d_X(id, g_m) - (3R + 2|g_0|_X +
  6\delta)$.
\end{proof} 

As a consequence of Corollary \ref{cor:boundedbacktrack_II}, whenever
some edge label $\alpha_i$ in a generalized coding $(g_0, \coding{e})$
satisfies $|\alpha_i |_X > 3R + 2|g_0|_X + 6\delta$, 
the associated
quasi-geodesic sequence makes positive progress along the ray it
tracks. This will be important in the following section.

The final lemma of this section shows that conical codings are
``unique up to bounded distance,'' as follows.

\begin{lemma} \label{lem:codings_bounded_dist} For any
  $g_0, h_0 \in \Gamma$, there exists a constant $D_0 > 0$
  satisfying the following. Suppose that $(g_0, \coding{e})$,
  $(h_0, \coding{f})$ are two generalized codings of a common conical
  point $\zeta$.  Then the Hausdorff distance between the sets
  $\{g_k \st k \geq 0\}$ and $\{h_k \st k \geq 0\}$ (with respect to
  the metric $d_X$) is at most $D_0$.
\end{lemma}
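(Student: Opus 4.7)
The plan is to compare both quasi-geodesic sequences to a common geodesic ray in $X$ and to control depth-$0$ elements of $(g_k)$ using the horoball structure when $(h_j)$ makes large jumps at parabolic vertices.

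First I would fix a regular geodesic ray $\sigma$ in $X$ from $\idgamma$ to $\zeta$. By Corollary~\ref{cor:generalized_code_is_qg}, both $(g_k)$ and $(h_j)$ lie in the $C$--neighborhood of $\sigma$, with $C := R + \max(|g_0|_X, |h_0|_X) + 2\delta$. Let $p_k, q_j \in \sigma$ be closest-point projections of $g_k, h_j$. Since $g_k$ and $h_j$ lie in $\Cay(\Gamma) \subset X$ at depth $0$, we have $\depth(p_k), \depth(q_j) \le C$. By Corollary~\ref{cor:boundedbacktrack_II}, both $d_X(\idgamma, p_k)$ and $d_X(\idgamma, q_j)$ tend to infinity with uniformly bounded backtracking.

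The core step is to produce, for each $g_k$, some $h_j$ with $d_X(g_k, h_j)$ bounded by a constant depending only on $g_0, h_0, R, \delta$; a symmetric argument then yields the desired Hausdorff bound. Fix $k$ and choose $j$ minimal with $d_X(\idgamma, q_j) \ge d_X(\idgamma, p_k)$. If $d_X(\idgamma, q_{j-1}) \ge d_X(\idgamma, p_k) - L$ for a uniform threshold $L$, then $q_{j-1}$ and $q_j$ straddle $p_k$ within bounded distance along $\sigma$, giving $d_X(g_k, h_j)$ bounded directly. Otherwise the step $\alpha_j = h_{j-1}^{-1} h_j$ satisfies $|\alpha_j|_X > L - 2C$. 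For every conical $z \in Z$ the label set $L(z)$ is the singleton $\{\alpha_z\}$, and since $Z$ is finite these $\alpha_z$ have uniformly bounded $X$--norm; hence for $L$ sufficiently large, such a large jump must occur at a parabolic vertex $z_{j-1}$. In this case $\alpha_j$ lies in a coset $g\Gamma_p$ with $|g|_\mc{S}$ uniformly bounded over the finitely many parabolic $z \in Z$, and a geodesic in $X$ from $h_{j-1}$ to $h_j$ must enter the horoball $\mc{H}$ of the parabolic point $h_{j-1}z_{j-1}$.

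I would then use convexity of deep horoballs (Lemma~\ref{lem:gmlemma}) together with $\delta$--hyperbolicity of $X$ to show that $\sigma|_{[q_{j-1}, q_j]}$ lies within uniform distance of $\mc{H}$ and, for $L$ large enough, actually enters $\mc{H}$ in a component of length exceeding $4\delta + 3$, so that Lemma~\ref{lem:quickapproach} applies. Combined with the regularity of $\sigma$ inside $\mc{H}$, this forces $\depth \circ \sigma$ to grow at unit rate from both the entry and the exit of $\mc{H}$. The constraint $\depth(p_k) \le C$ then pins $p_k$ within uniform distance of either the entry or the exit of $\mc{H}$, and hence $g_k$ within uniform distance of $h_{j-1}$ or $h_j$. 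The hardest part will be this final geometric step: translating a large combinatorial jump in $(h_j)$ into a genuine horoball excursion of $\sigma$ and using the depth constraint to localize $p_k$ near the entry or exit of the excursion.
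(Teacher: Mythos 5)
Your proposal is correct in outline, and it takes a genuinely different route from the paper. The paper's proof augments the discrete set $\{g_j\}$ with filling paths $\tau_k$ through horoballs to produce a coarsely connected set $Y_{\coding{e}}$, shows this set is Hausdorff-close to $\sigma$ by combining bounded backtracking (to force unboundedness) with coarse connectedness, then uses the depth-zero constraint only at the very end to relocate an arbitrary point of $Y_{\coding{e}}$ near some $g_j$. You instead bypass the construction of $Y_{\coding{e}}$ entirely: you project directly onto $\sigma$ and split into a ``short jump'' case, where $q_{j-1}$ and $q_j$ straddle $p_k$ within bounded distance, and a ``long jump'' case, where the jump must be parabolic and you exploit the depth constraint on $p_k$ together with the unit-speed growth of $\depth\circ\sigma$ across the horoball excursion. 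Both arguments rest on the same toolkit (Lemmas~\ref{lem:gmlemma}, \ref{lem:tauH}, \ref{lem:geodesics_common_horoballs}, and regularity), but your version is more direct and avoids having to argue separately that $Y_{\coding{e}}$ and $Y_{\coding{f}}$ are each Hausdorff-close to $\sigma$; the paper's approach has the advantage that once $Y_{\coding{e}}$ is known to be close to $\sigma$, one reuses it verbatim for the symmetric direction.

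Two small corrections. First, in the ``short jump'' case the point pinned down is $h_{j-1}$, not $h_j$: it is $d_X(\idgamma,q_{j-1}) \geq d_X(\idgamma,p_k)-L$ together with minimality of $j$ that forces $q_{j-1}$ within $L$ of $p_k$ on $\sigma$, hence $d_X(g_k, h_{j-1}) \le 2C + L$. Second, Lemma~\ref{lem:quickapproach} is about the distance to a horoball \emph{from the outside}; what actually controls depth in the long-jump case is the regularity of $\sigma$ inside $\mc{H}$ (the slope of $\depth \circ \sigma$ is $\pm 1$ away from the flat top of length $\le 3$), which is what confines $\{t : \depth\circ\sigma(t)\le C\}$ inside the excursion to two intervals of length $\le C$ at its ends. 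You should also check that the excursion of $\sigma$ into $\mc{H}$ inside $[q_{j-1},q_j]$ occupies all but a uniformly bounded portion of that parameter interval; this follows from $d_X(q_{j-1},q_j) \leq |\lab(f_j)|_X + 2C$ together with the lower bound from Lemmas~\ref{lem:tauH} and~\ref{lem:geodesics_common_horoballs}, and it is what rules out $p_k$ sitting deep in a \emph{different} horoball between $q_{j-1}$ and $q_j$.
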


Before proving the lemma, we fix notation for some more data related
to the automaton $\mc{G}$ which will appear in both the proof below
and in several arguments in the following section. 

\begin{definition}
  \label{defn:coset_representative}
  For each parabolic vertex $q$ of our automaton, we choose an element
  $t_q \in \Gamma$ so that $t_q^{-1}q \in \Pi$; we make this choice so
  that $|t_q|_X$ is minimized.

  Then define the quantity $C$ by
  \[
    C = 2\delta + 6 +  \max\left(\{|t_q|_X \st q\in Z\mbox{
        parabolic}\}\cup\{|\alpha_z|_X \st z\in Z\mbox{
        conical}\}\right).   \]
  Recall that when $z \in Z$ is conical, $\alpha_z$ is the unique
  element in the label set $L(z)$. 
\end{definition}

\begin{proof}[Proof of Lemma~\ref{lem:codings_bounded_dist}]
  Fix a conical point $\zeta \in \bgamp$ and a geodesic ray $\sigma$
  in $X$ from $\idgamma$ to $\zeta$, and let $g_0, h_0$ be arbitrary
  elements of $\Gamma$. Consider a pair of generalized codings
  $(g_0, \coding{e})$ and $(h_0, \coding{f})$ of $\zeta$. Letting $R$
  be the constant from Lemma \ref{lem:code_is_qg}, and defining
  \[
    R_0 = R + \max\{|g_0|_X, |h_0|_X\} + 6\delta,
  \]
  Corollary~\ref{cor:generalized_code_is_qg} implies that the
  associated quasi-geodesic sequences for both $(g_0, \coding{e})$ and
  $(h_0, \coding{f})$ lie in the set
  \[
    N_{R_0}(\sigma) \cap \Cay(\Gamma).
  \]
  Let $(g_k)_{k \in \Izero}$ be the quasi-geodesic sequence associated
  to $(g_0, \coding{e})$, and let $(e_k)_{k \in \bN}$ and
  $(\alpha_k)_{k \in \bN}$ be the sequence of edges and labels for
  $\coding{e}$.

  Suppose that for some particular $k > 0$, we have
  $|\alpha_k|_X \ge C$. Then $\head(e_k)$ is parabolic,
  equal to $t_k p_k$ for some $p_k \in \Pi$ and $t_k \in \Gamma$
  chosen in Definition~\ref{defn:coset_representative}.  We thus have
  $\alpha_k = t_ka_k$ for $a_k \in \Gamma_{p_k}$, and the group
  elements $g_{k-1}t_k$ and $g_k = g_{k-1}t_ka_k$ lie on the boundary
  of a common horoball $\mc{H}$ in $X$.  We let $\tau_k$ be a regular
  $\mc{H}$--geodesic joining $g_{k-1}t_k$ to $g_k$.

  Let $\tau_k'$ be the subsegment of $\tau_k$ contained in the
  $\delta+1$--horoball nested inside $\mc{H}$.  Our assumption on
  $|\alpha_k|_X$ ensures that $\tau_k'$ is non-empty.
  Lemma~\ref{lem:gmlemma} says that $\tau_k'$ is an $X$--geodesic, and
  the remaining subsegments of $\tau_k$ are vertical, so they are also
  $X$--geodesics.  The endpoints $g_{k-1}t_k$ and $g_k$ of $\tau_k$
  are distance at most $R_0 + C$ from points $s_k$ and $s_{k+1}$ on
  $\sigma$.  We thus obtain a geodesic hexagon, one of whose sides is
  a part of $\sigma$, with the opposite side equal to $\tau_k'$.  Any
  point of $\tau_k \minus \tau_k'$ is at most $R_0 + C + \delta +1$
  from either $s_k$ or $s_{k+1}$.  And any point of $\tau_k'$ is
  within $4\delta$ of one of the other five sides of the hexagon,
  hence within $R_0 + C + 5\delta + 1$ of a point of $\sigma$.

  For each index $k$ such that $|\alpha_k|_X \ge C$, we fix a path
  $\tau_k$ as above.  Consider the set
  \begin{equation}\label{eq:Ye}
    Y_{\coding{e}} = \{\idgamma\} \cup \{g_j \st j \ge 0\} \cup \bigcup\{ \tau_k \st |\alpha_k|_X\ge C\}.
  \end{equation}
   
  Since each $g_j$ for $j \ge 0$ is contained in an $R_0$-neighborhood
  of $\sigma$, and each segment $\tau_k$ is contained within an
  $(R_0 + C + 5\delta + 1)$-neighborhood of $\sigma$, the whole set
  $Y_{\coding{e}}$ is also contained within an
  $(R_0 + C + 5\delta + 1)$-neighborhood of $\sigma$.  We also know
  that the set $Y_{\coding{e}}$ is $(C + |g_0|_X)$-coarsely connected,
  since $\dcusp(\idgamma, g_0) = |g_0|_X$ by definition, and for each
  $k > 0$ either $\dcusp(g_{k-1},g_k) = |\alpha_{k}|_X < C$ or there
  is a path $\tau_k$ in $Y_{\coding{e}}$ with one endpoint equal to
  $g_k$ and the other within $C$ of $g_{k-1}$.

  Lemma~\ref{lem:boundedbacktrack_I} implies that the set of points
  $\{g_j \st j \ge 0\}$ has unbounded diameter in $X$, which means
  that there are points of $X$ arbitrarily far along $\sigma$ that lie
  within distance $R_0 + C + 5\delta + 1$ of $Y_{\coding{e}}$.
  This means that $Y_{\coding{e}}$ is actually within Hausdorff
  distance $R'$ of $\sigma$, for a constant $R'$ depending only on
  $R_0, C, \delta$, and $|g_0|_X$.

  The coding $(h_0, \coding{f})$ has an associated set
  $Y_{\coding{f}}$ defined analogously to the way $Y_{\coding{e}}$ was
  defined above in~\eqref{eq:Ye}.  The same argument shows that the
  Hausdorff distance from $\sigma$ to $Y_{\coding{f}}$ is at most
  $R''$, for a constant $R''$ depending only on $R_0, C, \delta$, and
  $|h_0|_X$.  Thus the Hausdorff distance between $Y_{\coding{e}}$ and
  $Y_{\coding{f}}$ is at most $R' + R''$.

  Now consider some $h_k$ in the associated quasi-geodesic sequence
  for $(h_0, \coding{f})$. We wish to show that $h_k$ lies uniformly
  close to some point $g_j$ in the associated sequence for
  $(g_0, \coding{e})$.  We know that there is some point
  $p\in Y_{\coding{e}}$ so that $d_X(p,h_k)\le R' + R''$.  If
  $p = g_j$ for some $j$ there is nothing left to show. If
  $p = \idgamma$, then
  $\dcusp(g_0, h_k) \le \dcusp(g_0, \idgamma) + \dcusp(\idgamma, h_k)
  \le R' + R'' + |g_0|_X.$ Finally, if $p \in \tau_j$ for some
  $j \ge 0$, we note that $p$ has depth at most $R' + R''$, so it is
  distance at most $R' + R'' + 2$ from an endpoint of $\tau_j$, and
  hence at most $R' + R'' + C + 2$ from either $g_{j-1}$ or $g_j$.  In
  each case, we have shown that $h_k$ lies in the
  $(R' + R''+ |g_0|_X + C + 2)$--neighborhood of the associated
  quasi-geodesic sequence for $(g_0, \coding{e})$; we can then argue
  symmetrically to obtain the desired uniform bound on Hausdorff
  distance.
\end{proof}


\section{Uniform nesting}\label{sec:uniform_nest}
In this section, we prove an analog of \cite[Lemma 3.8]{MMW1} (there
called the Uniform Contraction Lemma).  More care is needed here
because of the presence of parabolic elements. 

We have seen that any two generalized codings of the same conical
point have infinitely many nearby pairs of points along their
quasi-geodesic sequences $g_k$ and $h_k$.  The next condition says,
roughly, that there are two possibilities.  One possibility is that
these sequences really look quasi-geodesic: sequential points
$g_j, g_{j+1},...$ are eventually uniformly spaced, and the same holds
for the points in the sequence $h_k$.  The second possibility is that
there is an infinite sequence of large parabolic jumps between
successive $g_j, g_{j+1}$ (and the same holds for nearby points in the
sequence $h_k$).  In either case, one can control the behavior of a
sequence of nested sets determined by one coding in terms of the other
coding, which is what we will need to prove that codings determine a
well-defined semi-conjugacy between the standard action and a small
perturbation.

\begin{definition} \label{def:unif_nest} Let $(g_0, \coding{e})$ and
  $(h_0, \coding{f})$ be generalized codings of the same conical point
  in $\bgamp$. Let $g_k, h_k$ be their associated quasi-geodesic
  sequences, and let $W^\coding{e}(k) = W(\tail(e_k))$ and
  $W^\coding{f}(k) = W(\tail(f_k))$.  This pair of codings has the
  \emph{$c$-uniform nesting property} if for any $\smallepsilon<c$,
  there are constants
  $D_1 > 0$ and $D_2 = D_2(\smallepsilon)> 0$ 
  so that at least one of the following
  conditions holds. 
    \begin{enumerate}
    \item \emph{(Uniform nesting with short
        words)} \label{itm:shortword_nesting}  
       There exist $N, M \in \bN$, and sequences $n_k := k + M$ and $m_k$, such that 
       for all $k$ we have 
       \begin{itemize}
       \item 
      $|\lab(e_{n_k})|_X \le D_2$,  
      \item 
      $d_X(g_{n_k},h_{m_k})\le D_1$, 
      and 
      \item
       $g_{n_k + N} \overline{W^\coding{e}(n_k+N)}\subset h_{m_k}W^\coding{f}(m_k).$
        \end{itemize} 
        
    \item\label{itm:longparabolic} \emph{(Uniform nesting with long
        parabolics)} There are infinite sequences of indices $n_k$, $m_k$ such
      that for every $k \in \bN$, $d_X(g_{n_k},h_{m_k})\le D_1$ and 
      \begin{itemize}
      \item $\tail(e_{n_k})$ is a parabolic point;
      \item
        $g_{n_k} \overline{B}_{3\smallepsilon}(\tail(e_{n_k}))\subset
        h_{m_k} W^\coding{f}(m_k)$;
      \item
        $\lab(e_{n_k + 1})N_{\bigepsilon}(W^\coding{e}(n_k + 1) )\subset
        B_{\smallepsilon}(\tail(e_{n_k}))$, where $\bigepsilon$ is as in
        Definition \ref{def:epsilon}.
      \end{itemize}
    \end{enumerate}
  \end{definition} 
  
  See Remark \ref{rem:finite_nesting_condition} for an important interpretation of these conditions.  
  
Our main goal in this section is to prove:
\begin{proposition} \label{prop:unif_nest}
  Given $g_0$, $h_0 \in G$,
  there exists $c >0$ such that if $(g_0, \coding{e})$ and
  $(h_0, \coding{f})$ are generalized codings of the same conical
  point $\zeta \in \bgamp$, then $(g_0, \coding{e})$ and
  $(h_0, \coding{f})$ have the $c$-uniform nesting
    property.  Furthermore, the constant $D_1$ depends only on
  $g_0, h_0$, and the constants $N$ and $D_2$ depend only on $g_0$,
  $h_0$ and the choice of $\smallepsilon<c$, and not on $\zeta$,
  $\coding{e}$, or $\coding{f}$.
 \end{proposition}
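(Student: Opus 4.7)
The plan is to establish a dichotomy for the coding $\coding{e}$ based on whether its edge labels $\lab(e_n)$ are eventually small in $X$-length. Set $D_1$ equal to the Hausdorff distance bound from Lemma~\ref{lem:codings_bounded_dist}, so that for every $n$ there is some $m$ with $d_X(g_n,h_m)\le D_1$. For each $\smallepsilon<c$ (where $c$ is chosen small in terms of $\bigepsilon$, $D$, and $D_\Pi$), use Remark~\ref{rem:finitely_many} together with finiteness of $Z$ to pick $D_2=D_2(\smallepsilon)$ large enough that any label $\alpha\in L(z)$ with $|\alpha|_X>D_2$ is forced to come from a parabolic vertex $z$ and to satisfy $\alpha N_\bigepsilon(W(y))\subset B_\smallepsilon(z)$ whenever there is an edge $z\to y$. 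The conical labels $\alpha_z$ form a finite set, so if $D_2$ exceeds all $|\alpha_z|_X$, only parabolic vertices can produce long labels.

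The dichotomy is then: either (A) for some index $M$ all labels $\lab(e_n)$ with $n>M$ have $|\lab(e_n)|_X\le D_2$, or (B) infinitely many $n$ have $|\lab(e_n)|_X>D_2$. In Case A, put $n_k=k+M$ and pick $m_k$ with $d_X(g_{n_k},h_{m_k})\le D_1$. The bounded step sizes of $(g_n)_{n\ge M}$, together with Corollary~\ref{cor:boundedbacktrack_II}, guarantee that $d_X(\idgamma,g_{n_k+N})$ grows at least linearly in $N$, uniformly in $k$. The nesting $g_{n_k+N}\overline{W^\coding{e}(n_k+N)}\subset h_{m_k}W^\coding{f}(m_k)$ will then follow from a scale comparison in $\bgamp$: the left-hand side is an element of the descending chain of Lemma~\ref{lem:boundedbacktrack_I}, and its $\dvis$-diameter decays like a shadow at $g_{n_k+N}$ (exponentially in $d_X(\idgamma,g_{n_k+N})$), while the right-hand side contains the open set $h_{m_k}V^\coding{f}(m_k)\ni\zeta$, whose $\dvis$-distance to the complement of $h_{m_k}W^\coding{f}(m_k)$ (by Property~\ref{item:edgedefinition}) scales likewise with $d_X(\idgamma,h_{m_k})$. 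Since these two scales differ by at most $D_1$, a uniform $N$ depending only on $g_0, h_0, \smallepsilon$ pushes the left-hand side below the margin and yields the inclusion.

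In Case B, let $(n_k)$ be the increasing sequence of indices where $|\lab(e_{n_k+1})|_X>D_2$. By the choice of $D_2$, each $\tail(e_{n_k})$ is a parabolic vertex and the third bullet of condition~\ref{itm:longparabolic} is automatic. Because the long label at step $n_k+1$ forces a geodesic ray $\sigma$ from $\idgamma$ to $\zeta$ to enter deep into a horoball $\mc{H}$ with parabolic fixed point $g_{n_k}\tail(e_{n_k})$, and because $(h_k)$ tracks $\sigma$ within $D_1$ in $X$ by Lemma~\ref{lem:codings_bounded_dist}, Lemmas~\ref{lem:gmlemma} and~\ref{lem:quickapproach} allow us to argue that the $\coding{f}$-coding must also make a corresponding long parabolic jump for $\mc{H}$. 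We choose $m_k$ to be the index of that jump, which arranges $h_{m_k}\tail(f_{m_k})=g_{n_k}\tail(e_{n_k})$ and keeps $d_X(g_{n_k},h_{m_k})$ bounded. Under $h_{m_k}^{-1}$ the closed ball $\overline{B}_{3\smallepsilon}(g_{n_k}\tail(e_{n_k}))$ becomes a neighborhood of the parabolic vertex $\tail(f_{m_k})$ of radius comparable to $3\smallepsilon$, which lies inside $W^\coding{f}(m_k)$ once $c$ was chosen small enough, using Properties~\ref{item:edgenest} and~\ref{item:edgedefinition} together with the description of $W(y)$ for parabolic $y$.

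The main obstacle is the horoball analysis in Case B: we must carefully match the parabolic jumps of the two codings via their common horoball in $X$ and verify, using the regular-geodesic structure from Lemma~\ref{lem:gmlemma} and the approach rate from Lemma~\ref{lem:quickapproach}, that the pre-jump group elements $g_{n_k}$ and $h_{m_k}$ sit at the same effective horoball depth to produce the $3\smallepsilon$-ball nesting. Case A, by contrast, is more routine, relying on exponential shrinking of shadows in $\bgamp$ and the linear progress afforded by bounded labels.
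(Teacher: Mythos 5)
Your high-level dichotomy and Case~B strategy are recognizably close in spirit to the paper's, but both halves of your argument have genuine gaps that would block them.

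\textbf{Case~B.} You assert that applying $h_{m_k}^{-1}$ to $\overline{B}_{3\smallepsilon}\big(g_{n_k}\tail(e_{n_k})\big)$ gives a ``neighborhood of $\tail(f_{m_k})$ of radius comparable to $3\smallepsilon$.'' This is false in general: $h_{m_k}$ is typically far from the identity, and $\rho_0$ does not act by quasi-isometries of $(\bgamp,\dvis)$, so there is no a~priori control on radii. The paper's proof instead left-multiplies by $g_{n_k}^{-1}$, reducing the desired inclusion to $\overline{B}_{3\smallepsilon}(z)\subset fW(y)$, where $z=\tail(e_{n_k})$, $y=\tail(f_{m_k})$, and $f=g_{n_k}^{-1}h_{m_k}$ lies in the \emph{finite} set $F_1=\{\gamma:|\gamma|_X<D_1\}$; it then uses the coset identity coming from the third item of Lemma~\ref{lem:para_jump} to see that $z=fy\in fW(y)$, and finally chooses $c$ uniformly over the finitely many realizable triples $(z,y,f)$. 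That last step---the finite enumeration that produces a single uniform $c$---is exactly the step you omit when you say $c$ is ``chosen small enough.'' You also do not invoke Lemma~\ref{lem:para_jump}'s coset statement, which is what guarantees $z\in fW(y)$ in the first place; Lemmas~\ref{lem:gmlemma} and~\ref{lem:quickapproach} alone will not produce the group-theoretic identification of the two jumps without redoing that lemma's work.

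\textbf{Case~A.} Your argument relies on ``exponential shrinking of shadows'' in $\dvis$, i.e., that $\diam\big(g_{n_k+N}\overline{W^\coding{e}(n_k+N)}\big)$ decays exponentially in $d_X(\idgamma,g_{n_k+N})$. The paper only fixes \emph{some} metric $\dvis$ on the boundary, not a visual metric, so this decay is unavailable in the generality used. Even granting a visual metric, the quantitative scale-comparison you sketch (margin of $h_{m_k}W^\coding{f}(m_k)$ ``scales likewise'') needs a precise rate estimate that is nontrivial to produce uniformly in $k$ and in the codings. The paper deliberately avoids any such rate argument: after shifting indices so that all labels have $|\cdot|_X\le D_2$, it restricts to a \emph{finite} sub-automaton and obtains the uniform $N$ by a compactness/contradiction argument (Lemma~\ref{lem:uniformnest} in the Appendix). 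This is not an optional stylistic choice---it is the mechanism that yields a single $N$ that works simultaneously for all $k$, all $\zeta$, and all codings, which is the whole content of the uniformity claim.

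A smaller point: your dichotomy is stated only in terms of the labels of $\coding{e}$; to legitimately enter the short-word case you also need eventual boundedness of the labels of $\coding{f}$, which the paper extracts via Lemma~\ref{lem:para_jump} (introducing the two constants $D_2'$ and $D_2=D_2'+2D_1$ for exactly this reason). You would need to add this reduction explicitly.
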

 
As a first step towards the proof, the following lemma describes the behavior of codings that involve edges labeled by long words in $\Gamma$.  Since codings are close to geodesics, these long words correspond to parabolics -- geometrically, the geodesic has a long segment through the horosphere based at this parabolic point.  The lemma makes precise the notion that two codings of the same point, being close to the same geodesic, have long segments in common horospheres.   

\begin{lemma}[Large jumps come from common
  parabolics] \label{lem:para_jump} Given $g_0, h_0 \in \Gamma$, there
  are constants $D_1, J > 0$ so the following holds.  Let
  $(g_0, \coding{e})$ and $(h_0, \coding{f})$ be two generalized
  codings of the same conical point $\zeta$, let
  $\lab(e_k) = \alpha_k$, $\lab(f_k) = \beta_k$, and let
  $(g_k)_{k \in \bN \cup \{0\}}, (h_k)_{k \in \bN \cup \{0\}}$ be the
  respective associated quasi-geodesic sequences.  Then for each $n$
  with $|\alpha_n|_X > J$
  there exists $m = m(n)$ such that 
\begin{enumerate} 
\item $\dcusp(g_{n-1}, h_{m-1})<D_1$ and $\dcusp(g_n, h_m) < D_1$;
\item There exists $p \in \Pi$ and $g,h\in G$ such that
  $\head(e_n) = gp$ and $\head(f_m) = hp$; 
\item $g_{n-1}g$, $h_{m-1}h$, $g_n$ and $h_m$ are all in the same coset of $\Gamma_p$.  
\end{enumerate}
\end{lemma}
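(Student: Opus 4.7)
The strategy is to show that a large label $|\alpha_n|_X > J$ in the $\coding{e}$-coding corresponds to the geodesic ray $\sigma$ from $\idgamma$ to $\zeta$ passing deeply through a horoball $\mc{H}$ in the cusped space $X$, and then to use the fact that the $h$-sequence also tracks $\sigma$ to extract a matching jump at some index $m$ in the $\coding{f}$-coding. Fix such a $\sigma$ and apply Corollary~\ref{cor:generalized_code_is_qg}, so that both quasi-geodesic sequences $(g_k)$ and $(h_k)$ lie in the $R_0$-neighborhood of $\sigma$ for some $R_0$ depending only on $g_0, h_0$. Choose $J$ much larger than $C$. When $|\alpha_n|_X > J$, the initial vertex $\head(e_n)$ cannot be conical (since conical vertices carry a single outgoing label of $X$-length at most $C$), so $\head(e_n) = t_n p$ with $p \in \Pi$, $|t_n|_X \le C$, and $\alpha_n = t_n a_n$ for some $a_n \in \Gamma_p$. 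Following the proof of Lemma~\ref{lem:codings_bounded_dist}, the elements $g_{n-1} t_n$ and $g_n = g_{n-1} t_n a_n$ lie on the boundary of a common horoball $\mc{H} \subset X$, and a regular $X$-geodesic $\tau_n$ between them reaches depth at least $(|\alpha_n|_X - 3)/2$.

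By the standard thin-bigon argument applied to $\tau_n$ and the corresponding subsegment of $\sigma$ (whose endpoints lie within $R_0 + C$ of those of $\tau_n$), the geodesic $\sigma$ itself enters $\mc{H}$ to depth at least $(J-3)/2 - R_0 - C - 2\delta$, which can be made arbitrarily large by choosing $J$ large. Since each $h_k$ lies at depth $0$ while being within $R_0$ of $\sigma$, no $h_k$ can be close to this deep interior of $\sigma \cap \mc{H}$. Combining Corollary~\ref{cor:boundedbacktrack_II} (which ensures that $(h_k)$ progresses along $\sigma$ up to bounded error) with the thickening set $Y_{\coding{f}}$ from the proof of Lemma~\ref{lem:codings_bounded_dist}, which lies uniformly Hausdorff-close to $\sigma$, identifies a unique crossing index $m$: every sufficiently large forward portion of $Y_{\coding{f}}$ near the entry and exit of $\mc{H}$ must be accounted for by endpoints of a single large jump $h_{m-1} \to h_m$, giving a uniform constant $D_1 = D_1(g_0, h_0)$ with $d_X(h_{m-1}, g_{n-1}) \le D_1$ and $d_X(h_m, g_n) \le D_1$. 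This yields conclusion~(1).

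Since $d_X(h_{m-1}, h_m) = |\beta_m|_X \ge |\alpha_n|_X - 2D_1 > C$ for $J$ large, the same reasoning applied to $\coding{f}$ produces $\head(f_m) = t_m' p'$ with $p' \in \Pi$, $|t_m'|_X \le C$, and a horoball $\mc{H}'$ containing $h_{m-1} t_m'$ and $h_m$ on its boundary, traversed by a regular geodesic $\tau_m'$ whose endpoints lie within $D_1 + C$ of those of $\tau_n$. The crux is to deduce $\mc{H} = \mc{H}'$. If they were distinct, then a point at depth $k$ inside $\mc{H}$ would have $X$-distance at least $k$ from $\mc{H}'$, since any path must exit $\mc{H}$ to reach $\mc{H}'$. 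On the other hand, by $2\delta$-thinness of the geodesic quadrilateral with sides $\tau_n$, $\tau_m'$, and two short horizontal segments of length at most $D_1 + C$, any point of $\tau_n$ at depth exceeding $D_1 + C + 2\delta$ lies within $2\delta$ of $\tau_m' \subset \mc{H}'$; for $J$ sufficiently large, $\tau_n$ reaches such depths, producing a contradiction. Hence $\mc{H} = \mc{H}'$, so the boundary coset $g_{n-1} t_n \Gamma_p$ equals $h_{m-1} t_m' \Gamma_{p'}$. Since distinct elements of $\Pi$ have parabolic fixed points in distinct $\Gamma$-orbits, this forces $p = p'$.

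Setting $g = t_n$ and $h = t_m'$ gives conclusion~(2). For (3), all four elements $g_{n-1} g$, $h_{m-1} h$, $g_n$, and $h_m$ lie on the boundary of the common horoball $\mc{H}$, which is precisely the single coset $g_{n-1} t_n \Gamma_p$. The main technical obstacle throughout is calibrating $J$ so that the penetration depths of $\tau_n$ and $\tau_m'$ into their respective horoballs reliably exceed the bounds on their endpoint separations; this is what allows the thin-quadrilateral argument to force $\mc{H} = \mc{H}'$ rather than merely placing the two horoballs near one another.
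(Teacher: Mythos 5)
Your high-level strategy matches the paper's: show a long label forces a deep horoball penetration by $\sigma$, then use that the $h$-sequence also tracks $\sigma$ to find a matching jump. But there is a genuine gap in your proof of conclusion~(1), which is where the real work lives. You assert that ``combining Corollary~\ref{cor:boundedbacktrack_II}\ldots with the thickening set $Y_{\coding{f}}$\ldots identifies a unique crossing index $m$'' yielding $d_X(g_{n-1},h_{m-1})\le D_1$ and $d_X(g_n,h_m)\le D_1$, but you never actually establish this bound. Having $Y_{\coding{f}}$ Hausdorff-close to $\sigma$ only tells you that $h_{m-1}$ and $h_m$ are near \emph{some} points of $\sigma$; it does not by itself localize them near the specific points where $g_{n-1}$ and $g_n$ attach to $\sigma$. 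Moreover, your argument is circular at the next step: you derive $|\beta_m|_X \ge |\alpha_n|_X - 2D_1 > C$ from~(1), whereas the paper proves $|\beta_m|_X > C$ \emph{independently} (via the nested horoball $\mc{H}'$ and Lemma~\ref{lem:geodesics_common_horoballs}) before establishing the distance bound.

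The paper handles this by (a)~defining $m$ cleanly as the first index with $h_m$ within $R'$ of the outgoing ray $\sigma_2 \subset \sigma\setminus\mc{H}'$, which is well-posed since each $h_k$ has depth $0$; (b)~proving a separate Claim, using bounded backtracking, that $g_{n-1}$ and $g_n$ lie near $\sigma_1$ and $\sigma_2$ respectively rather than in the reverse order, so that the two codings cross $\mc{H}$ in the \emph{same direction}; and (c)~applying Lemma~\ref{lem:quickapproach} to show that the attachment points $x_{n-1}, y_{m-1}\in\sigma_1$ are both within $2R'+C+\delta$ of the single entry point $z_1$ of $\mc{H}'$, which is what finally yields the explicit bound $D_1 = 6R' + 2C + 4\delta$. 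You omit both (b) and (c); in particular, without ruling out that $g_{n-1}$ is near the exit while $h_{m-1}$ is near the entry, the triangle-inequality bound simply does not follow. Your later thin-quadrilateral argument for $\mc{H}=\mc{H}'$ is a reasonable alternative to the paper's use of the uniqueness part of Lemma~\ref{lem:tauH}, but it presupposes the endpoint bound $D_1+C$, which is precisely the unproven step.
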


The rough idea of the proof is as follows. We consider a regular
geodesic ray $\sigma$ in $X$ from $\idgamma$ to $\zeta$, close to the quasi-geodesic sequences associated to both
generalized codings.
We first prove that, if the distance in $X$ between two consecutive
points $g_{n-1}, g_n$ is large, then a geodesic segment between
$g_{n-1}$ and $g_n$ must spend a large amount of its lifetime in some
horoball $\mc{H}$ (this will follow from Lemma~\ref{lem:tauH}
below). Using the fact that the quasi-geodesic sequence tracks $\sigma$, 
we prove that $\sigma$ must also spend some large
amount of time in the same horoball $\mc{H}$. If this time is long
enough, we can even conclude that $\sigma$ spends some large time in a
horoball $\mc{H}'$ nested deeply inside of $\mc{H}$.

Then, using the fact that the quasi-geodesic sequence associated to 
$\coding{f}$ also tracks $\sigma$, we
can show that there are points $h_{m-1}, h_m$ in this sequence 
on ``either side'' of
$\mc{H}'$. Since these points are far apart, a regular geodesic
joining $h_{m-1}$ to $h_m$ must spend a long amount of time in
$\mc{H}$, which essentially proves the lemma. 

To make the above reasoning precise, we first prove two general lemmas
about the geometry of the cusped space $X$. The first lemma gives us a
way to estimate the length of time a geodesic in $X$ spends inside a
horoball $\mc{H}$ when one of the endpoints of the geodesic lies on
the boundary of $\mc{H}$.
\begin{lemma}\label{lem:tauH}
  Suppose that $\alpha \in G$ lies in a coset $g\Gamma_p$ for some
  $g \in \Gamma$ and $p \in \Pi$, and let $\mc{H} \subset X$ be the
  horoball in $X$ based at $g\Gamma_p$. Let $\tau$ be a regular
  geodesic in $X$ from $\idgamma$ to $\alpha$. Then $\tau \cap \mc{H}$
  contains a subsegment with length at least
  $|\alpha|_X - (|g|_X + 12\delta)$.
  
  Moreover, if $\mc{H}' \neq \mc{H}$ is any other horoball in $X$,
  then any component of $\tau \cap \mc{H}'$ has length at most
  $|g|_X + 12\delta$. 
\end{lemma}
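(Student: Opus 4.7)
The plan is to exploit $\delta$-thinness of the geodesic triangle with vertices $\idgamma$, $g$, and $\alpha$. Writing $\alpha = ga$ with $a \in \Gamma_p$, both $g$ and $\alpha$ are vertices at depth $0$ on $\partial\mc{H}$, and the in-horoball structure (together with Lemma~\ref{lem:gmlemma}) provides a regular geodesic $\rho$ from $g$ to $\alpha$ contained entirely in $\mc{H}$, of length $d_X(g,\alpha) \ge |\alpha|_X - |g|_X$. The triangle has sides $[\idgamma,g]$ (of length $|g|_X$), $\rho$, and $\tau$, so every point of $\tau$ lies within $\delta$ of $[\idgamma,g] \cup \rho$. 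The key geometric fact underlying both parts is that distinct horoballs are well-separated: a point at depth $d$ inside any horoball $\mc{H}''$ has distance at least $d$ to every other horoball, because any path exiting $\mc{H}''$ must first descend to depth $0$.

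For Part 1, I would consider the sub-segment $\rho''\subset \rho$ at depth at least $2\delta + 1$. By regularity, $\rho''$ is a single connected sub-segment of length at least $d_X(g,\alpha) - (4\delta + 2) \ge |\alpha|_X - |g|_X - 4\delta - 2$. Any point within distance $\delta$ of $\rho''$ has depth at least $\delta + 1$, and the separation property then forces it to lie in $\mc{H}$ rather than in any other horoball. A standard tripod/Gromov-product estimate converts $\delta$-thinness of the triangle into the statement that $\tau(t)$ is $\delta$-close to $\rho$ on an interval of the form $[a, |\alpha|_X]$ with $a \le |g|_X + O(\delta)$; the same is true with $\rho$ replaced by $\rho''$ after absorbing a few more $\delta$s into $a$. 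This interval lies in a single connected component of $\tau \cap \mc{H}$, and tracking the constants yields the claimed lower bound $|\alpha|_X - |g|_X - 12\delta$.

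For Part 2, let $[a',b']$ be a component of $\tau \cap \mc{H}'$ with $\mc{H}' \neq \mc{H}$; the bound is trivial if $b' - a' \le 4\delta + 5$. Otherwise, regularity of $\tau$ produces a sub-interval of length at least $b' - a' - (4\delta + 2)$ on which $\tau$ has depth at least $2\delta + 1$ in $\mc{H}'$. Separation then gives $d_X(\tau(t),\mc{H}) \ge 2\delta + 1 > \delta$, so $\tau(t)$ is not within $\delta$ of $\rho \subset \mc{H}$, and by $\delta$-thinness of the triangle it must be within $\delta$ of $[\idgamma, g]$. A geodesic sub-segment of length $L$ lying in the $\delta$-neighborhood of a geodesic of length $|g|_X$ satisfies $L \le |g|_X + 2\delta$ (by comparing endpoints), so $b' - a' - (4\delta + 2) \le |g|_X + 2\delta$, whence $b' - a' \le |g|_X + 6\delta + 2 \le |g|_X + 12\delta$.

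The substantive ingredients are the well-separation of horoballs and the tripod approximation, both of which are routine; I expect the main work to be careful bookkeeping of the various $O(\delta)$ terms so they land inside the stated $12\delta$ slack. The one verification worth flagging is that the sub-interval produced in Part~1 really does lie in a single connected component of $\tau \cap \mc{H}$ (rather than being artificially broken by short excursions out of $\mc{H}$), which follows because points close to $\rho''$ are genuinely inside $\mc{H}$, not merely close to it.
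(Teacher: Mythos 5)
Your overall strategy is the same as the paper's: use thinness of a geodesic polygon with one side equal to $\tau$ and the opposite side(s) deep in $\mc{H}$ to conclude that most of $\tau$ lies in $\mc{H}$, then use disjointness of horoballs for the second statement. However, there is a gap in your very first step, where you assert that Lemma~\ref{lem:gmlemma} supplies a regular $X$-geodesic $\rho$ from $g$ to $\alpha$ contained entirely in $\mc{H}$, of length $d_X(g,\alpha)$. This would require the $0$-horoball $\mc{H}$ to be convex in $X$, but Lemma~\ref{lem:gmlemma} only asserts convexity of $k$-horoballs for $k \ge \delta+1$, and its second clause concerns geodesics in the horoball's \emph{intrinsic} metric, which need not be geodesics of $X$. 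Without knowing $\mc{H}$ is convex, an $X$-geodesic from $g$ to $\alpha$ need not stay in $\mc{H}$, so the subsegment $\rho''$ ``at depth $\ge 2\delta+1$ in $\mc{H}$'' is not well defined as you use it, and the $\delta$-thin triangle argument does not get started.

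The paper sidesteps this exactly as follows: instead of a single side $\rho$, it uses the three-segment path $[g,a]\cup[a,b]\cup[b,\alpha]$, where $a$ and $b$ lie on the $3\delta$-horoball $\check{\mc{H}}$ nested inside $\mc{H}$; the verticals $[g,a]$ and $[b,\alpha]$ are automatically $X$-geodesics, and $[a,b]$ can be taken to be a regular $X$-geodesic between points of $\check{\mc{H}}$, which \emph{is} convex by Lemma~\ref{lem:gmlemma}. This produces a geodesic pentagon, to which $3\delta$-thinness (rather than $\delta$-thinness) is applied; the extra $2\delta$ is the price of the decomposition and is precisely what produces the stated $12\delta$ slack. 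After making this substitution your bookkeeping for both parts should go through, with $\delta$ replaced by $3\delta$ in the thinness estimates; in fact your Part~2 argument is spelled out in more detail than the paper's one-line remark that ``horoballs have disjoint interior.''
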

\begin{proof}
  Note that the elements $g$ and $\alpha$ both lie on the boundary of
  the horoball $\mc{H}$. Let $\check{\mc{H}}$ be the
  $3\delta$--horoball nested inside $\mc{H}$. (See Section
  \ref{sec:setup} for definitions.)  Let $a$ be the point on
  $\partial\check{\mc{H}}$ which is connected by a vertical path to
  $g$, and let $b$ be the point on $\partial\check{\mc{H}}$ which is
  connected by a vertical path to $\alpha$.

  Consider a geodesic pentagon whose vertices are (in cyclic order)
  $\idgamma$, $g$, $a$, $b$, and $\alpha$, so that the side connecting
  $\idgamma$ to $\alpha$ is $\tau$. The sides $[g,a]$ and $[\alpha,b]$
  are vertical, and we may suppose that the side $[a,b]$ is regular.
  
  Every point on $\tau$ is within $3\delta$ of some point on one of
  the other sides of this pentagon.  At most the initial subsegment of
  length $|g|_X+6\delta$ and the final subsegment of length $6\delta$
  can be close to some other side than $[a,b]$.

  We deduce that a subsegment $\tau'$ of length at least
  $|\alpha|_X - (|g|_X + 12\delta)$ is within $3\delta$ of $[a,b]$,
  and hence within $3\delta$ of $\check{\mc{H}}$.  In particular
  $\tau'$ is completely contained in $\mc{H}$, establishing the first part of the lemma.

  The last sentence follows since horoballs have disjoint interior.  
\end{proof}

The next lemma is essentially a consequence of the quasi-convexity of
horoballs in $X$.
\begin{lemma}[Nearby geodesics enter common horoballs]
  \label{lem:geodesics_common_horoballs}
  Let $\mc{H} \subset X$ be a $k$-horoball for some $k \ge 0$, and let
  $\tau_1, \tau_2$ be two regular geodesic segments in $X$ such that
  the endpoints of $\tau_1$ lie within distance $L$ of the endpoints
  of $\tau_2$. If $\tau_1 \cap \mc{H}$ contains a segment with length
  at least $T \ge 4L + 8\delta + 3$, then $\tau_2 \cap \mc{H}$
  contains a segment with length at least $T - (4L + 8\delta)$.
\end{lemma}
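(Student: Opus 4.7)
The plan is to combine the $2\delta$-slimness of geodesic quadrilaterals with the convexity of sufficiently deeply nested horoballs provided by Lemma~\ref{lem:gmlemma}. Form the geodesic quadrilateral $Q$ with sides $\tau_1$, $\tau_2$, and two short ``connecting'' geodesic segments $\sigma_a, \sigma_b$ of length at most $L$ joining corresponding endpoints. Splitting $Q$ along a diagonal into two $\delta$-slim triangles shows $Q$ is $2\delta$-slim; consequently any point $\tau_1(t)$ at distance strictly greater than $L + 2\delta$ from both endpoints of $\tau_1$ must lie within $2\delta$ of $\tau_2$, since proximity to $\sigma_a$ or $\sigma_b$ would force $\tau_1(t)$ within $L + 2\delta$ of an endpoint.

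Let $\mc{H}^{\ast}$ be the $(k+\delta+1)$-horoball nested inside $\mc{H}$, which is convex by Lemma~\ref{lem:gmlemma}. Because $\tau_1$ is regular, the long component $\tau_1([a_1', b_1'])$ of $\tau_1^{-1}(\mc{H})$ of length at least $T$ contains a sub-segment inside $\mc{H}^{\ast}$ of length at least $T - 2\delta - 2$, reaching $\partial \mc{H}^{\ast}$ after descending $\delta + 1$ into $\mc{H}$. Choose $M$ with $M > L + 2\delta$ and $M \ge 3\delta + 1$, and set $t_1 = a_1' + M$, $t_2 = b_1' - M$; the hypothesis $T \ge 4L + 8\delta + 3$ ensures $t_1 \le t_2$. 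Then $\tau_1(t_i)$ is at depth at least $k + 3\delta + 1$ in $\mc{H}$ and at distance strictly greater than $L + 2\delta$ from each endpoint of $\tau_1$, so the quadrilateral slimness argument produces points $z_i \in \tau_2$ with $d_X(z_i, \tau_1(t_i)) \le 2\delta$; consequently each $z_i$ has depth at least $k + \delta + 1$ and lies in $\mc{H}^{\ast}$.

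Finally, convexity of $\mc{H}^{\ast}$ implies the subsegment of $\tau_2$ joining $z_1$ to $z_2$ is contained in $\mc{H}^{\ast} \subset \mc{H}$, with length at least
\[
  d_X(z_1, z_2) \;\ge\; d_X(\tau_1(t_1), \tau_1(t_2)) - 4\delta \;=\; T - 2M - 4\delta.
\]
The main obstacle is arranging the constants so that the loss $2M + 4\delta$ is absorbed into the stated budget $4L + 8\delta$: one must handle the two regimes where $L + 2\delta + 1$ or $3\delta + 1$ is the binding constraint on $M$ separately, using the slack in $T \ge 4L + 8\delta + 3$ to cover the additive error. When $L$ is small relative to $\delta$, the depth constraint dominates and the argument may require a supplementary use of the stability of nearby geodesics in a $\delta$-hyperbolic space to avoid losing extra $\delta$'s at the endpoints.
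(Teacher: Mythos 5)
Your overall strategy (quadrilateral slimness plus a nested horoball) is the same family of ideas as the paper's proof, but you've set it up in a way that introduces a quantitative conflict that you acknowledge but do not resolve, so the proof as written has a genuine gap.

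The issue is exactly the one you flag at the end. You take $\mc{H}^{\ast}$ to be the $(k+\delta+1)$-horoball in order to invoke convexity (Lemma~\ref{lem:gmlemma}), and you need $z_i \in \mc{H}^{\ast}$. Since $z_i$ is only guaranteed to be within $2\delta$ of $\tau_1(t_i)$, this forces $\tau_1(t_i)$ to have depth at least $k+3\delta+1$, hence $M \ge 3\delta+1$. But the loss in the final segment is $2M + 4\delta$, and matching the stated budget $4L+8\delta$ requires $M \le 2L+2\delta$. Combining $M \ge 3\delta+1$ with $M \le 2L + 2\delta$ forces $L \ge (\delta+1)/2$. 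For smaller $L$ (in particular $L=0$) the two constraints on $M$ are incompatible: taking the minimal $M = 3\delta+1$ gives a loss of $10\delta+2$, exceeding the budget $8\delta$ by $2\delta+2$. The "$+3$" slack in the hypothesis $T \ge 4L+8\delta+3$ cannot cover a deficit of size $2\delta+2$ when $\delta \ge 1$, so the appeal to that slack does not close the gap, and the suggested "supplementary use of stability of nearby geodesics" is not carried out.

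The paper avoids this conflict by not passing through convexity of a deeply nested horoball at all. Instead, it works with the $(k+2\delta)$-horoball $\check{\mc{H}}$ (no convexity needed), chooses the endpoints $x_2, y_2$ on $\tau_2$ by prescribing their distances from the endpoints of $\tau_2$ (rather than by projecting two interior points of $\tau_1$), and uses $2\delta$-slimness of the quadrilateral to show directly that $[x_2,y_2] \subset N_{2\delta}(\check{\mc{H}}) \subset \mc{H}$. That containment needs only that $\check{\mc{H}}$ be $2\delta$ deep inside $\mc{H}$, not that anything be convex, so it does not trigger a hidden lower bound of the form $M \ge 3\delta+1$ and the loss comes out to exactly $4L+8\delta$. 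If you want to stay with your two-point framing, you should do the same: rather than requiring $z_1,z_2$ to lie in a \emph{convex} $(k+\delta+1)$-horoball, show that the whole subsegment $[z_1,z_2]$ lies in the $2\delta$-neighborhood of $\tau_1 \cap \check{\mc{H}}$, which already lands you inside $\mc{H}$ without the extra $\delta$'s.
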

\begin{proof}
  Let $a_1, b_1$ and $a_2, b_2$ be the endpoints of $\tau_1, \tau_2$
  respectively, so that $d_X(a_1, a_2) \le L$ and
  $d_X(b_1, b_2) \le L$. We let $\check{\mc{H}}$ be the
  $(k + 2\delta)$-horoball nested inside of the $k$-horoball
  $\mc{H}$. Then, since $\tau_1$ is regular, and its intersection with
  $\mc{H}$ has length at least $T > 4\delta + 3$, it also intersects
  $\check{\mc{H}}$, and in fact the intersection contains a segment
  with length at least $T - 4\delta$.
  We let $x_1, y_1$ be
  the endpoints of such a segment, so that $a_1, x_1, y_1, b_1$ are
  arranged on $\tau_1$ in that order.

  The length of $\tau_1$ is
  \begin{align*}
    d_X(a_1,x_1) + d_X(x_1,y_1) + d_X(y_1,b_1) &  = d_X(a_1,x_1) + (T-4\delta) +d_X(y_1,b_1) \\
                                               & \ge d_X(a_1,x_1) + d_X(y_1,b_1) +4L + 4\delta + 3
  \end{align*}
  The geodesic $\tau_2$ has endpoints at most $L$ from those of $\tau_1$, so its length is at least $d_X(a_1,x_1) + d_X(y_1,b_1) +2L + 4\delta + 3$.  In particular, there are points  $x_2$ and $y_2$  of $\tau_2$ satisfying
  \begin{align*}
    d_X(a_2, x_2) = d_X(a_1, x_1) + L + 2\delta,\\
    d_X(y_2, b_2) = d_X(y_1, b_1) + L + 2\delta.
  \end{align*}
  Moreover, the points $a_2, x_2, y_2, b_2$ must lie on the segment $\tau_2$
  in that order, since $2(L + 2\delta) < 2L + 4\delta + 3$.
  
  Consider a geodesic quadrilateral with opposite sides
  $\tau_1, \tau_2$. The subsegment $[x_2, y_2] \subset \tau_2$ is
  contained in a $2\delta$-neighborhood of the other three sides of
  this quadrilateral. In fact, $[x_2, y_2]$ must be contained in the
  $2\delta$-neighborhood of $[x_1, y_1]$, meaning it is contained in
  the $2\delta$-neighborhood of $\check{\mc{H}}$ and therefore in
  $\mc{H}$. Thus, this segment in $\tau_2 \cap \mc{H}$ has length at
  least
  \[
    |\tau_2| - 2L - 4\delta - d_X(a_1, x_1) - d_X(y_1, b_1).
  \]
  Then, since $|\tau_2| \ge |\tau_1| - 2L$, by our choice of
  $x_1, y_1$ we have
  \[
    |\tau_1| \ge d_X(a_1, x_1) + (T - 4\delta) + d_X(y_1, b_1),
  \]
  so we obtain the desired bound.
\end{proof}

\begin{proof}[Proof of Lemma \ref{lem:para_jump}]
  Let $C$ be the constant from
  Definition~\ref{defn:coset_representative}, which we recall is
  (partly) determined by fixing a group element $t_q \in \Gamma$ for
  each parabolic vertex $q \in Z$, satisfying $t_q^{-1}q \in
  \Pi$. Letting $R$ be the constant from Lemma~\ref{lem:code_is_qg},
  we set
  \[
    R' = R + \max\{|g_0|_X, |h_0|_X\} + 2\delta,
  \]
  and then let
  \[
    J > 11 R' + 2 C + 50\delta.
  \]

  Consider two conical codings $(g_0, \coding{e})$ and
  $(h_0, \coding{f})$ of a point $\zeta \in \bgamp$, and let
  $(g_n)_{n \in \bN \cup \{0\}}$ and $(h_m)_{m \in \bN \cup \{0\}}$ be the associated
  quasi-geodesic sequences. 
  Fix some $n \in \bN$ such that $|\alpha_n|_X > J$.  Since $J>C$ the edge
  labeled by $\alpha_n$ has its initial vertex at a parabolic point, meaning $\alpha_n$ is in some coset $t_q\Gamma_p$ with
  $q = \head(e_n) = t_q p$ and $p\in \Pi$.  In particular
  $\alpha_n = t_q a$ where $a\in \Gamma_p$.  Let $\mc{H}$ be the
  horoball associated to the coset
  $g_n \Gamma_p = g_{n-1}t_q\Gamma_p$, and let $\tau$ be a regular
  geodesic from $g_{n-1}$ to $g_n$. 
  Now, $g_{n-1}^{-1}\tau$ is a regular geodesic from $\idgamma$ to $\alpha_n = g_{n-1}^{-1}g_n$, so by 
  Lemma~\ref{lem:tauH}, it contains a segment in $g_{n-1}^{-1}\mc{H}$ of length at least $J - (C+12\delta)$.  Thus, 
  $\tau\cap \mc{H}$ contains a segment of length at least
  $J - (C+12\delta)$.

  Let $\sigma$ be a regular geodesic from $\idgamma$ to $\zeta$. 
  For a pair of points $a, b \in \sigma$, let $[a,b]$ denote
  the subsegment of $\sigma$ with endpoints $a,b$. By
  Corollary~\ref{cor:generalized_code_is_qg}, we can find a sequence
  $(x_i)_{i \in \bN}$ on $\sigma$, so that 
  $d_X(g_i, x_i) \le R'$ for all $i$. Then
  Lemma~\ref{lem:geodesics_common_horoballs} implies that the
  intersection $[x_{n-1}, x_n] \cap \mc{H}$ contains a segment with
  length at least $J - C - 4R' - 20\delta$. 

  We now wish to find consecutive
  $h_{m-1}, h_m$ in the quasi-geodesic sequence associated to $(h_0, \coding{f})$
  so that a regular geodesic between $h_{m-1}$ and $h_m$ spends all
  but a uniformly bounded amount of its length inside the horoball
  $\mc{H}$. For this, we let $\mc{H}'$ be the $R'$-horoball nested
  inside of $\mc{H}$. 
  Since $R' > 2\delta$ by definition,
  Lemma~\ref{lem:gmlemma} implies that $\mc{H}'$ is convex, so the
  intersection $\sigma \cap \mc{H}'$ is a geodesic segment, which must
  have length at least $J - C - 6R' - 20\delta$. 
  In particular, the length of this segment is greater than $4R' + 8\delta + 3$, so we will be able to apply Lemma~\ref{lem:geodesics_common_horoballs} to it.
  Let $\sigma_1$ be the connected component of $\sigma \setminus \mc{H}'$ containing $\idgamma$ and let $\sigma_2$ be the unbounded connected component, which exists because $\zeta$ is conical. 

  Since every element in our sequence $h_m$ lies in $\Gamma$, no $h_m$
  can lie in an $R'$-neighborhood of the $R'$-horoball $\mc{H}'$, and
  thus each $h_m$ lies within distance $R'$ of exactly one of
  $\sigma_1$ or $\sigma_2$. We let $m$ be the first index so that
  $h_m$ is within distance $R'$ of $\sigma_2$, so $h_{m-1}$ lies distance at most $R'$ from $\sigma_1$. 
  Let
  $y_{m-1}, y_m$ be points on $\sigma$ which are within distance $R'$ of
  $h_{m-1}$ and $h_m$ respectively. These points must lie on either side of the
  intersection $\mc{H}' \cap \sigma$, so the segment 
  $[y_{m-1}, y_m] \cap \mc{H}'$ is a geodesic segment with length at
  least $J - C - 6R' - 20\delta$.
  Then, if $\tau'$ is a regular
  geodesic between $h_{m-1}$ and $h_m$,
  Lemma~\ref{lem:geodesics_common_horoballs} tells us that
  $\tau' \cap \mc{H}'$ contains a segment of length at least
  $J - C - 10R' - 28\delta > C$.

  In particular, we have
  $d_X(h_{m-1}, h_m) = |\beta_m|_X = |\tau'| > C$, so the initial
  vertex of the edge labeled by $\beta_m$ is a parabolic point
  $q' = t_{q'}p' \in Z$ for some $p' \in \Pi$. Lemma~\ref{lem:tauH}
  implies that the horoball based on $t_{q'}p'$ is the only horoball
  which $\tau'$ can intersect in a segment of length longer than
  $C + 12\delta$.
  Since $\tau'$ meets $\mc{H}$ in a segment longer than
  this, we must have $q = q'$ and $p' = p$, establishing the last two
  items of the lemma.

  It remains to bound $\dcusp(g_{n-1},h_{m-1})$ and
  $\dcusp(g_n, h_m)$. We first show that $\tau$ and $\tau'$ cross $\mc{H}$ in the same direction. 
   That is,
  we show:
  \begin{claim}
    $g_{n-1}$ is in the $R'$-neighborhood of the initial segment
    $\sigma_1$ of $\sigma$, and $g_n$ is in the $R'$-neighborhood of
    the final ray $\sigma_2$.
  \end{claim}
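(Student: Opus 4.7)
The plan is to combine the tracking estimates with the geometry of the nested horoballs $\mc{H}' \subset \mc{H}$. First I would set up the structure of $\sigma$ inside these horoballs: by Lemma~\ref{lem:gmlemma} both $\mc{H}$ and $\mc{H}'$ are convex, so $\sigma \cap \mc{H}$ and $\sigma \cap \mc{H}'$ are single regular geodesic subsegments, the latter sitting in the interior of the former. Label the four boundary crossings $a \le a' \le b' \le b$ along $\sigma$, so $\sigma_1$ ends at $a'$ and $\sigma_2$ begins at $b'$; regularity of $\sigma$ forces the two arcs $[a,a']$ and $[b',b]$ to be vertical of length exactly $R'$.

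Next I would locate the tracking points $x_{n-1}, x_n$ on $\sigma$ supplied by Corollary~\ref{cor:generalized_code_is_qg}. Since $g_{n-1}, g_n \in \Gamma$ have depth $0$ and each tracking point is within $R'$ of a depth-$0$ vertex, both $x_{n-1}$ and $x_n$ have depth at most $R'$, and hence lie outside the interior of $\sigma \cap \mc{H}'$. So each lies in $\sigma_1$ or in $\sigma_2$.

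The dichotomy comes next: if $x_{n-1}$ and $x_n$ both lay in $\sigma_1$, then the subsegment $[x_{n-1}, x_n]$ of $\sigma$ could meet $\mc{H}$ only inside the descending arc between $a$ and $a'$, giving $|[x_{n-1}, x_n] \cap \mc{H}| \le R'$; symmetrically if both lay in $\sigma_2$. But the estimate already established in the proof gives $|[x_{n-1}, x_n] \cap \mc{H}| \ge J - C - 4R' - 20\delta$, which exceeds $R'$ by our choice of $J$. So $x_{n-1}$ and $x_n$ lie on opposite sides of $\sigma \cap \mc{H}'$.

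The main remaining obstacle is deciding which tracking point lands on which side, and I would resolve it by bounded backtracking. Suppose to the contrary that $x_n \in \sigma_1$ and $x_{n-1} \in \sigma_2$. Then
\[
  d_X(\idgamma, x_{n-1}) - d_X(\idgamma, x_n) \;\ge\; |\sigma \cap \mc{H}'| \;\ge\; J - C - 6R' - 24\delta,
\]
and combining with $|d_X(\idgamma, g_k) - d_X(\idgamma, x_k)| \le R'$ this forces
\[
  d_X(\idgamma, g_n) \;\le\; d_X(\idgamma, g_{n-1}) - \bigl(J - C - 8R' - 24\delta\bigr).
\]
However, Corollary~\ref{cor:boundedbacktrack_II} gives $d_X(\idgamma, g_n) > d_X(\idgamma, g_{n-1}) - (3R + 2|g_0|_X + 6\delta)$, which contradicts the displayed inequality once $J$ is enlarged past the appropriate linear combination of $C$, $R$, $R'$, $\delta$, and $|g_0|_X$ (if necessary). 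Hence $x_{n-1} \in \sigma_1$ and $x_n \in \sigma_2$, and the claim follows since each $g_k$ is within $R'$ of its tracking point.
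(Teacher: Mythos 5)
Your overall strategy is the same as the paper's: fix the tracking points $x_{n-1}, x_n$, establish a dichotomy (they lie on opposite sides of $\sigma \cap \mc{H}'$), and rule out the wrong ordering with bounded backtracking (Corollary~\ref{cor:boundedbacktrack_II}). The bounded-backtracking branch is essentially identical to the paper's, modulo your deferral of the constant check (the paper's choice $J > 11R' + 2C + 50\delta$ already covers it).

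However, your justification of the dichotomy has a genuine gap. You cite Lemma~\ref{lem:gmlemma} to claim that \emph{both} $\mc{H}$ and $\mc{H}'$ are convex and hence that $\sigma \cap \mc{H}$ is a single regular subsegment. That lemma only gives convexity of $k$-horoballs for $k \ge \delta + 1$; it says nothing about the ambient depth-$0$ horoball $\mc{H}$, which need not be convex in $X$. A priori, $\sigma$ can exit and re-enter $\mc{H}$, so the decomposition into exactly four boundary crossings $a \le a' \le b' \le b$ (with $[a,a']$ a single vertical arc of length $R'$) is unjustified, and the bound $|[x_{n-1},x_n]\cap\mc{H}| \le R'$ does not follow as stated. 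The argument is salvageable without convexity of $\mc{H}$: if $[x_{n-1},x_n] \subset \sigma_1$ then it is disjoint from $\mc{H}'$, so any component of $[x_{n-1},x_n]\cap\mc{H}$ is a regular geodesic at depth $< R'$ and hence has length less than $2R'+3$, which is still smaller than $J - C - 4R' - 20\delta$. Alternatively, the paper avoids this entirely by applying Lemma~\ref{lem:geodesics_common_horoballs} to $\mc{H}'$ (which \emph{is} convex, since $R' > \delta+1$) to conclude that $[x_{n-1},x_n]$ actually meets $\mc{H}'$, after which the dichotomy is immediate from convexity of $\mc{H}'$. I would adopt one of these two fixes.
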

  \begin{proof}
    The claim essentially follows from the fact that the the distance $d_X(g_{n-1}, g_n)$ is large
     and from the bounded
    backtracking property of the quasi-geodesic sequence
    $(g_n)_{n \in \bN}$ (Corollary~\ref{cor:boundedbacktrack_II}). To
    be specific, we consider the points $x_{n-1}, x_n$ on $\sigma$
    which are within distance $R'$ of $g_{n-1}, g_n$
    respectively. Recall that the regular geodesic $\tau$ between
    $g_{n-1}$ and $g_n$ intersects $\mc{H}$ in a segment of length at
    least $J - C - 12\delta$, so $\tau \cap \mc{H}'$ contains a
    segment of length at least $J - C - 2R' - 12\delta$. Then
    Lemma~\ref{lem:geodesics_common_horoballs} implies that
    $[x_{n-1}, x_n] \cap \mc{H}'$ contains a segment of length at
    least $J - C - 6R' - 20\delta > 0$. So, either
    $x_{n-1} \in \sigma_1$ and $x_n \in \sigma_2$, or vice versa.

    However, if $x_{n-1} \in \sigma_2$ and $x_n \in \sigma_1$, then
    \[
      d_X(\idgamma, x_{n-1}) = d_X(\idgamma, x_n) + d_X(x_{n-1}, x_n),
    \]
    implying that
    \[
      d_X(\idgamma, x_n) \le d_X(\idgamma, x_{n-1}) - (J - C - 6R' -
      20\delta).
    \]
        Since
    $d_X(x_{n-1}, g_{n-1}) \le R'$ and $d_X(x_n, g_n) \le R'$, this
    would imply
    $|g_n|_X \le |g_{n-1}|_X - (J - C - 8R' - 20\delta) < |g_{n-1}|_X
    - (3R' + 2\delta)$.
    Since $R' \ge R + |g_0|_X + 2\delta$ by definition, this
    contradicts Corollary~\ref{cor:boundedbacktrack_II}. This proves
    the claim.  \end{proof}

  We now consider the points $x_{n-1}, y_{m-1}$ on $\sigma_1$ within
  distance $R'$ of $g_{n-1}, h_{m-1}$. Let $z_1, z_2$ be the
  endpoints of $\sigma_1$ and $\sigma_2$ on $\partial\mc{H}'$,
  respectively.

  Since $d_X(g_{n-1}, \mc{H}') \le C + R'$ and
  $d_X(h_{m-1}, \mc{H}') \le C + R'$, we have
  $d_X(x_{n-1}, \mc{H}') \le 2R' + C$ and
  $d_X(y_{m-1}, \mc{H}') \le 2R' + C$. Then, we apply
  Lemma~\ref{lem:quickapproach} to the subsegment $[x_{n-1}, z_2]$
  and the $R'$-horoball $\mc{H}'$ to deduce that
  $d_X(x_{n-1}, \partial\mc{H}')$ differs by at most $\delta$ from
  $d_X(x_{n-1}, z_1)$, and similarly for $y_{m-1}$. In particular,
  $d_X(x_{n-1}, y_{m-1}) \le 2(2R' + C + \delta)$, hence
  $d_X(g_{n-1},h_{m-1})\le 6R' + 2C + 4\delta$.

  A nearly identical argument applied to the points $x_n, y_m$ (with
  the roles of $\sigma_1, z_1$ interchanged with the roles of
  $\sigma_2, z_2$) shows that $\dcusp(g_n, h_m)$ is also at most
  $6R' + 2C + 4\delta$, meaning we can set $D_1 = 6R' + 2C + 4\delta$.
\end{proof}

\begin{proof}[Proof of Proposition \ref{prop:unif_nest}]
   Let $(g_0, \coding{e})$ and $(h_0, \coding{f})$ be generalized codings 
   of the same conical
  point $\zeta$, with $\lab(e_n) = \alpha_n$, $\lab(f_m) = \beta_m$
  and let $(g_n)_{n \in \bN \cup \{0\}}$ and $(h_m)_{m \in \bN \cup \{0\}}$ be the
  associated quasigeodesic sequences. 
  
  First, we set the constants $c$
  and $D_1$.  (The numbers $D_2,N$ depends on $\smallepsilon$ so they will be set later.)
  Choose
  $D_1$ and $J$ so that they are at least as large as the corresponding
  constants in Lemma \ref{lem:para_jump}, and so that $D_1$ is also at
  least the Hausdorff distance bound in
  Lemma~\ref{lem:codings_bounded_dist}.  Note that these only depend
  on $g_0$ and $h_0$, not $\coding{e}, \coding{f}$ or $\zeta$.
  
  By Lemma~\ref{lem:para_jump}, whenever $|\alpha_n|_X > J$, there
  exists a parabolic point $p \in \Pi$, an index $m$, and elements
  $g, h \in \Gamma$ such that $d(g_{n-1}, h_{m-1})< D_1$,
  $\head(e_n) = gp$ and $\head(f_m) = hp$, and the elements $g_{n-1}g$
  and $h_{m-1}h$ lie in the same coset of $\Gamma_p$.

  Thus, $g\Gamma_p = (g_{n-1}^{-1}h_{m-1}) h \Gamma_p$, and
  $|g_{n-1}^{-1}h_{m-1}|_X < D_1$, so $g_{n-1}^{-1}h_{m-1}$ lies in a
  finite set $F_1 := \{ f \in \Gamma \st |f|_X < D_1 \}$.  Restating the
  above, we have $gp = fhp$, for $f \in F_1$. Since $hp \in W(hp)$, we
  have $fhp = gp \in fW(hp)$.  Consider all possible triples $(z,y,f)$
  such that $z,y\in Z$, $f\in F_1$, and $z\in f W(y)$.
  There are finitely many such, so we may choose some $c>0$
  such that 
  $\overline{B}_{3c}(z) \subset f W(y)$ holds for every such triple.

  Now fix any $\smallepsilon< c$, and 
  choose $D_2' = D_2'(\smallepsilon) > J$ large enough so that, for each edge
  from $z$ to a vertex $x$ where $z = gp$ is parabolic, and each
  $\alpha \in L(z)$, if $|\alpha|_X > D_2'$ then
  $\alpha \overline{N}_{\bigepsilon}(W(x)) \subset B_{\smallepsilon}(z)$. We
  know such a $D_2'$ exists because only finitely many elements of the
  coset $g\Gamma_p$ fail this contraction condition: as we observed at
  the beginning of Remark~\ref{rem:finitely_many}, since the edge
  inclusion condition
  \[
    \alpha\overline{N}_\bigepsilon(W(x)) \subset \alpha\hat{W}(z) \subset
    W(z)
  \]
  holds for all but finitely many $\alpha \in g\Gamma_p$, the closed
  neighborhood $\overline{N}_\bigepsilon(W(x))$ cannot contain $p$ and
  therefore all but finitely many $\alpha \in g\Gamma_p$ take
  $\overline{N}_\bigepsilon(W(x))$ into an arbitrarily small neighborhood
  of $z$. We define
  $D_2 = D_2(\smallepsilon) := D_2'(\smallepsilon) + 2D_1$.  Again, this
  depends on $h_0$, $g_0$ and $\smallepsilon$, but not the strict
  codings $\coding{e}$, $\coding{f}$ or the point $\zeta$.
   

  We now can prove that the generalized codings satisfy uniform nesting.  
  As a first case, suppose
  $|\alpha_n|_X > D_2'$ for infinitely many $n$. 
    We will show that,
  in this case, we have {\em uniform nesting with long parabolics}.
  Choose an infinite sequence of indices $n_k$ so that
  $|\alpha_{n_k + 1}|_X > D_2'$ is always satisfied. Lemma
  \ref{lem:para_jump} then provides a sequence of indices $m_k$ with
  $\dcusp(g_{n_k}, h_{m_k}) < D_1$ and
  $\dcusp(g_{n_k+1}, h_{m_k + 1}) < D_1$, which
  satisfies all of the requirements of ``uniform nesting with long
  parabolics'' by our choice of constants $D_2', \smallepsilon$. This
  finishes the proof in this case.
  
  On the other hand, if $|\beta_m|_X > D_2 = D_2' + 2D_1$ is satisfied
  for infinitely many $m$, then we can use Lemma~\ref{lem:para_jump}
  again to find an an infinite subsequence
  $(\alpha_{n_k})_{k \in \bN}$ so that $|\alpha_{n_k}|_X > D_2'$ for
  every $k$, and we are in the previous case.
  
  If neither of the first two cases hold, then we know that both
  $|\alpha_n|_X$ and $|\beta_m|_X$ are bounded by $D_2$ for all but
  finitely many $n, m$, respectively.  For this case, we will create
  new generalized codings of $\zeta$ by ``shifting the indices:'' we
  replace $g_0$ and $h_0$ with terms further along in the associated
  quasi-geodesic sequences, and truncating the first terms of the
  sequence so that all labels are bounded by $D_2$. This will put us
  in a position to consider a sub-automaton only including edges with
  short labels, and then apply Lemma \ref{lem:uniformnest} from the
  Appendix to conclude the proof.
  
  In more detail: first, choose $M$ large enough so that
  $|\alpha_n|_X \le D_2$ for all $n \geq M$.
  Lemma~\ref{lem:codings_bounded_dist} tells us that for each
  $n \in \bN$, there is some index $m(n)$ so that
  $d_X(g_n, h_{m(n)}) < D_1$.  Lemma~\ref{lem:boundedbacktrack_I}
  implies that $m(n)$ tends to infinity as $n$ tends to infinity, so
  increasing $M$ if necessary, we can ensure that
  $|\beta_{k}|_X \le D_2$ for every $k \geq m(M)$.

  Let $\coding{e}'$ be the sub-path of $\coding{e}$ starting with the
  edge $e_{M}$ and let $\coding{f}'$ be sub-path of $\coding{f}$
  starting with the edge $f_{m(M)}$. 
  Consider the generalized codings $(g_{M-1}, \coding{e}')$ and
  $(h_{m(M)-1}, \coding{f}')$, and let $g_n' = g_{n + M -1}$ and
  $h_m' = h_{m + m(M) -1}$ be their associated quasi-geodesic
  sequences. By construction, both of
  these codings are generalized codings of $\zeta$, and their
  associated quasi-geodesic sequences are tails of the associated
  quasi-geodesic sequences for $(g_0, \coding{e})$ and
  $(h_0, \coding{f})$, respectively. Further, the label sequences of
  $\coding{e}'$ and $\coding{f}'$ consist of elements whose length in
  $X$ is bounded by $D_2$.

  Let $\mc{F}$ be the subgraph of
  $\mc{G}$ obtained by deleting all labels of length more than $D_2$,
  and then deleting all edges with empty label set.  Then $\mc{F}$ is
  a \emph{finitary point coder} in the sense defined in
  Appendix~\ref{sec:appendix}, and $(g_{M-1}, \coding{e}')$ and
  $(h_{m(M)-1}, \coding{f}')$ are generalized $\mc{F}$-codings.

  Let $F$ be the finite subset of $\Gamma$ consisting of elements of
  length at most $D_1$, and apply this to Lemma~\ref{lem:uniformnest}.
  We conclude there is a constant $N > 0$, so that whenever
  $d_X(g_n', h_m') < D_1$, we have
  $g_{n + N}'\overline{W(z_{n+N})} \subset h_m'W(y_m)$, where
  $z_k = \tail(e'_k)$ and $y_k = \tail(f'_k)$.  Translating this
  statement, for each $m_k$ such that $d_X(g_{n_k}, h_{m_k}) \leq D_1$
  and $n_k > M$, we have
  \[g_{{n_k} + N}'\overline{W^{\coding{e}}(n_k +N)} \subset h_{m_k}
    W^{\coding{f}}(m_k). \]

  The constant $N$ in the above depends only
  on the point coder $\mc{F}$, which in turn depends only on $D_2$ and
  the original automaton $\mc{G}$, so this completes the proof.
\end{proof}

\begin{remark} \label{rem:finite_nesting_condition}  
  The containment 
  \begin{equation*}
    g_{n_k + N} \overline{W^\coding{e}(n_k+N)}\subset h_{m_k}W^\coding{f}(m_k)
  \end{equation*}
  from {\em uniform nesting with short words} of Definition \ref{def:unif_nest}
  is equivalent to 
  \begin{equation*}
    g_{n_k} \alpha_{n_k+1} \ldots \alpha_{n_k+ N} \overline{W^\coding{e}(n_k+N)}\subset h_{m_k}W^\coding{f}(m_k)
  \end{equation*}
  or, multiplying on the left by $g_{n_k}^{-1}$,
  \begin{equation}
    \label{eq:nest1}
    \alpha_{n_k+1} \ldots \alpha_{n_k+ N} \overline{W^\coding{e}(n_k+N)}\subset g_{n_k}^{-1}h_{m_k}W^\coding{f}(m_k) \tag{$\dagger$}
  \end{equation}
  Since the assumptions of uniform nesting with short words stipulate
  that $|\alpha_{n_k}| \le D_2$, and $|g_{n_k}^{-1}h_{m_k}|_X < R$,
  and since there are only finitely many sets of the form $W(z)$, the
  inclusions given by \eqref{eq:nest1} are only finite in number. So
  these inclusions correspond to finitely many open conditions,
  meaning they are stable under small perturbation. This is essential
  to our argument in Lemma \ref{lem:well-defined}.

  Similarly, the containments
  $g_{n_k} \overline{B}_{3\smallepsilon}(\head(e_{n_k+1}))\subset h_{m_k}
  W^\coding{e}(m_k+1)$ from the {\em uniform nesting with long
        parabolics} condition of Definition \ref{def:unif_nest} 
  are only
  finite in number. 
\end{remark} 

\section{Proof of main theorem} \label{sec:proof}
Thus far we have suppressed notation for the action of $\Gamma$ on its
boundary, simply writing $g(\zeta)$ or $g\zeta$ for the image of
$\zeta$ under $g$.  We will now need to consider other actions of
$\Gamma$ on this space, so we reintroduce the following notation.  

\begin{notation} 
As in the introduction, let
$\rho_0: \Gamma \to \Homeo(\bgamp)$ denote the standard action of $\Gamma$ on its Bowditch boundary.   Thus, what was previously written $g W(z_k)$ now becomes $\rho_0(g) W(z_k)$, for example.  
\end{notation}

Throughout this section we will work with a fixed automaton $\mc{G}$
for the $\rho_0$-action of $\Gamma$ on $\bgamp$, as given in
Definition \ref{def:automaton}. We also work with a fixed finite
generating set $\mc{S}$ for $\Gamma$.

\begin{definition}  \label{def:RV}
  Given a neighborhood of the identity $\mc{V} \subset C(\bgamp)$, we define
  $\semiconj_{\mc{V}} \subset \Hom(\Gamma, \Homeo(\bgamp))$ to be the
  set of representations $\rho$ such that, for each parabolic point
  $p \in \mathcal{P}$ with stabilizer $P = \Gamma_p$, there exists
  $\phi_{p} \in \mc{V}$ such that $\rho|_P$ has
  $\rho_0|_P$ as a topological factor, via $\phi_p$.
    \end{definition}

\begin{remark}
  \label{rem:parabolic_semiconj}
  The map $\phi_{p}$ defining a semi-conjugacy for the action of $P$ as in Definition \ref{def:RV} is a priori not determined by the
  representation $\rho \in \semiconj_{\mc{V}}$.  To get around this,
  for the rest of this section, whenever we fix
  $\rho \in \semiconj_{\mc{V}}$ for some $\mc{V} \subset C(\bgamp)$,
  then for each $p \in \Pi$ we implicitly choose a semi-conjugacy
  $\phi_{p} \in \mc{V}$ which extends the restriction of $\rho_0$ to
  $P = \Gamma_p$.
\end{remark}

\begin{notation} 
  Assuming that some action $\rho \in \semiconj_{\mc{V}}$ has been
  fixed, then for each parabolic point $z = \rho_0(g)p$, where
  $p \in \Pi$, we let $\phi_z := \rho_0(g) \phi_p \rho(g)^{-1}$.
\end{notation}

Observe the definition of $\phi_z$ depends only on $z$ and $\phi_p$,
and not on the choice of $g \in \Gamma$ such that $z = \rho_0(g)p$. 
To see this, suppose $\rho_0(g)p = \rho_0(h)p$.  Then 
\[\rho_0(g) \phi_p \rho(g)^{-1} = \rho_0(h)\rho_0(h^{-1}g) \phi_p \rho(h^{-1}g)^{-1} \rho(h^{-1}). \]
Since $h^{-1}g \in \Gamma_p$, 
$\rho_0(h^{-1}g) \phi_p = \phi_p \rho(h^{-1}g)$, which shows 
\[\rho_0(g) \phi_p \rho(g)^{-1} = \rho_0(h) \phi_p \rho(h)^{-1}\]
as desired.  

In addition, note that for fixed $z$, if $\rho$ approaches $\rho_0$,
and $\phi_p$ tends to the identity, then $\phi_z$ tends to the
identity as well. 
We also record the following.
\begin{lemma} \label{lem:coset_semiconjugacy} If $z = \rho_0(g)p$,
  then for any $x \in \bgamp$ and any $\alpha \in g\Gamma_p$, we have
\begin{equation}
  \label{eq:coset_semiconjugacy}
  \phi_z(\rho(\alpha)x) = \rho_0(\alpha)\phi_p(x).
\end{equation}
\end{lemma}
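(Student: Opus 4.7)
The plan is to unwind the definition of $\phi_z$ and reduce the statement to the semi-conjugacy relation between $\phi_p$ and $\rho|_{\Gamma_p}$. Since $\alpha \in g\Gamma_p$, I would first write $\alpha = g\gamma$ for some $\gamma \in \Gamma_p$, so that the identity to be proved becomes
\[
  \rho_0(g)\,\phi_p\,\rho(g)^{-1}\,\rho(g\gamma)(x) \;=\; \rho_0(g\gamma)\,\phi_p(x).
\]

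Next I would use that $\rho$ is a homomorphism to cancel $\rho(g)^{-1}\rho(g\gamma) = \rho(\gamma)$ on the left-hand side, reducing the claim to
\[
  \rho_0(g)\,\phi_p\,\rho(\gamma)(x) \;=\; \rho_0(g)\,\rho_0(\gamma)\,\phi_p(x).
\]
Cancelling $\rho_0(g)$, this is exactly the statement that $\phi_p$ semi-conjugates $\rho|_{\Gamma_p}$ to $\rho_0|_{\Gamma_p}$, which holds by the defining property of $\phi_p$ (see Definition~\ref{def:RV} and Remark~\ref{rem:parabolic_semiconj}).

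There is no real obstacle here; the only point requiring a small comment is that the computation uses $\gamma \in \Gamma_p$ rather than $\gamma \in P$ in general, but these coincide by definition. The proof is essentially a one-line algebraic manipulation once $\alpha$ is written in the form $g\gamma$, so I expect it to occupy just a few lines.
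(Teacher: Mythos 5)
Your proposal is correct and is essentially identical to the paper's proof: both write $\alpha = g\gamma$ with $\gamma\in\Gamma_p$, unwind $\phi_z = \rho_0(g)\phi_p\rho(g)^{-1}$, cancel to reduce to the semi-conjugacy identity $\phi_p\rho(\gamma) = \rho_0(\gamma)\phi_p$, and conclude.
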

\begin{proof}
  Let $x$ be given, and suppose that $z = \rho_0(g)p$ and
  $\alpha = gh$ for $h \in \Gamma_p$. Then
\begin{align*}
  \phi_z(\rho(\alpha)x) &= \phi_z(\rho(gh)x)\\
                        &= \rho_0(g)\phi_p(\rho(g)^{-1}\rho(gh)x)\\
                        &= \rho_0(g)\phi_p(\rho(h)x)\\
                        &= \rho_0(gh)\phi_p(x) = \rho_0(\alpha)\phi_p(x).
\end{align*}
\end{proof} 

\subsection{Specifying the neighborhoods $\mc{U}$ and $\mc{V}'$}

\begin{definition}
  Given a subset $\mc{V} \subset C(\bgamp)$ and a representation
  $\rho \in \semiconj_{\mc{V}}$, we define families of sets
  $\{V_\rho(z)\}_{z \in Z}$ and $\{\hat{V}_\rho(z)\}_{z \in Z}$ as
  follows.
  \begin{itemize}
  \item If $z$ is a conical limit point, then we define
    \[
      V_\rho(z) = V(z), \qquad \hat{V}_\rho(z) =
      \rho(\alpha_z^{-1})V_\rho(z),
    \]
    where $\alpha_z$ is the unique element in the label set $L(z)$.
  \item If $z = \rho_0(g)p$ for some $g \in \Gamma$, $p \in \Pi$, we
    define
    \[
      V_\rho(z) = \phi_z^{-1}(V(z)),
    \]
    and define
    \[
      \hat{V}_\rho(z) = \phi_z^{-1}(\hat{V}(z)).
    \]
  \end{itemize}
\end{definition}

\begin{definition}
\label{defn:same_combinatorics}
  Suppose that $\mc{V} \subset C(\bgamp)$ and
  $\rho \in \semiconj_{\mc{V}}$. We say that $\rho$ \emph{has the same
    combinatorics} as the standard action $\rho_0$ if the following
  conditions hold:
  \begin{enumerate}[label=(\roman*)]
  \item\label{item:perturbed_covering} The collection $\{V_\rho(z)\}_{z \in Z}$ is an open
    covering of $\bgamp$.
  \item\label{item:perturbed_v_in_w} For every $z \in Z$, we have $V_\rho(z) \subset W(z)$.
  \item\label{item:perturbed_intersections} For any $y, z \in Z$, we have $\hat{V}_\rho(y)
    \cap V_\rho(z) = \emptyset$ if and only if $\hat{V}(y) \cap V(z) =
    \emptyset$.
  \item\label{item:perturbed_edge_nesting} If there is an edge from
    $z$ to $y$ in $\mc{G}$ labeled by $\alpha$, then
    $ \rho(\alpha)(W(y)) \subset W(z).$
  \end{enumerate}
\end{definition}

\begin{lemma}[Same combinatorics is relatively open]
  \label{lem:same_comb_open}
  For any sufficiently small neighborhood $\mc{V'}$ of the identity in
  $C(\bgamp)$ and any sufficiently small neighborhood $\mc{U}$ of $\rho_0$ in
  $\Homeo(\Gamma, \bgamp)$, each  $\rho \in \mc{U} \cap \semiconj_{\mc{V}'}$ has the same
  combinatorics as $\rho_0$.
\end{lemma}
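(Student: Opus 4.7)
The plan is to verify each of the four conditions \ref{item:perturbed_covering}--\ref{item:perturbed_edge_nesting} separately, producing a neighborhood of $(\rho_0,\mathrm{id})$ on which that condition holds, then intersect. Since $Z$ is finite (and in particular contains only finitely many parabolic vertices), these neighborhoods can be chosen uniformly. A key preliminary observation is that for each parabolic $z=\rho_0(g)p\in Z$, the map $\phi_z=\rho_0(g)\phi_p\rho(g)^{-1}$ can be made arbitrarily close to the identity in $C(\bgamp)$ by shrinking $\mc{V}'$ and $\mc{U}$: as $\phi_p\to\mathrm{id}$ and $\rho(g)\to\rho_0(g)$, we have $\phi_z\to\rho_0(g)\rho_0(g)^{-1}=\mathrm{id}$.

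Conditions \ref{item:perturbed_covering}--\ref{item:perturbed_intersections} are relatively routine stability statements. For \ref{item:perturbed_covering}, the finite cover $\{V(z)\}_{z\in Z}$ has a Lebesgue number $\eta>0$, and the covering property is preserved once each $\phi_z$ is $\eta$-close to the identity, since $V_\rho(z)=\phi_z^{-1}(V(z))$ then contains $x$ whenever $B_\eta(x)\subset V(z)$. For \ref{item:perturbed_v_in_w}, property \ref{item:edgedefinition} gives $\overline{V(z)}\subset W(z)$, a compact-in-open inclusion with positive gap, which is preserved by small perturbations of $\phi_z$. For \ref{item:perturbed_intersections}, property \ref{item:stable_intersections} implies that both disjointness and intersection of $\hat{V}(z)$ and $V(y)$ are robust: disjointness extends to compact closures and so persists under perturbation, while any nonempty open intersection contains a small closed ball that survives in both perturbed sets once the relevant homeomorphisms ($\rho(\alpha_z)$ approaching $\rho_0(\alpha_z)$ for conical vertices, or $\phi_p$ approaching the identity for parabolic ones) are close enough to their unperturbed counterparts.

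The main obstacle is condition \ref{item:perturbed_edge_nesting} when $z=\rho_0(g)p$ is parabolic, because the label set $L(z)=g\Gamma_p\minus F_q$ is infinite and we cannot hope to control $\rho(\alpha)$ uniformly in $\alpha$ by directly demanding pointwise closeness of $\rho$ to $\rho_0$. Two results earlier in the paper combine to bypass this: Remark~\ref{rem:finitely_many} supplies a margin $\epsilon_z>0$ with $N_{\epsilon_z}(\rho_0(\alpha)N_{\bigepsilon}(W(y)))\subset W(z)$ for \emph{every} $\alpha\in L(z)$, and Lemma~\ref{lem:coset_semiconjugacy} furnishes the identity $\phi_z\circ\rho(\alpha)=\rho_0(\alpha)\circ\phi_p$ on $\bgamp$ for all $\alpha\in g\Gamma_p$. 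Shrinking $\mc{V}'$ so that every $\phi_p$ is $\bigepsilon$-close to the identity guarantees $\phi_p(W(y))\subset N_{\bigepsilon}(W(y))$, so for every $x\in W(y)$ and every $\alpha\in L(z)$ the point $\phi_z(\rho(\alpha)x)=\rho_0(\alpha)\phi_p(x)$ lies in $W(z)$ with margin at least $\epsilon_z$. Further shrinking $\mc{V}'$ and $\mc{U}$ so that each $\phi_z$ is $\epsilon_z$-close to the identity then forces $\rho(\alpha)x$ itself into $W(z)$, uniformly over the infinite family $L(z)$. The conical case of \ref{item:perturbed_edge_nesting} is much easier: there is a single label $\alpha_z$ and finitely many edges out of $z$, so Proposition~\ref{prop:proper_nesting} yields the required inclusion for small perturbations of $\rho(\alpha_z)$.
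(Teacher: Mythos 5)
Your proof is correct and follows essentially the same route as the paper's: a Lebesgue number argument for the covering, positive-gap compactness for the nesting of $V$ in $W$ and the intersection pattern, and—crucially for the parabolic case of condition \ref{item:perturbed_edge_nesting}—the uniform margin $\epsilon_z$ from Remark~\ref{rem:finitely_many} combined with the intertwining identity of Lemma~\ref{lem:coset_semiconjugacy} to control $\rho(\alpha)$ simultaneously over the infinite label set $L(z)$. If anything, your write-up is slightly more explicit than the paper's that controlling $\phi_z$ requires shrinking both $\mc{V}'$ and $\mc{U}$ (since $\phi_z$ depends on $\rho$ through $\rho(g)$); the paper's wording mentions only shrinking $\mc{V}'$ at that step, but the substance is identical.
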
 
  
\begin{proof}
  First, we ensure that
  Item~\ref{item:perturbed_covering} in the definition holds. Let $r$
  be a Lebesgue number for the original open covering
  $\{V(z)\}_{z \in Z}$, so that every set of diameter at most $r$ is
  contained in some $V(z)$. We may choose $\mc{V}'$ small enough so
  that $\dvis(\phi_z(x), x) < r/2$ for every parabolic point $z \in Z$
  and $x \in \bgamp$.

  Then, for any $x \in \bgamp$, the $(r/2)$-ball about $x$ is
  contained in some $V(z)$, where $z \in \bgamp$ is either conical or
  parabolic. If $z$ is conical then $V_\rho(z) = V(z)$ and thus
  $x \in V_\rho(z)$. Otherwise, if $z$ is parabolic, then $\phi_z(x)$
  lies in $B_{r/2}(x) \subset V(z)$, hence
  $x \in \phi_z^{-1}(V(z)) = V_\rho(z)$.

  Item~\ref{item:perturbed_v_in_w} can be arranged because
  $\overline{V(z)}\subset W(z)$ for each $z$ (see
  Condition~\ref{item:edgedefinition}).  In particular, there is some
  minimum distance from any $V(z)$ to $\bgamp\minus W(z)$, and we can
  choose $\mc{V'}$ small enough so that no $\phi_z$ moves any point
  more than that distance.

  For Item~\ref{item:perturbed_intersections}, we argue
  similarly. Property~\ref{item:stable_intersections} from
  Proposition~\ref{prop:C_properties} implies there is a minimum
  distance from any $V(z)$ to any disjoint $\hat{V}(y)$.  Shrinking
  $\mc{V'}$ if necessary, we may assume that no $\phi_z$ moves any
  point more than half that distance.  By choosing sufficiently small
  $\mc{U}$, we can ensure that the distance between $\rho(\alpha_z)$
  and $\rho_0(\alpha_z)$ is also at most half that distance.  This
  will ensure that $V(z)\cap \hat{V}(y) = \emptyset$ implies
  $V_\rho(z)\cap\hat{V}_\rho(y) = \emptyset$. For the converse,
  observe that there is some positive radius $r$ so that each nonempty
  intersection $V(z) \cap \hat{V}(y)$ contains a ball of radius
  $r$. The sets $V_\rho(z)$ and $\hat{V}_\rho(y)$ each contain the
  preimage of this ball by a continuous map arbitrarily close to the
  identity, so if $\mc{V}'$ and $\mc{U}$ are small enough then these
  preimages have nonempty intersection.
  
  We now turn to condition \ref{item:perturbed_edge_nesting}. By
  condition \eqref{eq:edge_inclusions} in
  Proposition~\ref{prop:proper_nesting} we have
  $\rho_0(\alpha) \overline{N}_\bigepsilon(W(y)) \subsetneq W(z)$ for
  every edge from $z$ to $y$ labeled by $\alpha$. For conical vertices
  $z$, provided $\rho$ is a small enough perturbation of $\rho_0$,
  each of the containments $\rho(\alpha)(W(y)) \subset W(z)$ will
  hold, since there are only finitely many such edges and labels. For
  parabolic vertices, we argue as follows.

  Consider the constants $\epsilon_z$ from Remark
  \ref{rem:finitely_many}, and fix
  $\epsilon_{\min} \leq \min\{\epsilon_z \st z \in Z \text{
    parabolic}\}$.  The inclusion \eqref{eq:ez} from Remark
  \ref{rem:finitely_many}
    says if $z$ is connected to $y$ by an edge
  labeled $\alpha$, we have
   \begin{equation} \label{bigger_nbhd}
     N_{\epsilon_{\min}}(\rho_0(\alpha)(N_{\bigepsilon} W(y))) \subset
     W(z).
   \end{equation}
   We choose our neighborhood $\mathcal{V'}$ small enough so that, for
   each of the finitely many parabolic vertices $z \in Z$, the
   semi-conjugacy $\phi_z$ satisfies
   $\dvis(x, \phi_z(x)) < \min\{\epsilon_{\min}, \bigepsilon\}$. This
   ensures that for any subset $A \subset \bgamp$ we have
   $\phi_z(A) \subset N_{\bigepsilon}(A)$ and
   $\phi^{-1}_z(A) \subset N_{\epsilon_{\min}}(A)$.  In particular, we
   have $\phi_z W(y) \subset N_{\bigepsilon}( W(y))$, so our choice of
   $\epsilon_{\min}$ and the containment from \eqref{bigger_nbhd}
   above ensures that
   \[ 
     \rho(\alpha) W(y) \subset \phi_z^{-1} \phi_z \rho(\alpha) W(y)  \subset N_{\epsilon_{\min}}(\rho_0(\alpha \phi_z W(y))
     \subset W(z) 
   \] 
   as desired.
\end{proof}

Our next step is to define \emph{$(\mc{G}, \rho)$-codings}, which
provide a modified notion of a $\mc{G}$-coding which is compatible
with the perturbed action $\rho$ instead of the standard action
$\rho_0$. A $(\mc{G}, \rho_0)$-coding is the same thing as a
$\mc{G}$-coding, but since our convention in this section is to make
the standard action explicit, we will only refer to these as
$(\mc{G}, \rho_0)$-codings from this point forward. 

\begin{definition}
  Let $\mc{V} \subset C(\bgamp)$, and suppose that
  $\rho \in \semiconj_{\mc{V}}$ has the same combinatorics as
  $\rho_0$. If $\coding{e} = (e_k)_{k \in \bN}$ is an infinite
  edge path in $\mc{G}$ with $\lab(e_k) = \alpha_k$, we say
  $\coding{e}$ is a $(\mc{G}, \rho)$-coding for $\zeta$ if
  \[
    \zeta \in \bigcap_{k=1}^\infty \rho(\alpha_1) \cdots
    \rho(\alpha_k) W(z_k).
  \]

  If $e_1 \ldots e_n$ is a finite edge path giving a $(\mc{G}, \rho_0)$-coding
  for a parabolic point $z \in \bgamp$, we say that it is a
  $(\mc{G}, \rho)$-coding for $\zeta$ if $\zeta \in \phi_z^{-1}(z)$,
  or equivalently if
  $\zeta \in \rho(\alpha_1 \cdots \alpha_n)\phi_{z_n}^{-1}(z_n)$,
  where $z_n = \tail(e_n)$.
\end{definition}

\begin{lemma}\label{lem:perturbedcodingsexist}
  Let $\mc{V} \subset C(\bgamp)$, and suppose that
  $\rho \in \semiconj_{\mc{V}}$ has the same combinatorics as
  $\rho_0$. Then every point in $\bgamp$ has a
  $(\mc{G}, \rho)$-coding.
\end{lemma}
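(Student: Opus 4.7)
The plan is to follow the template of Lemma~\ref{lem:codings_exist}, with the $\rho_0$-action replaced throughout by $\rho$ and the covers $\{V(z)\}, \{\hat{V}(z)\}$ replaced by $\{V_\rho(z)\}, \{\hat{V}_\rho(z)\}$. The four conditions in Definition~\ref{defn:same_combinatorics} are exactly the combinatorial ingredients needed for such an inductive construction to go through.

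Given $\zeta \in \bgamp$, I would first use the cover condition~\ref{item:perturbed_covering} to pick $z_0 \in Z$ with $\zeta \in V_\rho(z_0)$, and then build an edge path $e_1, e_2, \ldots$ with labels $\alpha_k$ and terminal vertices $z_k = \tail(e_k)$, maintaining the invariant
\[ \zeta_n := \rho(\alpha_n)^{-1}\cdots\rho(\alpha_1)^{-1}\zeta \in V_\rho(z_n). \]
When $z_n$ is conical, $L(z_n) = \{\alpha_{z_n}\}$ is a singleton, so setting $\alpha_{n+1} = \alpha_{z_n}$ the very definition of $\hat{V}_\rho(z_n) = \rho(\alpha_{z_n})^{-1}V_\rho(z_n)$ gives $\zeta_{n+1}\in\hat{V}_\rho(z_n)$. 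Applying the cover condition once more to choose $z_{n+1}$ with $\zeta_{n+1} \in V_\rho(z_{n+1})$, the stable-intersections condition~\ref{item:perturbed_intersections} converts the nonempty intersection $\hat{V}_\rho(z_n) \cap V_\rho(z_{n+1})$ into $\hat{V}(z_n) \cap V(z_{n+1}) \ne \emptyset$, producing the required edge in $\mc{G}$ labeled by $\alpha_{n+1}$.

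The parabolic case is where the real bookkeeping lies. If $z_n = \rho_0(g)p$ is parabolic, then $\phi_{z_n}(\zeta_n) \in V(z_n)$ by definition of $V_\rho(z_n) = \phi_{z_n}^{-1}(V(z_n))$. If this image equals $z_n$, we terminate: $\zeta_n \in \phi_{z_n}^{-1}(z_n)$ gives $\zeta \in \rho(\alpha_1\cdots\alpha_n)\phi_{z_n}^{-1}(z_n)$, which is the parabolic coding condition. Otherwise, property~\ref{item:edgedefinition} of Proposition~\ref{prop:C_properties} lets us write $\phi_{z_n}(\zeta_n) = \rho_0(\alpha_{n+1})x$ for some $\alpha_{n+1} \in L(z_n) \subset g\Gamma_p$ and some $x \in \hat{V}(z_n) = \hat{V}(p)$. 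The critical calculation is then to invoke Lemma~\ref{lem:coset_semiconjugacy}, which gives
\[ \rho_0(\alpha_{n+1})\phi_p(\rho(\alpha_{n+1})^{-1}\zeta_n) = \phi_{z_n}(\zeta_n) = \rho_0(\alpha_{n+1})x, \]
whence $\phi_p(\zeta_{n+1}) = x \in \hat{V}(p)$, i.e. $\zeta_{n+1} \in \phi_p^{-1}(\hat{V}(p)) = \hat{V}_\rho(z_n)$. From there the edge is constructed exactly as in the conical case.

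Finally, if the procedure never terminates, the containment condition~\ref{item:perturbed_v_in_w} together with the invariant yields $\zeta \in \rho(\alpha_1)\cdots\rho(\alpha_k)W(z_k)$ for every $k$, giving a conical $(\mc{G},\rho)$-coding of $\zeta$. The main obstacle is not conceptual but notational: one must translate cleanly between the $\rho$- and $\rho_0$-actions at parabolic vertices via the conjugating semi-conjugacies $\phi_{z_n}$, which is precisely what Lemma~\ref{lem:coset_semiconjugacy} was designed to facilitate.
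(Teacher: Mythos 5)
Your proposal is correct and follows essentially the same route as the paper's proof: both carry out the inductive construction of Lemma~\ref{lem:codings_exist} with $V_\rho(z)$, $\hat{V}_\rho(z)$ in place of $V(z)$, $\hat{V}(z)$, use the cover and stable-intersection conditions of Definition~\ref{defn:same_combinatorics} to keep the induction going, and invoke Lemma~\ref{lem:coset_semiconjugacy} at parabolic vertices to convert $\rho_0(\alpha_{n+1}^{-1})\phi_{z_n}(\zeta_n) \in \hat{V}(z_n)$ into $\zeta_{n+1} \in \hat{V}_\rho(z_n)$. The termination case and the non-terminating (conical) case are both handled the same way as in the paper.
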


\begin{proof}
  The proof follows the same strategy of Lemma
  \ref{lem:codings_exist}.  The first part of the inductive procedure
  goes through verbatim, replacing the sets $V(z)$ with $V_\rho(z)$,
  and elements $\alpha_i$ with $\rho(\alpha_i)$.  The only
  modification required occurs at the inductive step when
  $z_{n} = \tail(e_n)$ is a parabolic rather than conical limit
  point.  In this case we simply need to pay attention to the
  semi-conjugacies $\phi_{z_n}$.

  In detail, adopting the
  notation and setting from the proof of Lemma
  \ref{lem:codings_exist}, suppose we are given $\zeta \in \bgamp$,
  and assume we have found a partial coding so that
  $\zeta \in \rho(\alpha_1 \cdots \alpha_n) V_\rho(z_n)$ where $z_n$
  is a parabolic point. If
  $\zeta \in \rho(\alpha_1 \cdots \alpha_n)\phi_{z_n}^{-1}(z_n)$, then
  we have found a parabolic $(\mc{G}, \rho)$-coding for $\zeta$ and
  are done.  Otherwise,
  $\zeta \notin \rho(\alpha_1 \cdots \alpha_n)\phi_{z_n}^{-1}(z_{n})$.
  Let $\zeta_n := \rho(\alpha_1 \cdots \alpha_n)^{-1}\zeta$, so that
  $\zeta_n \in V_\rho(z_n) \minus \phi_{z_n}^{-1}(z_n)$. Then
  $\phi_{z_n}(\zeta_n) \in V(z_n) \minus \{z_n\}$ by the definition of
  $V_\rho(z_n)$. So, by property \ref{item:edgedefinition} of the
  $\rho_0$-automaton, there is some $\alpha_{n+1} \in L(z_n)$ so that
  $\rho_0(\alpha_{n+1}^{-1})\phi_{z_n}(\zeta_n) \in \hat{V}(z_n)$.

  We can then apply Lemma \ref{lem:coset_semiconjugacy} (with
  $x = \rho(\alpha_{n+1}^{-1})\zeta_n$) to see that
  \[
    \rho_0(\alpha_{n+1}^{-1})\phi_{z_n}(\zeta_n) =
    \phi_p(\rho(\alpha_{n+1}^{-1})\zeta_n).
  \]
  We conclude that
  $\phi_p(\rho(\alpha_{n+1}^{-1}\zeta_n)) \in \hat{V}(z_n)$. So,
  setting $\zeta_{n+1} := \rho(\alpha_{n+1}^{-1})\zeta_n$, we have
  \[
    \zeta_{n+1} \in \phi_p^{-1}(\hat{V}(z_n)) = \hat{V}_\rho(z_n).
  \]
  Since the $V_\rho(z)$ sets still cover $\bgamp$, there is some
  $z_{n+1}$ with $\zeta_{n+1} \in V_\rho(z_{n+1})$. Since $\rho$ has
  the same combinatorics as $\rho_0$, and $\zeta_n$ lies in
  $\hat{V}_\rho(z_n) \cap V_\rho(z_{n+1})$ there is an edge in
  $\mc{G}$ from $z_n$ to $z_{n+1}$, which completes the inductive
  step.
\end{proof}

\begin{convention}[Choosing constants] \label{conv:NDeps}
For each pair $s, s'$ in $\mathcal{S} \cup \{\idgamma\}$, Proposition \ref{prop:unif_nest} gives a constant $c(s, s')$ such that any generalized codings of a common point of the form $(s, \coding{e})$ and $(s', \coding{f})$ satisfy {\em $c$-uniform nesting}.  Fix $\smallepsilon < \min\{ c(s, s') \st s, s' \in \mathcal{S} \cup \{\idgamma\} \}$. 

Proposition \ref{prop:unif_nest} now states that, for each pair
$(s, s')$, for this fixed $\smallepsilon$ there exist constants
$N(s,s'), D_1(s,s')$ and $D_2(s,s')$ so that, for every pair of
generalized codings $(s, \coding{e})$ and $(s', \coding{f})$ of a
common conical point, uniform nesting is satisfied with the constants
$N(s,s'), D_1(s,s')$ and $D_2(s,s')$ and $\smallepsilon$.  Fix $N$, $D_1$
and $D_2$ greater than the maximum of all such respective constants
ranging over all pairs $s,s' \in \mc{S} \cup \{\idgamma\}$.
\end{convention} 

We note that these constants also implicitly depended on $\bigepsilon$, which was specified in Definition \ref{def:epsilon} using our target neighborhood $\mc V$, and will reappear in Lemma \ref{lem:close_to_id}.

\begin{lemma}
  \label{lem:open_nbhds_exist}
  There exists an open neighborhood $\mc{U}$ of $\rho_0$ in
  $\Hom(\Gamma, \bgamp)$ and an open neighborhood $\mc{V}'$ of the
  identity in $C(\bgamp)$ such that for any
  $\rho \in \mc{U} \cap \semiconj_{\mc{V}'}$, the following hold.   
  \begin{enumerate}[label=(V\arabic*)]
  \item \label{item:same_combinatorics} The representation $\rho$ has
    the same combinatorics as $\rho_0$.
  \item \label{item:short_word_stable_nest} 
  For any 
    $g \in \Gamma$
    satisfying $|g|_X \le D_1 + D_2 N$, 
    and any $y, z \in Z$, if
    $\rho_0(g) \overline{W(z)} \subset  W(y)$, then
    $\rho(g) \overline{W(z)} \subset W(y)$.
      \item \label{item:parabolic_nbhd_stable_nest} 
      For any 
      $q, y \in Z$ with $q$ parabolic, and any $g \in \Gamma$
    satisfying $|g|_X \le D_1$, if
    $\rho_0(g)\overline{B}_{3\smallepsilon}(q) \subset  
    W(y)$, then
    $\rho(g)\overline{B}_{3\smallepsilon}(q) \subset 
    W(y)$.
  \item \label{item:long_parabolic_stable_nest} For every parabolic
    vertex $q \in Z$ and every edge in $\mc{G}$ from $q$ to $y$
    labeled by $\alpha$, if
    $\rho_0(\alpha) N_{\bigepsilon}(W(y)) \subset B_{\smallepsilon}(q)$,
    then $\rho(\alpha)W(y) \subset B_{3\smallepsilon}(q)$.
  \end{enumerate}
\end{lemma}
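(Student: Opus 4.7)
The plan is to handle the four conditions one at a time, shrinking $\mc{U}$ and $\mc{V}'$ as needed at each step and then intersecting. Condition \ref{item:same_combinatorics} is immediate from Lemma \ref{lem:same_comb_open}. For \ref{item:short_word_stable_nest} and \ref{item:parabolic_nbhd_stable_nest}, the key observation is that $X$ is locally finite, so the set of $g \in \Gamma$ with $|g|_X \le D_1 + D_2 N$ is finite. Together with the finiteness of $Z$, each of these two conditions amounts to a finite collection of inclusions $\rho(g) K \subset U$ with $K \subset \bgamp$ compact and $U \subset \bgamp$ open, each of which holds for $\rho = \rho_0$. Such an inclusion is an open condition in $\rho$ for the compact-open topology on $\Hom(\Gamma, \Homeo(\bgamp))$, so a further shrinking of $\mc{U}$ secures them all simultaneously.

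The main obstacle is condition \ref{item:long_parabolic_stable_nest}: for each parabolic vertex $q \in Z$, the label set $L(q)$ is infinite and contains elements of unbounded $X$-length, so we cannot hope to make the inclusion $\rho(\alpha)W(y) \subset B_{3\smallepsilon}(q)$ stable by appealing to continuity of $\rho \mapsto \rho(\alpha)$ alone. Instead I would exploit the semi-conjugacy relation from Lemma \ref{lem:coset_semiconjugacy}: if $q = \rho_0(g)p$ with $p \in \Pi$, then for every $\alpha \in L(q) \subset g\Gamma_p$ and every $x \in \bgamp$,
\[
\phi_q(\rho(\alpha)x) \;=\; \rho_0(\alpha)\phi_p(x).
\]
This identity converts control of $\rho(\alpha)$ over the infinite family $L(q)$ into control of the \emph{finitely many} auxiliary maps $\phi_p$ (one per $p \in \Pi$) and $\phi_q$ (one per parabolic $q \in Z$), which is much more tractable.

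The implementation is then straightforward. First shrink $\mc{V}'$ so that each $\phi_p$ with $p \in \Pi$ moves points by less than $\bigepsilon$; then $\phi_p(W(y)) \subset N_{\bigepsilon}(W(y))$ for every $y \in Z$. Since $\phi_q = \rho_0(g)\phi_p\rho(g)^{-1}$ depends continuously on $\rho$, and there are only finitely many parabolic vertices $q \in Z$, a further shrinking of $\mc{U}$ ensures that each such $\phi_q$ moves points by less than $2\smallepsilon$. Now given an edge from parabolic $q$ to $y$ labeled $\alpha$ satisfying the hypothesis of \ref{item:long_parabolic_stable_nest}, for any $x \in W(y)$ we have $\rho_0(\alpha)\phi_p(x) \in \rho_0(\alpha)N_{\bigepsilon}(W(y)) \subset B_{\smallepsilon}(q)$, so $\phi_q(\rho(\alpha)x) \in B_{\smallepsilon}(q)$; combined with the $2\smallepsilon$ bound on $\phi_q$, this gives $\rho(\alpha)x \in B_{3\smallepsilon}(q)$, as required. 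Intersecting the neighborhoods produced at each stage yields the desired $\mc{U}$ and $\mc{V}'$.
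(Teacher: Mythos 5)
Your proposal follows the same overall plan as the paper: dispose of \ref{item:same_combinatorics} via Lemma~\ref{lem:same_comb_open}, handle \ref{item:short_word_stable_nest} and \ref{item:parabolic_nbhd_stable_nest} by observing they reduce to finitely many open conditions, and use the peripheral semi-conjugacy relation to tame the infinite label set $L(q)$ for \ref{item:long_parabolic_stable_nest}. The key idea is the same. Where you diverge cosmetically is in packaging: you invoke Lemma~\ref{lem:coset_semiconjugacy} to transfer the hypothesis across $\phi_q$ and then want $\phi_q$ itself to be $2\smallepsilon$-close to the identity; the paper instead factors $\alpha = t_q\alpha'$, commutes $\phi_p$ past $\rho(\alpha')$ by hand, and splits the control between a condition on $\phi_p$ (shrinking $\mc{V}'$) and a condition on $\rho(t_q)\rho_0(t_q)^{-1}$ (shrinking $\mc{U}$). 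These are the same calculation, since $\phi_q = \rho_0(t_q)\phi_p\rho(t_q)^{-1}$.

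There is, however, a real slip in your write-up. You assert that ``a further shrinking of $\mc{U}$ ensures that each such $\phi_q$ moves points by less than $2\smallepsilon$,'' justifying this by the continuity of $\phi_q$ in $\rho$. But $\phi_q = \rho_0(t_q)\phi_p\rho(t_q)^{-1}$ depends on \emph{both} $\rho$ and $\phi_p$, and as $\rho \to \rho_0$ with $\phi_p$ held fixed, $\phi_q$ converges to $\rho_0(t_q)\phi_p\rho_0(t_q)^{-1}$, which need not be close to the identity. Shrinking $\mc{U}$ alone is therefore not enough. You must also shrink $\mc{V}'$ so that $\phi_p$ is within $\delta$ of the identity, where $\delta$ is chosen (via uniform continuity of the finitely many fixed homeomorphisms $\rho_0(t_q)$ on the compact space $\bgamp$) so that conjugation by $\rho_0(t_q)$ of a $\delta$-close-to-identity map is $\smallepsilon$-close to the identity. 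Note also that the $\bigepsilon$-closeness of $\phi_p$ you imposed in the previous sentence does not automatically supply this: $\bigepsilon$ and $\smallepsilon$ are unrelated constants in the paper, and the relevant threshold $\delta$ for controlling $\phi_q$ depends on $\smallepsilon$ and the $t_q$, not on $\bigepsilon$. With that correction --- shrink both $\mc{U}$ and $\mc{V}'$ at this stage --- your argument goes through.
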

\begin{proof}
  Lemma~\ref{lem:same_comb_open} shows that having 
  the same combinatorics is a relatively open condition, thus we
  can find $\mc{U}$, $\mc{V}'$ so that Item
  \ref{item:same_combinatorics} holds. Both
  \ref{item:short_word_stable_nest} and
  \ref{item:parabolic_nbhd_stable_nest} correspond to open conditions
  on finitely many elements in $\Gamma$.
  Thus, we need only demonstrate that by further shrinking 
  $\mc{V}'$ and $\mc{U}$ if needed, we may satisfy
  \ref{item:long_parabolic_stable_nest}.  

  As in the proof of Lemma \ref{lem:para_jump}, for each parabolic vertex $q \in Z$, we choose some $t_q \in \Gamma$ 
  so that $q = \rho_0(t_q)p$ for some $p \in \Pi$. We choose our
  neighborhood $\mc{U}$ of $\rho_0$ small enough so that for any $\rho \in \mc{U}$
  and every parabolic vertex $q \in Z$, we have
  \begin{equation}
    \label{eq:coset_inclusion}
    \rho(t_q)  \rho_0(t_q)^{-1}B_{2\smallepsilon}(q) \subset
    B_{3\smallepsilon}(q).
  \end{equation}

  Choose $\mc{V}'$ sufficiently small so that for any
  $\phi \in \mc{V}'$ and every $y \in Z$, we have 
 \begin{equation} \label{eq:chooseVprime}
 \phi(W(y)) \subset N_{\bigepsilon}(W(y))
 \end{equation} 
   and for every parabolic vertex 
  $q = \rho_0(t_q)p \in Z$, we have 
  \[ \phi^{-1}(\rho_0(t_q))^{-1}B_{\smallepsilon}(q)) \subset \rho_0(t_q)^{-1}B_{2\smallepsilon}(q).\]
  
  Let $\rho \in \mc{U} \cap \semiconj_{\mc{V}'}$. Fix a parabolic
  vertex
  $q = t_q p \in Z$. Suppose that for some edge $e$ from $q$ to
  $y$ labeled by $\alpha \in L(q) \subset t_q\Gamma_p$, we have
  $\rho_0(\alpha) N_{\bigepsilon}(W(y)) \subset B_{\smallepsilon}(q)$. We
  may write $\alpha = \rho_0(t_q)\alpha'$\ 
  for some $\alpha' \in \Gamma_p$, so
  $\rho_0(\alpha')N_{\bigepsilon}(W(y)) \subset
  \rho_0(t_q)^{-1}B_{\smallepsilon}(q)$.
  
  Also by our choice of $\mc{V}'$ and \eqref{eq:chooseVprime}, we have
$\phi_p(W(y)) \subset N_{\bigepsilon}(W(y)) $ for all $p \in \Pi$ and $y \in Z$.   

  From this it follows that 
  \begin{align*}
    \rho(\alpha')W(y) &\subset
                        \rho(\alpha')\phi_p^{-1}N_{\bigepsilon}(W(y))\\
                      &= \phi_p^{-1}(\rho_0(\alpha') N_{\bigepsilon}(W(y)))\\
                      &\subset \phi_p^{-1}(\rho_0(t_q)^{-1}B_{\smallepsilon}(q))\\
                      &\subset \rho_0(t_q)^{-1}B_{2\smallepsilon}(q).
  \end{align*}
  Putting this together with \eqref{eq:coset_inclusion}, we conclude that 
  \[
    \rho(\alpha)W(y) = \rho(t_q)\rho(\alpha')W(y) \subset \rho(t_q)
    \rho_0(t_q)^{-1}B_{2\smallepsilon}(q) \subset B_{3\smallepsilon}(q)
  \]
  as desired.  
\end{proof}

\subsection{Defining the semi-conjugacy}

For the rest of the paper, we fix open sets
$\mc{U} \subset \Hom(\Gamma, \Homeo(\bgamp))$ and
$\mc{V}' \subset C(\bgamp)$ satisfying the conditions of Lemma
\ref{lem:open_nbhds_exist}. We let $\rho:\Gamma \to \Homeo(\bgamp)$ be
a representation in $\mc{U} \cap \semiconj_{\mc{V}'}$. We recall from
Remark~\ref{rem:parabolic_semiconj} that this means that for each
$p \in \Pi$, we fix a semi-conjugacy $\phi_{p} \in \mc{V}'$,
such that $\rho|_{\Gamma_p}$ has $\rho_0|_{\Gamma_p}$ as a factor, via
$\phi_{p}$. In turn this determines semi-conjugacies $\phi_z$
extending $\rho|_{\Gamma_z}$ for each parabolic $z \in \bgamp$ with
stabilizer $\Gamma_z$.

Our goal is to show that $\rho$ is semi-conjugate to $\rho_0$ via some
map $\phi$. We now set about constructing a map $\phi$, and will then
show that it is indeed a well-defined semi-conjugacy, and in the
neighborhood $\mc{V}$ of the identity that was fixed in the first paragraph of Section
\ref{subsec:fixing}.  The general strategy is to use codings to assign
a well-defined closed subset $\Phi(\zeta) \subset \bgamp$ to each
$\zeta \in \bgamp$, then specify that $\phi$ send $\Phi(\zeta)$ to
$\zeta$.  To ensure this gives a well-defined continuous map, we need
the following lemma.

\begin{lemma}
  \label{lem:well-defined}
  Let $\zeta$ be a conical limit point in $\bgamp$, and
  $s \in \mathcal{S} \cup \{\idgamma\}$. Suppose that
  $\coding{e} = (e_k)_{k \in \bN}$ is a strict $(\mc{G}, \rho_0)$-coding
  of $\rho_0(s)\zeta$, and $\coding{f} = (f_k)_{k \in \bN}$ is a strict
  $(\mc{G}, \rho_0)$-coding of $\zeta$; equivalently $(s, \coding{f})$
  is a generalized $(\mc{G}, \rho_0)$-coding of $\rho_0(s)\zeta$.  Then, for $\alpha_k = \lab(e_k)$ and
  $\beta_k = \lab(f_k)$, we have 
  \[
    \bigcap_{k=1}^\infty \rho(\alpha_1) \cdots \rho(\alpha_k)
    \overline{W(\tail(e_{k}))} = \rho(s)\bigcap_{k=1}^\infty \rho(\beta_1)
    \cdots \rho(\beta_k) \overline{W(\tail(f_{k}))}.
  \]
\end{lemma}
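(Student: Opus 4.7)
The plan is to treat $\coding{e}$ and $(s, \coding{f})$ as two generalized $(\mc{G}, \rho_0)$-codings of the same conical point $\rho_0(s)\zeta$, invoke Proposition \ref{prop:unif_nest}, and then use Lemma \ref{lem:open_nbhds_exist} to transfer the $\rho_0$-nesting given by uniform nesting to the corresponding $\rho$-nesting statements. Write $A_k = \rho(\alpha_1)\cdots\rho(\alpha_k)\overline{W(\tail(e_k))}$ and $B_k = \rho(\beta_1)\cdots\rho(\beta_k)\overline{W(\tail(f_k))}$, so that $\rho(s)B_k = \rho(h_k)\overline{W(\tail(f_k))}$ where $g_k = \alpha_1\cdots\alpha_k$ and $h_k = s\beta_1\cdots\beta_k$ are the associated quasi-geodesic sequences. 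By property \ref{item:perturbed_edge_nesting} of the ``same combinatorics'' condition, each of $(A_k)$ and $(\rho(s)B_k)$ is a nested decreasing sequence of non-empty closed subsets of the compact space $\bgamp$, so both intersections are non-empty. It therefore suffices to prove $\bigcap_k A_k \subset \bigcap_k \rho(s)B_k$; the reverse inclusion will follow by applying the same argument with the roles of the two codings swapped.

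To prove the forward inclusion, apply Proposition \ref{prop:unif_nest} to the pair $\coding{e},(s,\coding{f})$, using the global constants $N,D_1,D_2,\smallepsilon$ from Convention \ref{conv:NDeps}. In the short-word case, we obtain sequences $n_k,m_k \to \infty$ with $d_X(g_{n_k},h_{m_k}) \le D_1$, labels $|\alpha_{n_k+j}|_X \le D_2$ for $1\le j\le N$, and the inclusion
\begin{equation*}
  g_{n_k+N}\overline{W(\tail(e_{n_k+N}))} \subset h_{m_k} W(\tail(f_{m_k})).
\end{equation*}
Rewriting as $\rho_0(h_{m_k}^{-1}g_{n_k+N})\overline{W(\tail(e_{n_k+N}))} \subset W(\tail(f_{m_k}))$ and noting that $|h_{m_k}^{-1}g_{n_k+N}|_X \le D_1 + ND_2$, property \ref{item:short_word_stable_nest} of Lemma \ref{lem:open_nbhds_exist} yields the same inclusion with $\rho_0$ replaced by $\rho$; multiplying through by $\rho(h_{m_k})$ gives $A_{n_k+N} \subset \rho(s)B_{m_k}$.

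In the long-parabolic case, the sequences $n_k,m_k$ satisfy $d_X(g_{n_k},h_{m_k}) \le D_1$, with $\tail(e_{n_k}) = q_k$ parabolic, and we are given the two inclusions $g_{n_k}\overline{B}_{3\smallepsilon}(q_k) \subset h_{m_k}W(\tail(f_{m_k}))$ and $\alpha_{n_k+1} N_{\bigepsilon}(W(\tail(e_{n_k+1}))) \subset B_{\smallepsilon}(q_k)$. Property \ref{item:parabolic_nbhd_stable_nest} (with $g = h_{m_k}^{-1}g_{n_k}$, so $|g|_X\le D_1$) transfers the first to $\rho(g_{n_k})\overline{B}_{3\smallepsilon}(q_k) \subset \rho(h_{m_k})W(\tail(f_{m_k}))$, and property \ref{item:long_parabolic_stable_nest} transfers the second to $\rho(\alpha_{n_k+1})W(\tail(e_{n_k+1})) \subset B_{3\smallepsilon}(q_k)$, hence (by taking closures) $\rho(\alpha_{n_k+1})\overline{W(\tail(e_{n_k+1}))} \subset \overline{B}_{3\smallepsilon}(q_k)$. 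Chaining these yields $A_{n_k+1} \subset \rho(s)B_{m_k}$. In either case we have a cofinal family of inclusions, so $\bigcap_k A_k \subset \bigcap_k \rho(s)B_k$, and the reverse inclusion follows by symmetry. The main bookkeeping obstacle is the parabolic case: property \ref{item:long_parabolic_stable_nest} produces a containment in $B_{3\smallepsilon}(q_k)$ rather than in $B_{\smallepsilon}(q_k)$, and one must check that this loss is precisely compensated by property \ref{item:parabolic_nbhd_stable_nest}, whose hypothesis concerns the closed $3\smallepsilon$-ball, so the two lifted inclusions chain without losing control.
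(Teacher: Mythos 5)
Your proof is correct and follows essentially the same route as the paper: apply Proposition \ref{prop:unif_nest} to the generalized codings $(\idgamma,\coding{e})$ and $(s,\coding{f})$, transfer the $\rho_0$--nesting statements to $\rho$ via properties \ref{item:short_word_stable_nest}, \ref{item:parabolic_nbhd_stable_nest}, and \ref{item:long_parabolic_stable_nest}, then use cofinality plus symmetry of $\mathcal{S}$ for the reverse inclusion. The only (harmless) difference is that in the long-parabolic case you land on $A_{n_k+1}$ by taking closures directly, whereas the paper steps one further to $A_{n_k+2}$; both are valid.
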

\begin{proof}
  For each $k \in \bN$, write $W^{\coding{e}}(k) = W(\tail(e_k))$ and
  $W^{\coding{f}}(k) = W(\tail(f_k))$.  Let $g_k$ and $h_k$ be the
  associated quasi-geodesic sequences to the codings
  $\coding{e} = (\idgamma, \coding{e})$ and $(s, \coding{f})$,
  respectively (so $h_0 = s$).  Then
  $h'_k := s^{-1}h_k = \beta_1 \cdots \beta_k$ is the associated
  quasi-geodesic sequence to the strict coding
  $\coding{f}$.

  We will prove the inclusion
  \begin{equation}
    \label{eq:perturbed_nesting_2}
    \bigcap_{k=1}^\infty \rho(g_k)\overline{W^{\coding{e}}(k )}
    \subseteq \rho(s)\bigcap_{k=1}^\infty \rho(h'_k)\overline{W^{\coding{f}}(k)}.
  \end{equation}
  Since $\mathcal{S}$ is symmetric, the other inclusion then follows
  immediately. 
  
  Since both intersections in
  \eqref{eq:perturbed_nesting_2} are given by decreasing sequences of
  sets, it suffices to show that we can find arbitrarily large pairs of
  indices $L, R$ so that
  \[
    \rho(g_L)\overline{W^{\coding{e}}(L)} \subseteq
    \rho(s)\rho(h'_R)\overline{W^{\coding{f}}(R)}. 
  \]

  Since both $(\idgamma, \coding{e})$ and $(s, \coding{f})$ are
  generalized $(\mc{G}, \rho_0)$-codings of the point $\rho_0(s)\zeta$,
  Proposition \ref{prop:unif_nest} and our choice of constants imply
  that $(\idgamma, \coding{e})$ and $(s, \coding{f})$ have the uniform
  nesting property, with respect to the constants chosen in Convention
  \ref{conv:NDeps}.  Thus, there exist sequences $n_k$, $m_k$
  satisfying the properties in Definition \ref{def:unif_nest} for the
  corresponding pair of quasi-geodesic sequences.  Let $L$ be one such
  choice of $n_k$, and let $R$ denote $m_k$ (for the same $k$).  Thus,
  $|h_R^{-1}g_L|_X \leq D_1$.

If {\em uniform nesting with short words} holds, we have:
  \begin{equation}
    \rho_0(g_{L + N})\overline{W^{\coding{e}}(L + N )} \subset
    \rho_0(h_R) W^{\coding{f}}(R) 
  \end{equation}
  and $|\alpha_{L + k}|_X \le D_2$ for all $k \in \mathbb{N}$. 
  
  Let $g_{(L + 1, L + N)} = \alpha_{L + 1} \cdots
  \alpha_{L + N}$, so that
  \[
    g_{L + N} = g_L \cdot  g_{(L + 1, L + N)}.
  \]  
  
  Thus,
  \[
    \rho_0((h_R)^{-1}g_L) \rho_0(g_{(L + 1, L + N)})
    \overline{W^{\coding{e}}(L + N)} \subset W^{\coding{f}}(R
    ).
  \]
    Since each $\alpha_{L + k}$ has length at most $D_2$, we know
  $|g_{(L + 1, L + N)}|_X \le ND_2$.  By definition, $|h_R^{-1}g_L|_X \leq D_1$, and therefore
  $|(h_R)^{-1}g_L\, g_{(L + 1, L + N)}|_X \le D_1 + ND_2$
  
Thus, by condition  \ref{item:short_word_stable_nest} of Lemma \ref{lem:open_nbhds_exist} and our choice of $\mathcal{U}$, we have 
    \[ \rho(g_{L + N})\overline{W^{\coding{e}}(L + N )} \subset
    \rho(h_R) W^{\coding{f}}(R) = \rho(s) \rho( h'_R) W^{\coding{f}}(R) \]
 which is what we needed to show.

  If instead we have uniform nesting with long parabolics, then
  $z_L = \tail(e_L) = \head(e_{L+1})$ is parabolic, and we have  
  \begin{equation}
    \label{eq:translate_par_nbhd}
    \rho_0(g_L)\overline{B}_{3\smallepsilon}(z_L) \subset \rho_0(h_R)W^{\coding{f}}(R),
  \end{equation}
  \begin{equation}
    \label{eq:deep_nest_before_perturb}
    \rho_0(\alpha_{L + 1})N_{\bigepsilon}(W^{\coding{e}}(L + 1)) \subset B_{\smallepsilon}(z_L).
  \end{equation}
  Since $|(h_R)^{-1}g_L|_X \le D_1$, \eqref{eq:translate_par_nbhd},
  together with condition \ref{item:parabolic_nbhd_stable_nest} on the
  neighborhood $\mc{U}$, implies that
  \[
    \rho(g_L)\overline{B}_{3\smallepsilon}(z_L) \subset
    \rho(h_R)W^{\coding{f}}(R).
  \]
  And, our assumption \ref{item:long_parabolic_stable_nest}, together
  with \eqref{eq:deep_nest_before_perturb}, implies that
  \[
    \rho(\alpha_{L + 1})W^{\coding{e}}(L + 1) \subset
    B_{3\smallepsilon}(z_{L}).
  \]
  We can then conclude that
  \[
    \rho(g_{L + 1})W^{\coding{e}}(L + 1) \subset
    \rho(h_R)W^{\coding{f}}(R).
  \]
  This means that
  $\rho(g_{L + 2})\overline{W^{\coding{e}}(L + 2)} \subset
  \rho(h_R)W^{\coding{f}}(R) = \rho(s)\rho(h'_R)W^{\coding{f}}(R)$, and we are done.  
\end{proof}

\begin{definition}
  Define a map $\Phi$ from $\bgamp$ to the space of closed subsets
  of $\bgamp$ as follows:
  \begin{itemize}
  \item If $p \in \bgamp$ is a conical limit point, choose a strict
    ($\mc{G}$, $\rho_0$)-coding of $p$ with terminal vertex sequence 
    $(z_k)_{k \in \bN}$ and label sequence $(\alpha_k)_{k \in \bN}$,
    and define
    \[
      \Phi(p) = \bigcap_{k=1}^\infty \rho(\alpha_1 \cdots
      \alpha_k)\overline{W(z_k)}.
    \]
  \item If $q \in \bgamp$ is a parabolic point, then we choose some
    $g \in \Gamma$ so that $q = \rho_0(g)p$ for a point $p \in
    \Pi$. Then define
    \[
      \Phi(q) = \rho(g)\phi_p^{-1}(p).
    \]
  \end{itemize}
\end{definition}

Taking $s= \idgamma$ in Lemma \ref{lem:well-defined} shows that the map $\Phi$ is
well-defined on conical limit points.  This lemma also implies that
$\Phi(\rho_0(s)z) = \rho(s)\Phi(z)$ for any $s \in \mathcal{S}$ and
conical $z$, which means that $\Phi$ is equivariant on conical limit
points.

To show $\Phi$ is well-defined on parabolic points, suppose
a parabolic point $q$ satisfies $q = \rho_0(h)p_i = \rho_0(g)p_j$ for
$p_i, p_j \in \Pi$.  Let $P_i = \Gamma_{p_i}$ and  $P_j  = \Gamma_{p_j}$.
Then $p_i = p_j$ and $h \in gP_i$, so
$\rho(g)\phi_{P_i}^{-1}(p_i) = \rho(h)\phi_{P_j}^{-1}(p_j)$. The 
same reasoning shows that in fact $\Phi$ is equivariant on parabolic
points: for any $g \in \Gamma$ and parabolic $q \in \bgamp$, we have
$\rho(g)\Phi(q) = \Phi(\rho_0(g) q)$.
We record this fact for future use.  

\begin{lemma}
  \label{lem:equivariant}
  The map $\Phi$ is equivariant, in the sense that for any
  $g \in \Gamma$ and $\zeta \in \bgamp$, we have
  \[
    \Phi(\rho_0(g) \zeta) = \rho(g)\Phi(\zeta).
  \]
\end{lemma}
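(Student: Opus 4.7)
The plan is to split into cases according to whether $\zeta$ is conical or parabolic, which is legitimate because $\rho_0$ acts on $\bgamp$ by homeomorphisms preserving each of these two dynamically-defined classes.

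The parabolic case is essentially a direct computation, already sketched in the paragraph just before the statement. If $\zeta = \rho_0(h)p$ for some $h\in\Gamma$ and $p\in\Pi$, then $\rho_0(g)\zeta = \rho_0(gh)p$ is again parabolic with the same representative $p\in\Pi$, so unpacking the definition of $\Phi$ on parabolic points gives
\[
\Phi(\rho_0(g)\zeta) \;=\; \rho(gh)\phi_p^{-1}(p) \;=\; \rho(g)\rho(h)\phi_p^{-1}(p) \;=\; \rho(g)\Phi(\zeta),
\]
where the first equality uses well-definedness of $\Phi$ on parabolic orbits (already noted above) and the middle equality uses that $\rho$ is a homomorphism.

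For the conical case, I first handle the generator case $g = s \in \mathcal{S}$. Use Lemma~\ref{lem:codings_exist} to choose a strict $(\mc{G},\rho_0)$-coding $\coding{f}$ of $\zeta$ and a strict $(\mc{G},\rho_0)$-coding $\coding{e}$ of $\rho_0(s)\zeta$; then the conclusion of Lemma~\ref{lem:well-defined} reads exactly $\Phi(\rho_0(s)\zeta) = \rho(s)\Phi(\zeta)$, matching the definition of $\Phi$ on conical points on both sides. For general $g\in\Gamma$, write $g = s_1\cdots s_n$ with $s_i\in\mathcal{S}$ and induct on $n$: since the intermediate image $\rho_0(s_k\cdots s_n)\zeta$ is again conical for each $k$, I iterate the generator case to obtain
\[
\Phi(\rho_0(g)\zeta) \;=\; \rho(s_1)\Phi(\rho_0(s_2\cdots s_n)\zeta) \;=\; \cdots \;=\; \rho(s_1)\cdots\rho(s_n)\Phi(\zeta) \;=\; \rho(g)\Phi(\zeta).
\]

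There is no genuine obstacle here: all the hard work — the uniform nesting property and the stability of the relevant inclusions under perturbation — has already been absorbed into Lemma~\ref{lem:well-defined}. The present lemma is a packaging step whose only role is to combine the parabolic bookkeeping with the generator-by-generator equivariance coming from Lemma~\ref{lem:well-defined}, so that $\Phi$ can later be used to descend to a well-defined semi-conjugacy $\phi$ between $\rho_0$ and $\rho$.
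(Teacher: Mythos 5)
Your proposal is correct and follows essentially the same approach as the paper: the conical case is reduced to Lemma~\ref{lem:well-defined} applied one generator at a time, and the parabolic case is the same direct computation using the definition of $\Phi$ and well-definedness on parabolic points. The only difference is that you spell out the generator-by-generator induction explicitly, whereas the paper leaves that routine step implicit after noting that Lemma~\ref{lem:well-defined} gives equivariance under each $s\in\mathcal{S}$.
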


Our next goal is to show that the sets $\Phi(\zeta)$ partition $\bgamp$.  
Towards this, we first observe that the sets $W(z)$ for $z \in Z$ can be
used to ``approximate'' the map $\Phi$. 

\begin{lemma}
  \label{lem:coding_approximates_point}
  Let $\zeta \in \bgamp$, and let $\coding{e}$ be a strict ($\mc{G}$,
  $\rho_0$)-coding for $\zeta$ whose first vertex is $z_0 \in Z$. Then
  we have both $\zeta \in W(z_0)$ and
  $\Phi(\zeta) \subset W(z_0)$.
\end{lemma}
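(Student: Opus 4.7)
Proof plan. Split into two cases depending on whether $\zeta$ is parabolic or conical. For the parabolic case, the coding $\coding{e}$ is a finite edge path $e_1, \ldots, e_n$ (possibly of length zero) terminating at a parabolic vertex $z_n$ with $\zeta = \rho_0(\alpha_1 \cdots \alpha_n)z_n$. Writing $z_n = \rho_0(t_{z_n})p$ for $p \in \Pi$ and unfolding the identity $\phi_{z_n} = \rho_0(t_{z_n})\phi_p \rho(t_{z_n})^{-1}$ gives $\Phi(\zeta) = \rho(\alpha_1 \cdots \alpha_n)\phi_{z_n}^{-1}(z_n)$. Since $z_n \in V(z_n)$, condition \ref{item:perturbed_v_in_w} of Definition~\ref{defn:same_combinatorics} yields $\phi_{z_n}^{-1}(z_n) \subset V_\rho(z_n) \subset W(z_n)$, and iterated application of the edge-nesting condition \ref{item:perturbed_edge_nesting} along $e_n, e_{n-1}, \ldots, e_1$ contracts $\rho(\alpha_1 \cdots \alpha_n)W(z_n)$ into $W(z_0)$, as required. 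The degenerate $n=0$ case reduces directly to $\Phi(\zeta) = \phi_{z_0}^{-1}(z_0) \subset V_\rho(z_0) \subset W(z_0)$.

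For the conical case, $\Phi(\zeta) = \bigcap_k \rho(\alpha_1 \cdots \alpha_k)\overline{W(z_k)}$ is nested inside the $k=1$ term $\rho(\alpha_1)\overline{W(z_1)}$, so it suffices to prove the closure-strengthened edge nesting $\rho(\alpha_1)\overline{W(z_1)} \subset W(z_0)$. I case-split on $z_0$ and the length of $\alpha_1$. If $z_0$ is conical, then $L(z_0) = \{\alpha_{z_0}\}$ is a single fixed element, the compact-in-open inclusion $\rho_0(\alpha_{z_0})\overline{W(z_1)} \subset W(z_0)$ follows from Proposition~\ref{prop:proper_nesting}, and openness transfers this to $\rho$ since only finitely many pairs $(z_0, z_1)$ arise. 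If $z_0$ is parabolic and $|\alpha_1|_X \le D_1 + D_2 N$, item \ref{item:short_word_stable_nest} of Lemma~\ref{lem:open_nbhds_exist} transports the closure nesting from $\rho_0$ directly to $\rho$. If $z_0$ is parabolic and $|\alpha_1|_X$ is large (outside the finite exceptional set of Remark~\ref{rem:finitely_many}), item \ref{item:long_parabolic_stable_nest} gives $\rho(\alpha_1)W(z_1) \subset B_{3\smallepsilon}(z_0)$; repeating the same chain of inclusions with closures (using Lemma~\ref{lem:coset_semiconjugacy} and the fact that $\phi_p$ carries closed sets into $\bigepsilon$-neighborhoods of their closures) upgrades this to $\rho(\alpha_1)\overline{W(z_1)} \subset \overline{B}_{3\smallepsilon}(z_0)$. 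This last set lies inside $W(z_0)$ as long as $\smallepsilon$ has been chosen small enough that $\overline{B}_{3\smallepsilon}(z_0) \subset W(z_0)$ for each of the finitely many parabolic vertices $z_0 \in Z$, which is possible because $W(z_0)$ contains an open ball about $z_0$.

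The main obstacle will be the third subcase of the conical part: establishing a uniform closure-version of the long-parabolic nesting across the infinite family of labels $\alpha_1 \in L(z_0)$. This may require an incremental strengthening of item \ref{item:long_parabolic_stable_nest} to closures, arranged by further shrinking $\mc{U}$, $\mc{V}'$, and $\smallepsilon$ beyond the conditions of Lemma~\ref{lem:open_nbhds_exist} and Convention~\ref{conv:NDeps}. By contrast, the parabolic case is essentially routine once one unpacks the definition of $\Phi$ via $\phi_{z_n}$ and iterates the open edge-nesting.
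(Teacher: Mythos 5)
Your parabolic case follows the paper's proof exactly. For the conical case, however, you take a more circuitous route than necessary, and in doing so you re-derive something the paper already has in hand.

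The paper treats the conical case as ``immediate from the definition of $\Phi$ and the fact that $\rho$ has the same combinatorics as $\rho_0$.'' The point is that the proof of Lemma~\ref{lem:same_comb_open} actually establishes condition~\ref{item:perturbed_edge_nesting} with closures, i.e.\ $\rho(\alpha)\overline{W(y)}\subset W(z)$: for conical $z$ this is exactly what the compact-in-open perturbation argument yields (one shows $\rho(\alpha)\overline{N}_\bigepsilon(W(y))\subset W(z)$), and for parabolic $z$ the chain of inclusions via $\phi_z^{-1}\phi_z$, Lemma~\ref{lem:coset_semiconjugacy}, and~\eqref{eq:ez} goes through with $\overline{W(y)}$ in place of $W(y)$ because $\phi_p\overline{W(y)}\subset N_\bigepsilon(W(y))$ and $\overline{N_\bigepsilon(W(y))}\subset N_{\epsilon_{\min}}(N_\bigepsilon(W(y)))$. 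With the closure version of~\ref{item:perturbed_edge_nesting}, one simply has $\Phi(\zeta)\subset\rho(\alpha_1)\overline{W(z_1)}\subset W(z_0)$.

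You instead reconstruct this closure nesting via a three-way split on the type of $z_0$ and the length of $\alpha_1$, invoking items~\ref{item:short_word_stable_nest} and~\ref{item:long_parabolic_stable_nest} of Lemma~\ref{lem:open_nbhds_exist} together with the choice $\overline{B}_{3c}(z)\subset W(z)$ built into Proposition~\ref{prop:unif_nest} (taking $f=\idgamma$ in the triple $(z,z,\idgamma)$). This works, and your hedging at the end is resolvable: the threshold $D_1+D_2N$ you use exceeds $D_2'(\smallepsilon)$, so the premise of~\ref{item:long_parabolic_stable_nest} does hold in the large-label subcase, and the closure strengthening of~\ref{item:long_parabolic_stable_nest} follows by the same two-step inclusion through $\rho_0(t_q)^{-1}\overline{B}_{2\smallepsilon}(q)$. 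But this is extra machinery: conditions~\ref{item:short_word_stable_nest}--\ref{item:long_parabolic_stable_nest} were engineered to stabilize the \emph{iterated, uniform} nesting from Proposition~\ref{prop:unif_nest} (to show $\Phi$ is well-defined), whereas here you only need a \emph{single-step} nesting, which is precisely what the proof of Lemma~\ref{lem:same_comb_open} already gives. Observing that directly would shorten your conical case to one line and eliminate the uncertainty you flag.
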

\begin{proof}
  If $\zeta$ is a conical point, the first conclusion is immediate
  from the definition of a strict $(\mc{G}, \rho_0)$-coding, and the
  second follows from the definition of $\Phi$ and the fact that
  $\rho$ has the same combinatorics as $\rho_0$. So suppose $\zeta$ is
  parabolic, and consider a finite $(\mc{G}, \rho_0)$-coding
  $\coding{e}$ for the parabolic point $\zeta$ (which exists by
  Corollary \ref{cor:para_coding}), with label sequence
  $\alpha_1, \ldots, \alpha_n$ and vertex sequence $z_0, \ldots,
  z_n$. Then $z_n$ is parabolic and
  $\zeta = \rho_0(\alpha_1 \cdots \alpha_n)z_n$. By property
  \ref{item:edgenest} of our original automaton, we have
  $z_n \in W(z_n)$. Applying \eqref{eq:edge_inclusions} inductively we
  get
  $\zeta \in \rho_0(\alpha_1 \cdots \alpha_n)W(z_n) \subset W(z_0)$,
  proving that the first conclusion holds.

  Now we turn to the second conclusion. By equivariance, we have
  $\Phi(\zeta) = \rho(\alpha_1 \cdots \alpha_n)\Phi(z_n)$. We know
  $z_n \in V(z_n)$ by Property \ref{item:edgedefinition} of our
  original automaton, so
  $\Phi(z_n) = \phi_{z_n}^{-1}(z_n) \subset V_\rho(z_n)$. Then,
  because $\rho$ has the same combinatorics as $\rho_0$, we know
  $\Phi(z_n) \subset W(z_n)$ (from part \ref{item:perturbed_v_in_w} of
  the definition) and therefore $\Phi(\zeta) \subset W(z_0)$ (from
  inductively applying part \ref{item:perturbed_edge_nesting}).
\end{proof}

The endgame of the proof of Theorem \ref{thm:main} is identical to that 
in the case without parabolics, given at the end of Section 4 in
\cite{MMW1}. For convenience, we repeat it below.

\begin{lemma}
  \label{lem:inverse_disjoint}
  For any distinct $a, b \in \bgamp$, the sets $\Phi(a), \Phi(b)$ are
  disjoint.
\end{lemma}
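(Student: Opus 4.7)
The plan is to exploit the fact that $\Phi(\zeta)$ is always trapped in a set of diameter less than $\bigepsilon$, and then apply equivariance of $\Phi$ (Lemma~\ref{lem:equivariant}) together with the separation constant $D$ from Definition~\ref{def:pair_separation_constant} to separate $\Phi(a)$ and $\Phi(b)$ after a suitable translation.

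The first step is to verify a uniform containment: for every $\zeta \in \bgamp$, one has $\Phi(\zeta) \subset W(z_0)$ for some $z_0 \in Z$ with $\zeta \in V(z_0)$. Such a $z_0$ exists because the sets $V(z)$ cover $\bgamp$, and the coding procedure of Lemma~\ref{lem:codings_exist} then produces a strict $(\mc{G}, \rho_0)$-coding of $\zeta$ starting at $z_0$; Lemma~\ref{lem:coding_approximates_point} yields the containment. Since $\zeta \in W(z_0)$ and $\diam W(z_0) < \bigepsilon$ by property~\ref{item:smalldiam}, this upgrades to $\Phi(\zeta) \subset B_\bigepsilon(\zeta)$. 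The statement is uniform in $\zeta$ and applies equally to conical and parabolic points.

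For distinct $a, b \in \bgamp$, Definition~\ref{def:pair_separation_constant} provides $g \in \Gamma$ with $\dvis(\rho_0(g)a, \rho_0(g)b) > D$. Since $\bigepsilon < D/5$ by Definition~\ref{def:epsilon}, we have $D > 2\bigepsilon$, hence $B_\bigepsilon(\rho_0(g)a) \cap B_\bigepsilon(\rho_0(g)b) = \emptyset$, and the first step gives $\Phi(\rho_0(g)a) \cap \Phi(\rho_0(g)b) = \emptyset$. Equivariance (Lemma~\ref{lem:equivariant}) now yields $\rho(g)\bigl(\Phi(a) \cap \Phi(b)\bigr) = \Phi(\rho_0(g)a) \cap \Phi(\rho_0(g)b) = \emptyset$, and since $\rho(g)$ is a homeomorphism we conclude $\Phi(a) \cap \Phi(b) = \emptyset$.

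There is no real obstacle here: the substantive content has already been absorbed into the coding machinery and the careful control on the sets $W(z)$. The argument is just the clean combination of uniform contraction ($\Phi(\zeta)$ lies in a small neighborhood of $\zeta$) with equivariance, using that the convergence group action is cocompact on pairs so that any two distinct points can be spread apart by $\Gamma$.
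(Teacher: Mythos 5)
Your proof is correct and is essentially the same argument as the paper's: both translate $a$ and $b$ by a group element spreading them distance greater than $D$ apart, invoke Lemma~\ref{lem:coding_approximates_point} to trap each $\Phi(\rho_0(g)a)$, $\Phi(\rho_0(g)b)$ in a $W$-set of diameter less than $\bigepsilon < D/5$, and then pull back by equivariance. Your repackaging via $\Phi(\zeta)\subset B_\bigepsilon(\zeta)$ is a cosmetic variant of the paper's observation that the two $W$-sets are disjoint.
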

\begin{proof}
  Given $a \neq b \in \bgamp$, let $g \in \Gamma$ be such that
  $\dvis(\rho_0(g)a, \rho_0(g)b) > D$, where $D$ is the constant from
  Definition \ref{def:pair_separation_constant}.  Let $z_0$ and $y_0$
  be the initial vertices of strict $\mc{G}$-codings for $\rho_0(g)a$
  and $\rho_0(g)b$, respectively. From
  Lemma~\ref{lem:coding_approximates_point}, we have
  $\rho_0(g)a \in W(z_0)$ and $\rho_0(g)b \in W(y_0)$.  Since each set
  $W(z)$ has diameter strictly less than $D/4$, and
  $\dvis(\rho_0(g)a, \rho_0(g)b) > D$, we conclude that
  $W(z_0) \cap W(y_0) = \emptyset$.
  
  Applying Lemmas~\ref{lem:equivariant} and
  \ref{lem:coding_approximates_point}, we have
\[\Phi(\rho_0(g)a) = \rho(g)\Phi(a) \subset W(z_0)\] 
and similarly
  $\rho(g)\Phi(b) \subset W(y_0)$.  Since 
  $W(z_0) \cap W(y_0) = \emptyset$, we also have
  $\Phi(a) \cap \Phi(b) = \emptyset$.
\end{proof} 

The fact that every point in $\bgamp$ has a strict
$(\mc{G}, \rho)$-coding (Lemma~\ref{lem:perturbedcodingsexist})
ensures that the union of the sets $\Phi(z)$ for $z \in \bgamp$ is all
of $\bgamp$
and Lemma~\ref{lem:inverse_disjoint}
ensures that this union is in fact a partition. So, the following
definition makes sense and gives a surjective map.  

\begin{definition}
  Define $\phi:\bgamp \to \bgamp$ by taking
  $\phi(x) = y$ if $x \in \Phi(y)$.
\end{definition}

We know that $\phi$ is equivariant by Lemma~\ref{lem:equivariant}, so
to prove that it is a semi-conjugacy lying in our chosen neighborhood
$\mc{V} \subset C(\bgamp)$, we just need to prove that it is a
continuous map which is $\bigepsilon$-close to the identity (see Definition \ref{def:epsilon}).

\begin{lemma} \label{lem:close_to_id}
  For every $\zeta \in \bgamp$, we have
  $\dvis(\zeta, \phi(\zeta)) < \bigepsilon$.
\end{lemma}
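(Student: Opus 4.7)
The plan is to sandwich both $\zeta$ and its image $y := \phi(\zeta)$ inside a common set from the cover $\{W(z)\}_{z \in Z}$, which by property \ref{item:smalldiam} has diameter strictly less than $\bigepsilon$.

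First, I would unpack the definition of $\phi$: we have $\zeta \in \Phi(y)$ by construction. Next, I would apply Lemma \ref{lem:codings_exist} to the standard action $\rho_0$ to produce some strict $(\mc{G}, \rho_0)$-coding of $y$, and let $z_0 \in Z$ denote its initial vertex. Inspecting the proof of that lemma, the vertex $z_0$ is chosen so that $y \in V(z_0)$, and since $\overline{V(z_0)} \subset W(z_0)$ by property \ref{item:edgedefinition}, we have $y \in W(z_0)$. (In the parabolic case where the coding has length zero, $y = z_0$ is itself the initial vertex and $y \in V(z_0) \subset W(z_0)$ still holds.)

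For $\zeta$, the key input is Lemma \ref{lem:coding_approximates_point}, which immediately yields $\Phi(y) \subset W(z_0)$, and hence $\zeta \in W(z_0)$. Thus both $\zeta$ and $y$ lie in $W(z_0)$, and property \ref{item:smalldiam} gives $\dvis(\zeta, y) \le \diam W(z_0) < \bigepsilon$, as desired. The argument is essentially a direct packaging of Lemma \ref{lem:coding_approximates_point}, so there is no substantive obstacle here; all the technical work has already been carried out in the construction of the $W$-cover (especially the small-diameter property) and in the proof of Lemma \ref{lem:coding_approximates_point}, which itself relies on the perturbed edge-nesting condition \ref{item:perturbed_edge_nesting} built into the definition of ``same combinatorics.''
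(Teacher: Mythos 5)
Your argument is essentially identical to the paper's: both fix $\zeta$, set $y=\phi(\zeta)$, take a strict $(\mc{G},\rho_0)$-coding of $y$ with initial vertex $z_0$, use Lemma~\ref{lem:coding_approximates_point} to place $\Phi(y)$ (hence $\zeta$) inside $W(z_0)$, observe $y\in W(z_0)$, and conclude by \ref{item:smalldiam}. You spell out why $y\in W(z_0)$ a bit more carefully than the paper does, which is fine.
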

\begin{proof}
  Fix $\zeta \in \bgamp$ and let $\xi = \phi(\zeta)$, so
  $\zeta \in \Phi(\xi)$. Then by
  Lemma~\ref{lem:coding_approximates_point}, if $\coding{e}$ is a
  $(\mc{G}, \rho_0)$-coding for $\xi$ with initial vertex $z_0$, we
  have both $\xi \in W(z_0)$ and $\Phi(\xi) \subset W(z_0)$, hence
  $\zeta \in W(z_0)$. Since $\diam(W(z_0)) < \bigepsilon$ by property
  \ref{item:smalldiam}, the result follows.
\end{proof}

\begin{lemma} \label{lem:continuity} 
  The map $\phi$ is continuous.
\end{lemma}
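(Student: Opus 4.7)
The plan is to show that the graph of $\phi$ is closed in $\bgamp \times \bgamp$; since $\bgamp$ is compact Hausdorff, this suffices for continuity. Equivalently, I will show that whenever $\zeta_n \to \zeta$ and $\xi_n := \phi(\zeta_n) \to \xi'$, then $\zeta \in \Phi(\xi')$. The argument splits on whether $\xi'$ is conical or parabolic.

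\textbf{Conical case.} Fix a $(\mc{G}, \rho_0)$-coding of $\xi'$ produced by the algorithm in Lemma~\ref{lem:codings_exist}, with label sequence $(\beta_k)$ and terminal vertex sequence $(y_k)$. By construction, $\rho_0(\beta_1 \cdots \beta_k)^{-1}\xi' \in V(y_k)$ for each $k$, so $U_k := \rho_0(\beta_1 \cdots \beta_k) V(y_k)$ is an open neighborhood of $\xi'$. For any $y \in U_k$, the point $\rho_0(\beta_1 \cdots \beta_k)^{-1} y$ lies in $V(y_k)$, so it has a $(\mc{G}, \rho_0)$-coding starting at $y_k$; combining Lemma~\ref{lem:coding_approximates_point} with the equivariance of $\Phi$ (Lemma~\ref{lem:equivariant}) then yields $\Phi(y) \subset \rho(\beta_1 \cdots \beta_k)\overline{W(y_k)}$. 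Since $\xi_n \to \xi'$, eventually $\xi_n \in U_k$, so $\zeta_n \in \Phi(\xi_n) \subset \rho(\beta_1 \cdots \beta_k)\overline{W(y_k)}$; passing to the limit gives $\zeta \in \rho(\beta_1 \cdots \beta_k)\overline{W(y_k)}$ for every $k$, and hence $\zeta \in \Phi(\xi')$.

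\textbf{Parabolic case.} Write $\xi' = \rho_0(g)p$ with $p \in \Pi$. By equivariance of $\Phi$ we may translate the sequences to reduce to the case $\xi' = p$, where it suffices to show $\phi_p(\zeta) = p$. The key subclaim is: for any $\delta > 0$, there is an open neighborhood $U$ of $p$ with $\Phi(y) \subset \phi_p^{-1}(B_\delta(p))$ for every $y \in U$. Given the subclaim, since $\xi_n \to p$ we have $\phi_p(\zeta_n) \in \overline{B_\delta(p)}$ for $n$ large, so by continuity of $\phi_p$ we get $\phi_p(\zeta) \in \overline{B_\delta(p)}$; as $\delta$ was arbitrary, $\phi_p(\zeta) = p$.

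To prove the subclaim, pick $\eta < \delta$. By Remark~\ref{rem:finitely_many}, for each edge in $\mc{G}$ from $p$ to any vertex $z_1$, all but finitely many $\alpha \in L(p)$ satisfy $\rho_0(\alpha) N_\bigepsilon(W(z_1)) \subset B_\eta(p)$. Let $F \subset L(p)$ be the finite union of such exceptional labels over all edges out of $p$, and set $U := V(p) \setminus \bigcup_{\alpha \in F} \rho_0(\alpha)\overline{\hat{V}(p)}$; this is an open neighborhood of $p$ because $p \notin \overline{\hat{V}(p)}$. For $y \in U \setminus \{p\}$, we have $y \in \rho_0(\alpha)\hat{V}(p)$ for some $\alpha \in L(p) \setminus F$, so a $(\mc{G}, \rho_0)$-coding of $y$ starts at $p$ with first label $\alpha$ going to a vertex $z_1$, and Lemma~\ref{lem:coding_approximates_point} (together with iterated edge-nesting in the case of longer codings) gives $\Phi(y) \subset \rho(\alpha)\overline{W(z_1)}$. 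Since $\alpha \in \Gamma_p$, the relation $\rho(\alpha) = \phi_p^{-1}\rho_0(\alpha)\phi_p$ combined with $\phi_p(W(z_1)) \subset N_\bigepsilon(W(z_1))$ (arranged when we shrunk $\mc{V}'$) and $\alpha \notin F$ yields $\rho(\alpha)W(z_1) \subset \phi_p^{-1}(B_\eta(p))$, whence $\Phi(y) \subset \phi_p^{-1}(\overline{B_\eta(p)}) \subset \phi_p^{-1}(B_\delta(p))$. The case $y = p$ is immediate since $\Phi(p) = \phi_p^{-1}(p)$. The main technical obstacle is this parabolic case, where one must convert the $\rho_0$-shrinking of long elements of $L(p)$ toward $p$ into genuine $\rho$-shrinking, which requires carefully exploiting the $\Gamma_p$-equivariant semi-conjugacy $\phi_p$.
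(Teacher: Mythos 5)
Your approach is genuinely different from the paper's and substantially more involved. The paper proves continuity in a few lines by contradiction: if $\zeta_n\to\zeta$ but $\phi(\zeta_n)\to\xi\neq\phi(\zeta)$ along a subsequence, apply an element $g$ to normalize $\dvis(\xi,\phi(\zeta))>D$ (using the cocompactness constant $D$), then the triangle inequality together with Lemma~\ref{lem:close_to_id} (which bounds $\dvis(x,\phi(x))<\bigepsilon$ uniformly) yields $\dvis(\phi(\zeta_n),\phi(\zeta))<3\bigepsilon$ for large $n$, so $\dvis(\xi,\phi(\zeta_n))>D-3\bigepsilon>\bigepsilon$, contradicting convergence. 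No case split is needed. You instead verify the closed-graph criterion directly, splitting on whether the limit $\xi'$ of $\phi(\zeta_n)$ is conical or parabolic. Your conical case is correct.

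Your parabolic case has a genuine gap. You reduce to $\xi'=p\in\Pi$ and then treat $p$ as a vertex of the automaton: you invoke $V(p)$, $\hat V(p)$, $L(p)$, Remark~\ref{rem:finitely_many} for edges ``from $p$,'' and the existence of a $(\mc G,\rho_0)$-coding of $y$ with initial vertex $p$. All of this implicitly assumes $p\in Z$. But the paper never arranges that $\Pi\subset Z$: the set $Z$ is merely a finite subset such that $\{V(z)^\ast\}_{z\in Z}$ covers $\bgamp$, and while the coding construction shows every parabolic orbit meets $Z$ (since parabolic codings terminate at a parabolic vertex in $Z$), it does not ensure the particular representatives in $\Pi$ are chosen as vertices. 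The correct reduction is to a parabolic \emph{vertex} $z_q\in Z$ in the orbit of $\xi'$; then $L(z_q)\subset g\Gamma_p$ is a coset rather than a subgroup, and the key step converting $\rho_0$-shrinking into $\rho$-shrinking requires the coset semi-conjugacy identity $\phi_{z_q}(\rho(\alpha)x)=\rho_0(\alpha)\phi_p(x)$ of Lemma~\ref{lem:coset_semiconjugacy}, rather than the simpler $\Gamma_p$-equivariance $\phi_p\rho(\alpha)=\rho_0(\alpha)\phi_p$ you invoke. This is fixable but nontrivial, and the resulting argument is considerably longer than the paper's one-paragraph proof, which sidesteps the conical/parabolic dichotomy entirely by leveraging Lemma~\ref{lem:close_to_id}.
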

\begin{proof}
  Fix $\zeta \in \bgamp$. By the previous lemma we know that
  $\dvis(\zeta, \phi(\zeta)) < \bigepsilon$. To show continuity at
  $\zeta$ we will use equivariance and the convergence action of
  $\Gamma$ on $\bgamp$.  Suppose $\zeta_n \to \zeta$, but along some
  subsequence we have $\phi(\zeta_n) \to \xi \neq \phi(\zeta)$.  By
  equivariance of $\phi$, we may assume without loss of generality
  (after applying some $g \in \Gamma$) that
  $\dvis(\xi, \phi(\zeta)) > D$, where $D$ is the constant from Definition \ref{def:pair_separation_constant}.  On the other hand, by 
  Lemma \ref{lem:close_to_id} and the
  triangle inequality, we have 
    \[
    \dvis(\phi(\zeta_n), \phi(\zeta)) \le \dvis(\phi(\zeta_n), \zeta_n)
    + \dvis(\zeta_n, \zeta) + \dvis(\zeta, \phi(\zeta)) < 3\bigepsilon
  \] 
  provided that $n$ is sufficiently large. Then
  $\dvis(\xi, \phi(\zeta_n)) > D - 3\bigepsilon > \bigepsilon$ (recall
  from Definition~\ref{def:epsilon} that $\bigepsilon \le D/5$). This
  gives a contradiction.
\end{proof} 

This completes the demonstration that $\phi$ is a semi-conjugacy
satisfying the properties of Theorem \ref{thm:main}, and concludes the
proof.

\appendix

\section{Uniform nesting for automata}
\label{sec:appendix}

The purpose of this appendix is to prove a \emph{uniform nesting
  property} for finite-state automata which ``code points'' with
respect to a $\Gamma$-action on some Hausdorff space $M$. Special
cases of this result were originally stated as Lemma~3.8 and
Corollary~3.11 in \cite{MMW1}. We use essentially the same proof as in
that paper to show the general statement (Lemma~\ref{lem:uniformnest}
below).

We start with some general set-up. Let $G$ be any group acting by
homeomorphisms on a Hausdorff space $M$. As in
Section~\ref{sec:automaton_prop} of this paper, for any edge $e$ in a
directed graph, we let $\head(e)$ and $\tail(e)$ respectively denote
the initial and terminal vertices of $e$.

\begin{definition}
  A \emph{finitary point coder} $\mc{Q}$ for the action $G\acts M$ 
  consists of
  \begin{enumerate}
  \item A finite collection  $\mc{W}(\mc{Q})$ of open sets of $M$.
  \item A finite collection $F(\mc{Q})$ of elements of $G$.
  \item A finite directed graph, with each vertex $v$ labeled by an
    open set $W(v) \in \mc{W}(\mc{Q})$, and each edge $e$ labeled by
    an element $\lab(e) \in F(\mc{Q})$, satisfying the following
    conditions:
    \begin{enumerate}
    \item Whenever there is an edge from $z_1$ to $z_2$ labeled by
      $\alpha$, there is an inclusion
      $\alpha\overline{W(z_2)} \subset W(z_1)$.
    \item\label{subitm:point} Whenever $\coding{e} = (e_k)_{k \in \bN}$ is an
      infinite edge path with terminal vertex sequence
      $\tail(e_k) = z_k$ and edge labels $\alpha_k = \lab(e_k)$, the
      sequence of sets
      $\left(\alpha_1 \cdots \alpha_n W(z_n)\right)_{n \in \bN}$ is a
      system of neighborhoods for a point $p \in M$.  Such an edge
      path is called a \emph{strict $\mc{Q}$--coding} of the point
      $p$.
    \end{enumerate}
  \end{enumerate}
\end{definition}

If $\coding{e} = (e_k)_{k \in \bN}$ is a strict $\mc{Q}$-coding of $p$
with labels $\lab(e_k) = \alpha_k$ and terminal vertices
$v_k = \tail(e_k)$, it follows immediately that the intersection
\[
  \bigcap_{n \in \bN} \alpha_1 \cdots \alpha_n W(v_n)
\]
is equal to $\{p\}$. If $M$ is metrizable and every set
$W \in \mc{W}(\mc{Q})$ has compact closure (which is the case for all
of our applications), then the converse also holds: if
$\coding{e} = (e_k)_{k \in \bN}$ is an edge path with label sequence
$(\alpha_k)_{k \in \bN}$, and
$\bigcap_{n \in \bN} \alpha_1 \cdots \alpha_n W(\tail(e_n)) = \{p\}$,
then $\coding{e}$ is a $\mc{Q}$-coding of $p$.

\begin{definition} Let $\mc{Q}$ be a finitary point coder. A
  \emph{generalized $\mc{Q}$--coding} is a pair $(g_0, \coding{e})$,
  where $g_0 \in G$ and $\coding{e}$ is a strict $\mc{Q}$-coding. If
  $\coding{e}$ is a strict coding of $p \in M$, then we say that the
  generalized coding $(g_0, \coding{e})$ codes the point $g_0p \in M$.

  The label sequence and initial/terminal vertex sequences of a
  generalized coding $(g_0, \coding{e})$ are defined to be the same as
  the corresponding sequences for the strict coding $\coding{e}$. If
  $(\alpha_k)_{k \in \bN}$ is the label sequence for a generalized
  coding $(g_0, \coding{e})$, then the \emph{path sequence} associated to the coding is the sequence
  $(g_k)_{k \in \bN \cup \{0\}}$ in $G$ defined by
  \[
    g_k := g_0 \cdot \alpha_1 \cdots \alpha_k.
  \]
  
\end{definition}

We prove the following:

\begin{lemma}[Uniform nesting for finitary point coders]
  \label{lem:uniformnest}
  For any finite subset $F \subset G$ and any finitary point coders
  $\mc{Q}, \mc{Q'}$, there is a number $N = N(\mc{Q}, \mc{Q'}, F)$
  satisfying the following. Suppose that $(g_0, \coding{c})$ is a
  generalized $\mc{Q'}$--coding of $p$ with path sequence
  $(g_k)_{k \in \bN \cup \{0\}}$ and terminal vertex sequence
  $(z_k)_{k \in \bN}$, and $(h_0, \coding{d})$ is a generalized
  $\mc{Q}$--coding of $p$ with path sequence 
  $(h_k)_{k \in \bN \cup \{0\}}$ and terminal vertex sequence
  $(y_k)_{k \in \bN}$. Then, for any indices $m, n \in \bN$ satisfying
  $g_n^{-1}h_m \in F$, we have
  \begin{equation}\label{eq:nestingcond}
    g_{n+N} \overline{W(z_{n+N})} \subset h_{m} W(y_{m}).
  \end{equation}
\end{lemma}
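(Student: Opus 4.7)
The plan is a contradiction argument that combines the finiteness of $F$ together with the finiteness of the vertex and edge sets of $\mc{Q}$ and $\mc{Q'}$, and exploits the fact that the space of infinite edge paths from a fixed vertex in a \emph{finite} directed graph is compact in the product topology.

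The first move I would make is to rewrite the target inclusion~\eqref{eq:nestingcond} in a form where only finitely many ``labels'' appear. Since $g_n^{-1}h_m = f \in F$, translating by $g_n^{-1}$ shows that~\eqref{eq:nestingcond} is equivalent to
\[
\alpha_{n+1} \cdots \alpha_{n+N}\, \overline{W(z_{n+N})} \subset f\, W(y_m).
\]
Suppose for contradiction that no uniform $N$ exists. Then for each $N \in \bN$ there is a counterexample: generalized codings $(g_0^{(N)}, \coding{c}^{(N)})$ and $(h_0^{(N)}, \coding{d}^{(N)})$ of a common point $p^{(N)}$, indices $n_N, m_N$, and $f^{(N)} \in F$ for which the above inclusion fails at level $N$. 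Using finiteness of $F$ and of the vertex sets of $\mc{Q}, \mc{Q'}$, I would pass to a subsequence on which $f^{(N)} = f$ is constant, the starting vertex $\head(e^{(N)}_{n_N+1}) = z \in V(\mc{Q'})$ of the shifted $\coding{c}$-coding is constant, and the starting vertex $\head(f^{(N)}_{m_N+1}) = y \in V(\mc{Q})$ of the shifted $\coding{d}$-coding is constant. A standard diagonal extraction, using that each vertex has only finitely many outgoing edges, yields infinite edge paths $\coding{e}^\infty$ (in $\mc{Q'}$, starting at $z$) and $\coding{e'}^\infty$ (in $\mc{Q}$, starting at $y$) such that, for every $k$, the first $k$ edges of the shifted codings eventually coincide with those of $\coding{e}^\infty$ and $\coding{e'}^\infty$. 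Let $q^\infty$ and $q'^\infty$ denote the points coded by $\coding{e}^\infty$ and $\coding{e'}^\infty$ via Condition~\ref{subitm:point}.

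I would then derive two contradictory statements about $q^\infty$. For the first, I use that the strict nesting $\alpha\overline{W(z_2)} \subset W(z_1)$ implies that the closures $\alpha_1^\infty \cdots \alpha_k^\infty\overline{W(z_k^\infty)}$ are nested \emph{inside} the open sets $\alpha_1^\infty \cdots \alpha_{k-1}^\infty W(z_{k-1}^\infty)$, so they also form a neighborhood basis for $q^\infty$. For each fixed $k$, picking $N \ge k$ far enough out in the subsequence, the first $k$ edges of $\coding{c}^{(N)}_{\mathrm{sh}}$ agree with those of $\coding{e}^\infty$, and the length-$N$ closure appearing in the counterexample, which contains a point outside $fW(y)$, is contained in $\alpha_1^\infty \cdots \alpha_k^\infty\overline{W(z_k^\infty)}$. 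Hence none of these basis closures lies inside the open set $fW(y)$, which forces $q^\infty \notin fW(y)$. For the second statement, let $a^{(N)} = (g^{(N)}_{n_N})^{-1}p^{(N)}$ and $b^{(N)} = (h^{(N)}_{m_N})^{-1}p^{(N)}$, so that $a^{(N)} = f\, b^{(N)}$ by definition of $f$. The point $a^{(N)}$ is coded by $\coding{c}^{(N)}_{\mathrm{sh}}$ and $b^{(N)}$ by $\coding{d}^{(N)}_{\mathrm{sh}}$; the convergence of these codings to $\coding{e}^\infty$ and $\coding{e'}^\infty$ gives $a^{(N)} \to q^\infty$ and $b^{(N)} \to q'^\infty$, and continuity of the $f$-action produces $q^\infty = f\, q'^\infty$. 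Since $q'^\infty \in W(y)$ (the coding $\coding{e'}^\infty$ starts at $y$), we conclude $q^\infty \in fW(y)$, the desired contradiction.

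The main subtlety, as opposed to genuine difficulty, lies in the first of the two statements: extracting the conclusion $q^\infty \notin fW(y)$ from the hypothesis requires that not just the open basis sets but also their closures $\alpha_1^\infty \cdots \alpha_k^\infty\overline{W(z_k^\infty)}$ form a basis shrinking to $q^\infty$, which is exactly what the strict inclusion $\alpha\overline{W(z_2)} \subset W(z_1)$ in the definition of a finitary point coder is designed to provide.
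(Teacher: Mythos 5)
Your proof is correct and follows the same broad compactness/contradiction strategy as the paper: assume a sequence of counterexamples, pass to a subsequence with $f$ and the relevant vertices constant, extract a limiting edge path, and derive contradictory membership statements for the limit point. The only substantive difference is in how the ``limit point lies inside $fW(y)$'' half of the contradiction is obtained. The paper takes a limiting coding only in $\mc{Q}'$ and, instead of a second limit, passes to a subsequence where the one-step set $U' := (h_{m_N})^{-1}h_{m_N+1}W(y_{m_N+1})$ is constant, then uses $(g_{n_N})^{-1}p^{(N)} \in fU'$ together with $\overline{U'} \subset U$ to place the limit point in $fU$. You instead extract a second limiting coding $\coding{e'}^\infty$ in $\mc{Q}$, identify its coded point $q'^\infty \in W(y)$, and invoke continuity of the $f$-action to conclude $q^\infty = f q'^\infty \in fW(y)$. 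Both are valid; your version is a bit more symmetric (at the cost of a second diagonalization), while the paper's is slightly leaner since it only needs the first outgoing edge of the shifted $\mc{Q}$-coding rather than a full limit path. One small point worth making explicit if you write this up: the strict inclusion $\alpha\overline{W(z_2)}\subset W(z_1)$ gives $C_{k+1}\subset O_k$ (closed basis set inside the previous open one), which is precisely what lets you pass from ``each $C_{k+1}$ contains a point outside $fW(y)$'' to ``$q^\infty\notin fW(y)$.'' You allude to this, but it is the crux of that half of the argument and deserves a full sentence.
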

\begin{proof}
  Fix a finite set $F \subset G$ and finitary point coders $\mc{Q}$,
  $\mc{Q'}$. The proof is by contradiction, so we therefore assume we
  have a sequence of natural numbers $N$ tending to $\infty$ so that
  for each $N$ in the sequence, there is a point $p^{N}\in M$, a
  generalized $\mc{Q'}$--coding $(g_0^{(N)}, \coding{c}^{(N)})$ of
  $p^{(N)}$ with path sequence
  $(g_k^{(N)})_{k \in \bN \cup \{0\}}$ and terminal vertex sequence
  $(z_k^{(N)})_{k \in \bN}$, a generalized $\mc{Q}$--coding
  $(h_0^{(N)}, \coding{d}^{(N)})$ of $p^{(N)}$ with path sequence
   $(h_k^{(N)})_{k \in \bN \cup \{0\}}$ and terminal vertex
  sequence $(y_k^{(N)})_{k \in \bN}$, and indices $m_N, n_N \in \bN$
  so that $(g^{(N)}_{n_N})^{-1} h^{(N)}_{m_N} \in F$, but the
  inclusion \eqref{eq:nestingcond} fails. That is, for each $N$, we
  have
  \begin{equation}
    \label{eq:notsubset}
    g^{(N)}_{n_N+N} \overline{W(z_{n_N+N}^{(N)})}
    \not\subset h^{(N)}_{m_N} W(y_{m_N}^{(N)}).
  \end{equation}
  
  We immediately pass to a subsequence so that
  \begin{equation}\tag{f}
    \label{cond:f}
  \mbox{   $ (g_{n_N}^{(N)})^{-1} h_{m_N}^{(N)}$ is constant, equal to $f\in F$. }
  \end{equation}
  We further refine our subsequence so the following three conditions are satisfied.
\begin{align}
  \tag{z} & \mbox{ The sets $W(z_{n_N+N}^{(N)})$ are constant, equal to
            some $W \in \mc{W}(\mc{Q'})$.}\\
  \tag{y} & \mbox{ The sets $W(y_{m_N}^{(N)})$ are constant, equal to
            some $U \in \mc{W}(\mc{Q})$. }\\
  \tag{$\ast$}\label{cond:W} & \mbox{ The sets $(h_{m_N}^{(N)})^{-1}h_{m_N + 1}^{(N)} W(y_{m_N + 1}^{(N)})$ are constant, equal to some $U'$.}
  \end{align}
  The first two are possible because the sets $\mc{W}(\mc{Q'})$ and
  $\mc{W}(\mc{Q})$ are finite; the third also uses the fact that each
  $\mathbf{d}^{(N)}$ is a (strict) $\mc{Q}$--coding, so the elements
  $(h_{m_N}^{(N)})^{-1}h_{m_N + 1}^{(N)} $ lie in the finite set
  $F(\mc{Q})$.  A key property we will use at the end of the proof is
  that
  \begin{equation}\label{eq:UprimeinU}
    \overline{U}'\subset U.
  \end{equation}

  The fact that $\mathbf{c}^{(N)}$ is a $\mc{Q'}$--coding implies that
  for each $k\ge 1$, we have $g_k^{(N)} = g_{k-1}^{(N)}\alpha$ for
  some $\alpha$ chosen from the finite set $F(\mc{Q'})$.  For $N$ in
  our subsequence, we can multiply each side of~\eqref{eq:notsubset}
  on the left by $(g^{(N)}_{n_N})^{-1}$ and apply condition
  \eqref{cond:f} to obtain
  \begin{equation}
    \label{eq:simple}
    \alpha_1^{(N)}\cdots\alpha_N^{(N)} \overline{W} \not\subset f U,
  \end{equation}
  where $\alpha_k^{(N)} := (g_{n_N+k-1}^{(N)})^{-1}g_{n_N + k}^{(N)}$
  is the label of an edge $e_k^{(N)}$ in $\mc{Q'}$.

  For each $N$ we consider the infinite edge path $\gamma^{(N)}$ in
  $\mc{Q'}$ given by
  \[
    \gamma^{(N)} := (e_1^{(N)}, e_2^{(N)}, \ldots)
  \]
  We note that $\gamma^{(N)}$ is a strict $\mc{Q'}$--coding, coding the
  point $(g^{(N)}_{n_N})^{-1}p^N$.

  Passing to a subsequence $\{N(j)\}_{j\in \bN}$ one final time, we
  obtain a sequence of strict $\mc{Q'}$--codings
  $\{\gamma^{(N(j))}\}_{j\in \bN}$ so that for all $l\ge j$, the
  initial subsegment of length $j$ of the $\mc{Q'}$-coding
  $\gamma^{N(l)}$ is independent of $l$. In particular this
  subsequence of codings converges to a strict $\mc{Q'}$--coding
  \[ \gamma^\infty = (e_1^\infty, e_2^\infty, \ldots ) \] with edge
  labels $\alpha_k^\infty := \lab(e_k^\infty)$. By property
  \eqref{subitm:point} of $\mc{Q'}$, this coding determines a unique
  point $p^\infty \in M$.

For our subsequence $N(j)$, the non-containment in \eqref{eq:simple} takes the form
\begin{equation*}
 \alpha_{1}^{(N(j))}\cdots\alpha_{N(j)}^{(N(j))} \overline{W} \not\subset f U.
\end{equation*}
We may assume that $N(j) > j$ for all $j$, so we can rewrite this as
\begin{equation*}
  \left(\alpha_1^\infty\cdots\alpha_j^\infty\right)\left(\alpha_{j+1}^{(N(j))}\cdots \alpha_{N(j)}^{(N(j))}\right) \overline{W} \not \subset f U.
\end{equation*}
 By the nesting property of codings we have
 \begin{equation*}
   \left(\alpha_1^\infty\cdots\alpha_j^\infty\right)\left(\alpha_{j+1}^{(N(j))}\cdots \alpha_{N(j)}^{(N(j))}\right) \overline{W} \subset \left(\alpha_1^\infty\cdots\alpha_j^\infty\right) W^\infty_j,
   \end{equation*}
   so we must therefore have
\begin{equation}\label{eq:pinftynbhds}
     \left(\alpha_1^\infty\cdots\alpha_j^\infty\right) W^\infty_j \not\subset fU.
   \end{equation}
   Since $\gamma^\infty$ is a coding for $p^\infty$, the sets on the
   left hand side of~\eqref{eq:pinftynbhds} give a nested basis of
   neighborhoods of $p^\infty$ and so we conclude
   \begin{equation}
     \label{eq:notinrhs}
     p^\infty\not\in fU.
   \end{equation}

On the other hand, for each $N$,  $\gamma^{(N)}$ is a coding for $(g^{(N)}_{n_N})^{-1}p^N$.  Thus for each $j$ we have
\begin{align*}
  (g^{(N(j))}_{n_{N(j)}})^{-1} p^{N(j)}
  \in & \alpha_1^{(N(j))}\cdots \alpha_j^{(N(j))}W^{(N(j))}_j \\
  & = \alpha_1^\infty\cdots \alpha_j^\infty W^\infty_j.
\end{align*}
As before this last sequence of sets is a nested neighborhood basis for $p^\infty$ and
thus
\begin{equation*}
  \lim_{j\to\infty} (g^{(N(j))}_{n_{N(j)}})^{-1} p^{N(j)} = p^{\infty}.
\end{equation*}
We also know that, for any $N$,
\begin{equation}\label{eq:gnpn}
  (g^{(N)}_{n_N})^{-1} p^{N}\in (g^{(N)}_{n_N})^{-1} h^{(N)}_{m_N+1}\overline{W(y_{m_N + 1}^{(N)})},
\end{equation}
since $\mathbf{d}^{(N)}$ also codes $p^N$.
By our assumptions
\eqref{cond:f} and \eqref{cond:W} on our chosen subsequence, the
right-hand side of~\eqref{eq:gnpn} is always equal to a constant
$fU'$.  
   But we have just seen that a subsequence of the left-hand side
converges to $p^\infty$, so we must have $p^{\infty}\in f\overline{U'}$.  Because of~\eqref{eq:UprimeinU} this implies
\begin{equation*}
  p^{\infty}\in f U,
\end{equation*}
contradicting~\eqref{eq:notinrhs}.
\end{proof}

\bibliography{../refs.bib}
\bibliographystyle{shortalpha}
\end{document}